\numberwithin{paragraph}{section}
\setlist[enumerate]{label=\it{(\roman*)},
	ref=\it{(\roman*)}}
\newcommand{\Z}{{\mathbb Z}}
\renewcommand{\Im}{\mathrm{Im}}
\newcommand{\Frac}{\mathrm{Frac}}
\newcommand{\Spec}{\mathrm{Spec}}
\newcommand{\Supp}{\mathrm{Supp}}
\newcommand{\CH}{\mathrm{CH}}
\newcommand{\Div}{\mathrm{Div}}
\newcommand{\CaCl}{\mathrm{CaCl}}
\newcommand{\Pic}{\mathrm{Pic}}
\newcommand{\OO}{\mathcal{O}}
\newcommand{\Cl}{\mathrm{Cl}}
\newcommand{\id}{\operatorname{id}\nolimits}
\newcommand{\Ob}{\operatorname{Ob}\nolimits}
\newcommand{\Hom}{\operatorname{Hom}\nolimits}
\renewcommand{\Im}{\operatorname{Im}\nolimits}
\newcommand{\Ker}{\operatorname{Ker}\nolimits}
\renewcommand{\dim}{\operatorname{dim}\nolimits}
\newcommand{\codim}{\operatorname{codim}\nolimits}
\newcommand{\zar}{\text{zar}}
\newcommand{\an}{\mathrm{an}}
\newcommand{\Cor}{\mathrm{Cor}}
\newtheorem{theorem}{Theorem}[section]
\newtheorem{corollary}[theorem]{Corollary}
\newtheorem{lemma}[theorem]{Lemma}
\newtheorem{lemma*}{Lemma}
\newtheorem{proposition}[theorem]{Proposition}
\theoremstyle{definition}
\newtheorem{definition}[theorem]{Definition}
\newtheorem{proposition&definition}[theorem]{Proposition\&Definition}
\newtheorem{lemma&definition}[theorem]{Lemma\&Definition}
\newtheorem{theorem&definition}[theorem]{Theorem\&Definition}
\newtheorem{example}[theorem]{Example}
\newtheorem{example*}{Example}
\newtheorem{remark}[theorem]{Remark}
\newtheorem{question*}{Question}
\newtheorem{art}[theorem]{}
\begin{document}
	
	\title[Intersection theory on non-archimedean analytic spaces]{Intersection theory on non-archimedean analytic spaces}

	\author[Y.~Cai]{Yulin Cai}
	\address{Westlake University, 
		Dunyu Road 600, Xihu District
		310024, Hangzhou, China}
	\email{ylcai5388339@gmail.com}

	\begin{abstract}
	We develop the intersection theory of non-archimedean analytic spaces and prove the projection formula and the GAGA principle. As an application, we naturally define the category of finite correspondences of analytic spaces.
	\end{abstract}
	
\keywords{Berkovich analytic spaces, cycles, intersection numbers, finite correspondences} 
\subjclass{{Primary 14G22; Secondary 14A99}}

\maketitle

\setcounter{tocdepth}{1}

\tableofcontents

	
	\section{Introduction}
	
	The intersection theory of non-archimedean analytic spaces has been studied in \cite[Section~2]{gubler1998local} and \cite[Section~2.2]{ayoub2015motifs}, and the author believes that some experts have concrete idea about such a theory. 
	
	In \cite{gubler1998local}, Gubler considers the Cartier divisors on rigid analytic spaces and formal schemes, and define their intersection with irreducible analytic subsets. This theory allows him to define the local height of subvarieties over non-archimedean fields.   
	
	In \cite{ayoub2015motifs}, Ayoub develops the theory of motives on rigid analytic spaces using homotopy theory. He uses the presheaves on the category of affinoid spaces to construct the category of finite correspondence (for rigid analytic space) $\mathbf{RigCor}(K)$. Such construction avoids the intersection theory of analytic spaces.

	In this paper, we develop the intersection theory of non-archimedean analytic spaces following the idea similar to the case of algebraic varieties. We will show the flat base change formula, the projection formula and the GAGA principle to relate the intersection theories of analytic spaces and of algebraic varieties. As an application, we give a direct construction of $\mathbf{RigCor}(K)$ like \cite[Lecture~1]{mazza2006lecture} does, see \cref{equivalence of category of finite correspondences} for precise statement. In fact, we can define the higher Chow groups of analytic spaces as \cite{bloch1986algebraic} for algebraic varieties, and this definition is different from Ayoub's in \cite[Introduction g\'en\'erale]{ayoub2015motifs}.
	
	In \cref{preliminary}, we give some basic notion in the theory of Berkovich spaces, e.g. support of a coherent sheaf, Zariski image and codimension. We also extend \cite[Proposition~4.12]{ducros2009les} into an abstract form, i.e. \cref{key lemma for a local property to be global on irreducible subsets} which is a key lemma for this paper. With this lemma, we can solve the compatibility problems in our theory, e.g. see \cref{multiplicity of a coherent sheaf} and \cref{multiplicities of proper intersection}.
	
	In \cref{Meromorphic functions and Cartier divisors}, we define and study the Cartier divisors on an analytic space $X$, which form a group $\Div(X)$. The group of divisors up to linear equivalence is denoted by $\CaCl(X)$. As in the theory of schemes, we have an injective homomorphism $\CaCl(X)\hookrightarrow \Pic(X)$, and it is an isomorphism if $X$ is affinoid and reduced.
	
	In \cref{Cycles}, we give the notion of cycles, and associate a coherent sheaf with a cycle.
In particular, we can associate a closed subspace with a cycle. 
As in the theory of algebraic varieties, the flat pull-backs and proper push-forwards of cycles are defined. We prove the following flat base change formula. 
\begin{proposition}[\cref{prop:flatbasechangeofcycles}]
	Let 
	\[\xymatrix{Y'\ar[r]^{g'}\ar[d]_{f'}& Y \ar[d]^{f}\\
		X'\ar[r]_{g}& X}\]
	be a Cartesian diagram of separated, strictly $K$-analytic spaces. Assume that $f$ is proper, and that $g$ is flat, has of relative dimension $r$. Then $f'$ is proper, $g'$ is flat and has relative dimension $r$. Moreover, $g^*\circ f_*= f'_*\circ g'^*$ on $Z^*(Y)$.
\end{proposition}
	
	In \cref{properintersection}, we define intersection product of proper intersection. We will give two definitions, meaning a local one using the scheme theory and a global one using $\mathrm{Tor}$ formula. 
	For a flat morphism $f\colon Y\to X$ of $K$-analytic spaces of pure dimension, the pull-back $f^*\colon Z^*(X)\to Z^*(Y)$ preserves intersection product.

Since we have the flat pull-backs, proper push-forwards and intersection products, the expected projection formula is proved in \cref{projectionformula}.
\begin{theorem}[Projection formula]
	Let $f\colon Y\to X$ be a flat, proper morphism of regular, separated, strictly $K$-analytic spaces. Let $\alpha\in Z^*(Y)$ and $\beta\in Z^*(X)$. Assume that $\alpha$ and $f^*\beta$ intersect properly. Then $f_*(\alpha)$ and $\beta$ intersect properly and 
	\[f_*(\alpha)\cdot\beta=f_*(\alpha\cdot f^*\beta).\]
\end{theorem}

In \cref{GAGA}, we compare the intersection theories of algebraic varieties and of non-archimedean analytic spaces. We prove the GAGA principle, i.e. \cref{prop:GAGA}.

In \cref{The category of finite correspondences}, we define the category of finite correspondence $\mathrm{Cor}_K$. This category is also defined by Ayoub \cite{ayoub2015motifs} using another definition.  

\subsection*{Notation and terminology}

Throughout this paper, we fix a complete non-archimedean field $K$ with a non-trivial valuation. For a $K$-analytic space, we mean a Berkovich space over $K$, see \cite[Definition~1.2.3]{berkovich1993etale}. The structure sheaf on a $K$-analytic space $X$ with respect to the G-topology  is denoted by $\OO_X$. If it is necessary, we will use the notation $X_G$ for the G-topology instead of the ordinary topology on $X$. The ($K$-analytic) dimension of $X$ is denoted by $\dim_KX$, or $\dim X$ when there is no confusion with the fields. 

Given a point $x\in X$, $\mathscr{H}(x)$ denotes its complete residue field and $\dim_xX$ denotes the local dimension of $X$ at $x$. 

We shall simply say "coherent sheaf on $X$" for "coherent $\OO_X$-module (with respect to G-topology)", and denote $\mathcal{C}oh(X)$ the category of coherent sheaves on $X$. We also denote $\Pic(X)$ the group of isomorphism classes of invertible sheaves on $X$. Assume that $X$ is good, let $\mathcal{F}$ be a coherent sheaf on $X$ and $x\in X$. We denote by $\mathcal{F}_x$ the stalk at $x$ of $\mathcal{F}$ viewed as a sheaf of the underlying ordinary topology of $X$, i.e.
\[\mathcal{F}_x\coloneqq\lim\limits_{\substack{\longrightarrow \\ U}}\mathcal{F}(U)=\lim\limits_{\substack{\longrightarrow \\ V}}\mathcal{F}(V).\]
where $U$ runs through open neighborhoods of $x$, and $V$ runs through affinoid neighborhoods of $x$.

We will write ${\mathrm{Irr}(X)}$ for the set of all irreducible components of $X$, see \cite[\S~4.3.2]{ducros2009les}, and write $\overline{\mathrm{Irr}(X)}$ for the set of all irreducible Zariski-closed subsets of $X$. Notice that $\overline{\mathrm{Irr}(X)}$ has a partial order: $W\leq Z$ if $W\subset Z$.


By an algebraic variety over $K$, we mean a separated scheme of finite type over $K$. 

For a commutative ring $A$, $R(A)$ denotes the set of all regular elements of $A$ and $\Frac(A)=R(A)^{-1}A$, the maximal localization containing $A$ as a subring. For an $A$-module $M$, we denote $\mathrm{Ann}(M)\coloneq\{a\in A\mid a\cdot M=0\}$.
\section{Preliminary}

\label{preliminary}

For the convenience of the reader and further uses, in the section, we provide some basic concepts and results that are either given somewhere, or formulated easily. 

In this section, we fix a $K$-analytic space $X$.

\subsection{Support of a coherent sheaf}

(cf. \cite[Section~2.5]{ducros2018families})

\begin{definition}
	Let $\mathcal{F}$ be a coherent sheaf on $X$, and $\mathrm{Ann}(\mathcal{F})$ the (coherent) annihilator ideal of $\mathcal{F}$ (on the site $X_G$). The \emph{support  of $\mathcal{F}$} is the closed analytic subspace of $X$ defined by $\mathrm{Ann}(\mathcal{F})$, denoted by $\Supp({\mathcal{F}})$. 
\end{definition}
\begin{remark}
Recall the annihilator $\mathrm{Ann}(\mathcal{F})$ of $\mathcal{F}$ is defined as follows: for any analytic domain $V$,
		\[\mathrm{Ann}(\mathcal{F})(V)\coloneqq\{a\in \OO_X(V)\mid a\cdot\mathcal{F}(V)=0\},\]
		which is a coherent ideal. In particular, for any analytic domain $V$, we have $\mathrm{Ann}(\mathcal{F})|_V = \mathrm{Ann}(\mathcal{F}|_V)$. 
\end{remark}
\begin{remark}
If $X=\mathcal{M}(A)$ is affinoid and $\mathcal{F}=\widetilde{M}$, the coherent sheaf associated to some finitely generated $A$-module $M$, then it is easy to see that
		\[\mathrm{Ann}(\mathcal{F}) = \widetilde{\mathrm{Ann}(M)}.\]
\end{remark}

From the definition, we can easy deduce the following lemma.

\begin{lemma}	\label{lemma:coherentsheafonsupport}
	Let $\mathcal{F}$ be a coherent sheaf on $X$, and $Z=\Supp(\mathcal{F})$. Then there is a unique coherent sheaf $\mathcal{G}$ on $Z$ such that $\mathcal{F}=i_*\mathcal{G}$, where $i\colon Z\hookrightarrow X$ is the canonical immersion.
\end{lemma}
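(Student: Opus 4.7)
The plan is to construct $\mathcal{G}$ locally on affinoid domains and then glue, using the fact that $\mathcal{F}$ is naturally a module over $\OO_X/\mathrm{Ann}(\mathcal{F}) = i_*\OO_Z$.

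First I would work affinoid-locally. For an affinoid domain $V=\mathcal{M}(A)\subset X$ with $\mathcal{F}|_V = \widetilde{M}$ for some finitely generated $A$-module $M$, the preceding remark gives $\mathrm{Ann}(\mathcal{F})|_V = \widetilde{\mathrm{Ann}(M)}$, so $V\cap Z = \mathcal{M}(A/I)$ with $I := \mathrm{Ann}(M)$. Since $IM=0$, the module $M$ carries a canonical $A/I$-module structure, under which it is again finitely generated. This gives a coherent sheaf $\widetilde{M}_Z$ on $V\cap Z$, and the standard identification of modules over a quotient with modules over the ring killed by the ideal shows $i_*\widetilde{M}_Z = \widetilde{M} = \mathcal{F}|_V$ as $\OO_V$-modules.

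Next I would glue these local constructions. If $V'\subset V$ is a smaller affinoid, then $\mathcal{F}|_{V'}$ corresponds to $M\otimes_A\OO_X(V')$, which is naturally a module over $\OO_X(V')/I\cdot\OO_X(V') = \OO_Z(V'\cap Z)$ (using that $\mathrm{Ann}$ commutes with restriction to analytic domains, per the remark); compatibility of the $A/I$-structure with localization then follows from the universal property of the quotient. Hence the locally defined sheaves $\widetilde{M}_Z$ glue to a well-defined coherent sheaf $\mathcal{G}$ on $Z_G$, and the local identifications $i_*\widetilde{M}_Z = \mathcal{F}|_V$ assemble to the required isomorphism $i_*\mathcal{G}\cong\mathcal{F}$.

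For uniqueness, suppose $\mathcal{G}'$ is another coherent sheaf on $Z$ with $i_*\mathcal{G}'\cong\mathcal{F}$. Restricting to an affinoid $V$ as above, $\mathcal{G}'|_{V\cap Z}$ corresponds to some finitely generated $A/I$-module $M'$ together with an isomorphism $M'\cong M$ of $A$-modules (under restriction of scalars along $A\twoheadrightarrow A/I$). Because the $A$-module structure on $M$ factors through $A/I$ in at most one way, this isomorphism is automatically $A/I$-linear, giving $\mathcal{G}'|_{V\cap Z}\cong\mathcal{G}|_{V\cap Z}$ canonically. These local isomorphisms are forced to agree on overlaps (they are the unique ones compatible with the fixed isomorphism $i_*\mathcal{G}'\cong i_*\mathcal{G}$), hence glue to a global isomorphism $\mathcal{G}'\cong\mathcal{G}$.

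The only mildly delicate point is the compatibility of the local $A/I$-module structures under refinement of affinoids — equivalently, the assertion $\mathrm{Ann}(\mathcal{F})|_V = \mathrm{Ann}(\mathcal{F}|_V)$ on any analytic domain — but this is already recorded in the remark following the definition of support, so there is no substantial obstacle. The rest is essentially bookkeeping with the closed immersion $i$ and the fact that $i_*$ is fully faithful on quasi-coherent sheaves on closed analytic subspaces.
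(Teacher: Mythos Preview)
Your proposal is correct and follows the same approach as the paper: reduce to the affinoid case via a uniqueness-plus-gluing argument, then handle the affinoid case by viewing $M$ as an $A/\mathrm{Ann}(M)$-module. The paper's proof is simply the two-sentence version of what you wrote out in detail.
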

\begin{proof}
	By uniqueness, we can glue coherent sheaf $\mathcal{G}$ from local parts, so we can assume that  $X=\mathcal{M}(A)$. It is not hard to see the lemma in this case.
\end{proof}

\subsection{Zariski image of a morphism}

\

As in the theory of schemes, we can define the Zariski image of a morphism of analytic spaces, which has a natural structure of analytic space. We follow the idea in \cite[Subsection~29.6]{stacks-project}.

\begin{lemma}	\label{lemma:largestcoherentsubmodule}
	Let $\mathcal{F}$ be a coherent sheaf on $X$, and $\mathcal{G}\subset \mathcal{F}$ an $\OO_X$-submodule. Then there is a unique coherent $\OO_X$-submodule $\mathcal{G}'\subset\mathcal{G}$ with the following property: for any coherent $\OO_X$-module $\mathcal{H}$, the canonical map
	\[\Hom_{\OO_X}(\mathcal{H},\mathcal{G}')\rightarrow \Hom_{\OO_X}(\mathcal{H},\mathcal{G})\]
	is bijective. In particular, $\mathcal{G}'$ is the largest coherent sheaf contained in $\mathcal{G}$.
\end{lemma}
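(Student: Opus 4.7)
The plan is to argue uniqueness first (which is automatic from the universal property), then construct $\mathcal{G}'$ locally on affinoid domains, and finally glue. The uniqueness clause tells us that if $\mathcal{G}'_1$ and $\mathcal{G}'_2$ are two coherent subsheaves of $\mathcal{G}$ both enjoying the stated property, then the inclusion $\mathcal{G}'_1 \hookrightarrow \mathcal{G}$ must factor through $\mathcal{G}'_2$ and vice versa, so $\mathcal{G}'_1 = \mathcal{G}'_2$ as subsheaves of $\mathcal{G}$. Consequently, any local constructions will automatically agree on overlaps, and it suffices to produce $\mathcal{G}'$ over an admissible affinoid cover $\{X_i\}$ of $X$.

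For the local construction, assume $X = \mathcal{M}(A)$ is affinoid, so that $A$ is Noetherian and $\mathcal{F} = \widetilde{M}$ for some finitely generated $A$-module $M$. Define
\[
N := \{\, m \in M \mid \text{for every affinoid domain } V \subset X,\ \text{the image of } m \text{ in } \mathcal{F}(V) \text{ lies in } \mathcal{G}(V)\,\}.
\]
This condition is preserved by $A$-linear combinations, so $N$ is an $A$-submodule of $M$; since $A$ is Noetherian and $M$ is finitely generated, $N$ is finitely generated, and I take $\mathcal{G}' := \widetilde{N}$. By construction $\mathcal{G}' \subset \mathcal{G}$ as subsheaves of $\mathcal{F}$. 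To check the universal property, let $\mathcal{H} = \widetilde{H}$ be a coherent sheaf with $H$ finitely generated over $A$ and $\varphi : \mathcal{H} \to \mathcal{G}$ an $\mathcal{O}_X$-linear map. The induced $A$-linear map $\varphi_X : H \to M$ has the property that for each $h \in H$ and each affinoid domain $V$, the image of $\varphi_X(h)$ in $\mathcal{F}(V)$ equals $\varphi_V(h \otimes 1) \in \mathcal{G}(V)$; therefore $\varphi_X(h) \in N$, and $\varphi$ factors uniquely through $\mathcal{G}' = \widetilde{N}$.

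Finally, I glue: the affinoid pieces $\mathcal{G}'_i \subset \mathcal{G}|_{X_i}$ have the local universal property, and by the uniqueness argument applied to any affinoid refinement of the overlaps $X_i \cap X_j$, the restrictions $\mathcal{G}'_i|_{X_i \cap X_j}$ and $\mathcal{G}'_j|_{X_i \cap X_j}$ coincide as subsheaves; hence they patch to a global coherent subsheaf $\mathcal{G}' \subset \mathcal{G}$ with the required property. The last assertion follows by taking $\mathcal{H}$ to be an arbitrary coherent subsheaf of $\mathcal{G}$ and factoring its inclusion through $\mathcal{G}'$. The only subtle point, and the main thing to verify carefully, is that the naive global description $N = \mathcal{G}(X) \cap M$ is too generous—sections in $\mathcal{G}(X)$ need not remain in $\mathcal{G}(V)$ after restriction to arbitrary analytic domains; this is why the definition of $N$ quantifies over all affinoid subdomains $V$, and it is this stronger condition that guarantees $\widetilde{N} \subset \mathcal{G}$ holds sheaf-theoretically and not merely on global sections.
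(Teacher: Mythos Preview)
Your local construction on an affinoid is correct, though it simplifies more than you think: since $\mathcal{G}$ is a \emph{subsheaf} of $\mathcal{F}$, any $m\in\mathcal{G}(X)\subset M$ automatically restricts into $\mathcal{G}(V)$ for every affinoid $V$, so in fact $N=\mathcal{G}(X)$ and the ``subtle point'' you flag at the end is not an issue. The real gap is the gluing step. You invoke uniqueness on an affinoid $W\subset X_i\cap X_j$ to conclude $\mathcal{G}'_i|_W=\mathcal{G}'_j|_W$, but uniqueness only applies to sheaves that \emph{both} enjoy the universal property on $W$; there is no reason the restriction of the largest coherent subsheaf of $\mathcal{G}|_{X_i}$ should be the largest coherent subsheaf of $\mathcal{G}|_W$. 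Concretely, $\mathcal{G}'_i|_W$ corresponds to the $\OO_X(W)$-submodule of $\mathcal{F}(W)$ generated by the image of $\mathcal{G}(X_i)$, and this can be strictly smaller than $\mathcal{G}(W)$.

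Here is an explicit failure. Take $X=\P^{1,\an}_K$ with the cover $X_1=\{|T|\le 1\}$, $X_2=\{|T|\ge 1\}$, $W=\{|T|=1\}$, let $\mathcal{F}=\OO_X$, and let $\mathcal{G}(V)$ be the ideal of $f\in\OO_X(V)$ vanishing at every point of $V$ with $|T|<1$. Then $\mathcal{G}(X_1)=0$ (a nonzero element of $K\langle T\rangle$ has only finitely many zeros), so $\mathcal{G}'_1=0$; on the other hand $X_2\cap\{|T|<1\}=\emptyset$, so $\mathcal{G}|_{X_2}=\OO_{X_2}$ and $\mathcal{G}'_2=\OO_{X_2}$. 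Thus $\mathcal{G}'_1|_W=0\neq\OO_W=\mathcal{G}'_2|_W$: the local pieces do not glue. (The correct global answer is $\mathcal{G}'=0$.) The paper avoids this by working globally from the outset: it forms the image in $\mathcal{F}$ of the direct sum of \emph{all} coherent subsheaves of $\mathcal{G}$ on $X$, and observes that on each affinoid $V$ this image is a submodule of the finitely generated module $\mathcal{F}(V)$ over the Noetherian ring $\OO_X(V)$, hence is itself finitely generated; so the image is coherent and no gluing is needed.
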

\begin{proof}
	Let $\{\mathcal{G}_i\}_{i\in I}$ be the set of coherent sheaves contained in $\mathcal{G}$.
	We consider the morphism of $\OO_X$-modules
	\[\varphi\colon\bigoplus\limits_{i\in I}\mathcal{G}_i\to \mathcal{F}.\]
	We claim its image $\mathcal{G}'\subset \mathcal{G}$ is coherent. Let ${^p\mathcal{G}'}\subset \mathcal{G}$ be the image of $\varphi$ as presheaves. Then $\mathcal{G}'$ is the sheafification of ${^p\mathcal{G}'}$, and for any affinoid domain $V=\mathcal{M}(A)$, ${^p\mathcal{G}'}(V)=\sum\limits_i\mathcal{G}_i(V)\subset \mathcal{F}(V)$ is a finitely generated $A$-module. For any analytic domain $W\subset V$, we have ${^p\mathcal{G}'}(W)= {^p\mathcal{G}'}(V)\otimes_{\OO_X(V)}\OO_X(W)$, so $\mathcal{G}'(V) = {^p\mathcal{G}'}(V)$ and $\mathcal{G}'$ is coherent. It is the largest coherent sheaf contained in $\mathcal{G}$.
	
	The map 
	\[\Hom_{\OO_X}(\mathcal{H},\mathcal{G}')\rightarrow \Hom_{\OO_X}(\mathcal{H},\mathcal{G})\]
	is obviously injective. For any homomorphism $\psi\colon \mathcal{H}\to\mathcal{G}\subset\mathcal{F}$, the image $\Im(\psi)\subset \mathcal{G}$ is a coherent sheaf, so $\Im(\psi)\subset \mathcal{G}'$, so $f$ factor through $\mathcal{G}'$. This implies that $\mathcal{G}'$ is the one we want.
	
	The uniqueness is from the Yoneda lemma. 
\end{proof}

\begin{proposition}
	Let $f\colon Y\to X$ be a morphism of $K$-analytic spaces. Then there is a closed analytic subspace $Z$ of $X$ such that
	\begin{enumerate}
		\item [(a)] the morphism $f$ factors through $Z$;
		\item[(b)](Universal property) if $f$ factors through a closed analytic subspace $Z'$ of $X$, then $Z'$ contains $Z$ as a closed analytic subspace.
	\end{enumerate}
	The closed analytic space $Z$ of $X$ is called the \emph{Zariski image} of $f$, denoted by $\Im_\zar(f)$.
\end{proposition}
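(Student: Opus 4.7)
The plan is to imitate the scheme-theoretic construction of the scheme-theoretic image: build $Z$ as the closed analytic subspace cut out by the largest coherent subsheaf of the kernel of the comorphism $\OO_X\to f_*\OO_Y$. Two features make this slightly more delicate than in the algebraic setting. First, $f_*\OO_Y$ need not be coherent, so its kernel inside $\OO_X$ need not be coherent either; this is exactly why \cref{lemma:largestcoherentsubmodule} has been prepared. Second, one must verify that a geometric factorization of $f$ through a closed analytic subspace corresponds precisely to an inclusion of ideal sheaves.

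Concretely, I would start from the canonical morphism of $\OO_X$-modules on $X_G$,
\[\varphi\colon \OO_X \longrightarrow f_*\OO_Y,\]
and set $\mathcal{I}:=\Ker(\varphi)$, an $\OO_X$-submodule of the coherent sheaf $\OO_X$. Applying \cref{lemma:largestcoherentsubmodule} with $\mathcal{F}=\OO_X$ and $\mathcal{G}=\mathcal{I}$, I extract the largest coherent subsheaf $\mathcal{I}'\subset \mathcal{I}$; since $\mathcal{I}'$ is a coherent ideal, it defines a closed analytic subspace $i\colon Z\hookrightarrow X$ with $i_*\OO_Z=\OO_X/\mathcal{I}'$. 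For assertion (a), the inclusion $\mathcal{I}'\subset \Ker(\varphi)$ forces $\varphi$ to factor as $\OO_X\twoheadrightarrow i_*\OO_Z\to f_*\OO_Y$; by adjunction and the associated topological support condition, this yields a morphism $Y\to Z$ whose composition with $i$ recovers $f$.

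For assertion (b), suppose $f$ factors as $Y\xrightarrow{g} Z'\xhookrightarrow{i'} X$, where $Z'$ is the closed analytic subspace defined by a coherent ideal $\mathcal{J}\subset \OO_X$. Then $\varphi$ factors through $\OO_X/\mathcal{J}=i'_*\OO_{Z'}$, so $\mathcal{J}\subset \Ker(\varphi)=\mathcal{I}$. Since $\mathcal{J}$ is coherent and $\mathcal{I}'$ is the largest coherent subsheaf of $\mathcal{I}$, we conclude $\mathcal{J}\subset \mathcal{I}'$, which translates exactly into an inclusion of closed analytic subspaces $Z\subset Z'$ compatible with the immersions into $X$. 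Uniqueness of $Z$ up to canonical isomorphism is formal from its universal property.

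The main obstacle is the careful bookkeeping between the G-topology, where $\OO_X$ and all coherent sheaves naturally live, and the ordinary topology, which carries the underlying point set of a closed analytic subspace. In particular, the algebraic statement $\mathcal{J}\subset \Ker(\varphi)$ must be upgraded to an honest factorization of Berkovich morphisms $Y\to Z'\hookrightarrow X$, which requires checking that $f(Y)$ is supported in the zero locus of $\mathcal{J}$; this is a stalkwise verification using that $\varphi$ kills $\mathcal{J}$. Once this compatibility between the two topologies is handled, the rest of the argument is a direct transcription of the scheme-theoretic proof enabled by \cref{lemma:largestcoherentsubmodule}.
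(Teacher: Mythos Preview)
Your proposal is correct and follows essentially the same approach as the paper: define $\mathcal{I}=\Ker(\OO_X\to f_*\OO_Y)$, apply \cref{lemma:largestcoherentsubmodule} to extract the largest coherent subideal, and take $Z$ to be its vanishing locus. The only difference is presentational: where you invoke adjunction and a topological support condition to produce the factorization $Y\to Z$, the paper works this out explicitly on affinoid domains $V\subset X$, checking that $\mathcal{J}(V)\subset\Ker(\OO_X(V)\to\OO_Y(f^{-1}(V)))$ forces $f|_{f^{-1}(V)}$ to factor through $Z\cap V$ and then gluing.
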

\begin{proof}
	By (b), if $Z$ exists, then it is unique. It remains to show the existence. Let $\mathcal{I}\coloneqq\Ker(\OO_X\rightarrow f_*\OO_Y)$. By \cref{lemma:largestcoherentsubmodule}, we take the largest coherent $\OO_X$-submodule $\mathcal{J}\subset \mathcal{I}$ and set $Z=V(\mathcal{J})$. It remains to check (a) and (b).
	
	(a) 
	For any affinoid domain $V=\mathcal{M}(A)\subset X$ and any affinoid domain $U=\mathcal{M}(B)\subset f^{-1}(V)$, we have $\mathcal{J}(V)\subset \mathcal{I}(V)\subset \Ker(A\rightarrow B)$, so $U\rightarrow V$ factors through $\mathcal{M}(A/\mathcal{J}(V)) = Z\cap V$ and $U\rightarrow X$ factors through $Z$. Hence $f$ factors through $Z$. 
	
	
	(b) If $f$ factors through a closed subspace $Z'$ of $X$ with $Z'=V(\mathcal{J}')$, then $\mathcal{J}'\subset \mathcal{I}$. By the choice of $\mathcal{J}$, we have $\mathcal{J}'\subset\mathcal{J}$, so $Z'\subset Z$. 
\end{proof}
\begin{remark}
If $X, Y$ are affinoid, $f\colon \mathcal{M}(B)\to \mathcal{M}(A)$ is given by $\varphi\colon A\to B$, then $\Im_\zar(f)=\mathcal{M}(A/\Ker(\varphi))$. However, the Zariski image is not compatible with the G-topology.
\end{remark}

We may expect the Zariski image is exactly the usual image as sets. It is almost true if $Y$ is reduced or $f$ is quasi-compact.

\begin{lemma}\label{lemma:zariskiimageforreducedspace}
Let $f\colon Y\to X$ be a morphism of $K$-analytic space. If $Y$ is reduced, then $\Im_\zar(f)=\overline{f(Y)}^{X_\zar}$ with the reduce closed subspace structure.
\end{lemma}
\begin{proof}
	As a map, $f$ factor through $\overline{f(Y)}^{X_\zar}$. Since $Y$ is reduced, so $f$ factors through $\overline{f(Y)}^{X_\zar}$ with the reduced structure, see \cite[PROPOSITION~4.2~(iii)]{ducros2009les}. It remains to show the universal property of $Y\rightarrow \overline{f(Y)}^{X_\zar}$. If $f$ factors through a closed subspace $Z$ of $X$, then $\overline{f(Y)}^{X_\zar}\subset Z$ as a subset. The containment is also a morphism of analytic spaces since $\overline{f(Y)}^{X_\zar}$ is endowed with the reduced structure.
\end{proof}

\begin{lemma}\label{lemma:zariskiimageforquasicompactmorphism}
	Let $f\colon Y\to X$ be a morphism of $K$-analytic spaces. Assume that $f$ is proper. Then the following statements hold.
	\begin{enumerate}
		\item [(1)] $\mathcal{I}=\Ker(\OO_X\rightarrow f_*\OO_Y)$ is coherent. In particular, $\Im_\zar(f)=V(\mathcal{I})$.
		\item[(2)] $f(Y)= \Im_\zar(f)$.
		\item[(3)] For any analytic domain $V\subset X$, the subspace $\Im_\zar(f)\cap V$ is the Zariski image of $f|_{f^{-1}(V)}\colon f^{-1}(V)\to V$.
	\end{enumerate}
\end{lemma}
\begin{proof}
	(1) 
	Since $f$ is proper, by Kiehl's theorem, see \cite[Proposition~3.3.5]{berkovich1990spectral} or \cref{theorem:Kiehl}, we have that $f_*\OO_Y$ is coherent. Then
	$\mathcal{I}=\Ker(\OO_X\rightarrow f_*\OO_{Y})$ which is coherent. Notice that $\Im_\zar(f)$ is defined by the coherent $\OO_X$-submodule of $\mathcal{I}$, so (1) holds.
	
	(3) This is from (1).
	
	(2) By (3), it suffices to assume that $X=\mathcal{M}(A)$ is affinoid. We take a G-covering $Y=\bigcup\limits_{i=1}^nV_i$ by affinoid domains, and set $Y'=\coprod\limits_{i=1}^nV_i$, $\pi\colon Y'\to Y$ the canonical morphism which is surjective, $f'\coloneq f\pi$. Notice that $f(Y)=f'(Y')$, $\Im_\zar(f)=\Im_\zar(f')$ (by definition, $\Im_\zar(f')\subset\Im_\zar(f)$, conversely, $f$ factors through $\Im_\zar(f)$ since the morphism $Y'\to \Im_\zar(f')$ factors through $Y$). Write $Y'=\mathcal{M}(B)$, and $f'$ is induced by $\varphi\colon A\to B$. We have that $\Im_\zar(f')=\mathcal{M}(A/\Ker(\varphi))$. So the morphism $Y'\rightarrow\Im_\zar(f')$ is induced by an injective homomorphism $A/\Ker(\varphi)\rightarrow B$, hence it is dominant, and $\Im_\zar(f')=\overline{f'(Y')}^{X_\zar}=f(Y)$. This proves (2). 
\end{proof}

\begin{definition} \label{def:graph of a morphism}
	Let $f\colon Y\to X$ be a morphism of $K$-analytic space. The Zariski image of $(\id_Y,f)\colon Y\to Y\times X$ is called the \emph{graph of $f$}, denoted by $\Gamma_f$.
\end{definition}	
\subsection{Codimension}

\

We recall the definition of codimension in 	\cite[1.5.15]{ducros2018families}.
	\begin{definition}
		Let $Y$ be a Zariski-closed subset of $X$. The \emph{codimension $\codim(Y,X)$} of $Y$ in $X$ is defined as follows.
		\begin{itemize}
			\item If both $Y$ and $X$ are irreducible, $\codim(Y,X)\coloneqq\dim_KX-\dim_KY$.
			\item If $Y$ is irreducible, $\codim(Y, X)\coloneqq\sup\limits_{\substack{Z \in \mathrm{Irr}(X)\\ Y\subset Z}}\codim(Y,Z)$.
			\item In the general case, $\codim(Y, X)\coloneqq\inf\limits_{Z \in \mathrm{Irr}(Y)}\codim(Z,X)$.
		\end{itemize}
		For $x\in X$, we define the \emph{codimension of $Y$ in $X$ at $x$} as 
		\[\codim_x(Y,X)\coloneqq\begin{cases}
			\inf\limits_{\substack{Z \in \mathrm{Irr}(Y)\\ x\in Z}}\codim(Z,X) & \text{ if $x\in Y$;}\\
			+\infty & \text{ if $x\not\in Y$.}
		\end{cases}\]
	\end{definition}
	\begin{remark}
			Let $W\subset Z\subset Y\subset X$ be irreducible closed analytic subspaces. Then
			\[\codim(W,Y)=\codim(W,Z)+\codim(Z,Y),\]
			\[\dim_K(Z)+\codim(Z,Y)=\dim_K(Y).\]
	\end{remark}

	\begin{example}[\cite{ducros2007variation}~Proposition~1.11]
		Assume that $X=\mathcal{M}(A)$ is a $K$-affinoid space,  $Y=V(I)$ for some ideal $I\subset A$, and $x\in X$ with image $\xi\in\Spec(A)$. Then
		\begin{enumerate}
			\item [(1)] $\codim(Y,X) = \codim(\Spec(A/I), \Spec(A))$.
			\item[(2)] $\codim_x(Y,X) = \codim_\xi(\Spec(A/I), \Spec(A))$.
		\end{enumerate}
	\end{example}
	\begin{remark}
In particular, (1) implies that \[\codim(\Spec(A_L/I_L), \Spec(A_L)) =\codim(\Spec(A/I), \Spec(A))\] for any complete field extension $L/K$. Or we can write 
			\[\dim_KX-\dim_KY = \codim_{\mathrm{Krull}}(Y,X).\]
	\end{remark}
	

\subsection{A key lemma}


\begin{definition}
	Let $\mathcal{A}$ be a family of objects (e.g. morphisms, irreducible analytic spaces) that we care about. Let $Q$ be a property of objects in $\mathcal{A}$, so we have a disjoint union 
	\[\mathcal{A}=\{S\in \mathcal{A}\mid \text{$S$ satisfies $Q$}\}\cup\{S\in \mathcal{A}\mid \text{$S$ doesn't satisfy $Q$}\}=\colon \mathcal{A}_1\cup \mathcal{A}_2.\]
	Furthermore, let $P$ be a property of objects in $\mathcal{A}_1$. Then for an object $S\in \mathcal{A}$, we say
	\begin{itemize}
		\item $P$ is \emph{well-defined} on $S$ if $S\in \mathcal{A}_1$;
		\item $S$ \emph{satisfies $P$} if $P\in \mathcal{A}_1$ and $S$ satisfies $P$ in the usual sense.
		\item $S$ \emph{doesn't satisfy $P$} if $P\in \mathcal{A}_1$ and $S$ doesn't satisfy $P$ in the usual sense.
	\end{itemize}
\end{definition}
\begin{remark}
In this paper, the property $Q$ is implied when we talk about the property $P$, so we will omit $Q$.
\end{remark}

The following generalized result from \cite[Proposition~4.12]{ducros2009les} is crucial for extending a local result on irreducible closed subsets to be global. 

\begin{lemma}	\label{key lemma for a local property to be global on irreducible subsets}
	Let $P$ be a property of irreducible components of affinoid domains of $X$ satisfying the following properties:
	\begin{enumerate}
		\item[(a)] there is a $G$-covering $X=\bigcup\limits_{i\in I}V_i$ by affinoid domains, the property $P$ is well-defined on each irreducible component of $V_i$ (or simply say that $P$ is well-defined on $V_i$);
		\item[(b)] if $P$ is well-defined on an irreducible component $Z$ of an affinoid domain $V$, then $P$ is well-defined on each irreducible component of $W$ for any affinoid domain $W\subset V$. Moreover, in this case, for any irreducible component $T$ of $W\cap Z$,  we have $T$ satisfies $P$ if and only if $Z$ satisfies $P$.
	\end{enumerate}
	Then there exist Zariski-closed subsets $X^+_P, X_P^-$ of $X$ which are characterized by the following properties: for any affinoid domain $V$ on which $P$ is well-defined, we have
	\[X_P^+\cap V = \bigcup\limits_{\substack{T\in \mathrm{Irr}(V),\\ T \text{ satisfies $P$}}}T,\] \[X_P^-\cap V = \bigcup\limits_{\substack{T\in \mathrm{Irr}(V),\\ \text{$T$ doesn't satisfy $P$}}}T.\]
	Notice that $X=X_P^+\cup X_P^-$.
\end{lemma}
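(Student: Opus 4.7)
The plan is to define the Zariski-closed subsets $X_P^+$ and $X_P^-$ locally on the given G-covering $\{V_i\}_{i\in I}$ by the prescribed formula and then glue. For each $i\in I$, since $P$ is well-defined on $V_i$, we can set
\[V_{i,P}^{+}:=\bigcup_{\substack{T\in\mathrm{Irr}(V_i)\\ T\text{ satisfies }P}}T,\qquad V_{i,P}^{-}:=\bigcup_{\substack{T\in\mathrm{Irr}(V_i)\\ T\text{ does not satisfy }P}}T.\]
Each is a finite union of irreducible Zariski-closed subsets of the affinoid $V_i$, hence itself Zariski-closed in $V_i$. The construction then reduces to showing that $V_{i,P}^{+}$ and $V_{j,P}^{+}$ agree (set-theoretically, i.e.\ as reduced closed analytic subspaces) on every overlap $V_i\cap V_j$, and analogously for the ``$-$'' version.

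For the compatibility, fix indices $i,j$ and pick an affinoid domain $U\subset V_i\cap V_j$. By the second hypothesis (applied to $U\subset V_i$), $P$ is well-defined on each irreducible component of $U$. For any $T\in\mathrm{Irr}(U)$, the finite decomposition $U=\bigcup_{Z\in\mathrm{Irr}(V_i)}(U\cap Z)$ together with the irreducibility of $T$ forces $T\subset U\cap Z$ for some $Z\in\mathrm{Irr}(V_i)$, and the maximality of $T$ then upgrades this to $T\in\mathrm{Irr}(U\cap Z)$. The second hypothesis gives the crucial equivalence ``$T$ satisfies $P$ iff $Z$ satisfies $P$''; in particular the answer is independent of which $Z\supset T$ we picked, since any two such $Z,Z'$ both become equivalent to $T$ under $P$. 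Hence
\[V_{i,P}^{+}\cap U=\bigcup_{\substack{T\in\mathrm{Irr}(U)\\ T\text{ satisfies }P}}T=:U_{P}^{+},\]
and the same identity holds with $j$ in place of $i$. Since $V_i\cap V_j$ admits a G-covering by such affinoids $U$, we conclude $V_{i,P}^{+}$ and $V_{j,P}^{+}$ coincide on $V_i\cap V_j$; gluing produces a Zariski-closed subset $X_P^{+}\subset X$, and the analogous construction yields $X_P^{-}$. The equality $X=X_P^{+}\cup X_P^{-}$ follows immediately because every component of every $V_i$ either satisfies $P$ or does not.

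It remains to verify the characterization for an \emph{arbitrary} affinoid domain $V\subset X$ on which $P$ is well-defined, not merely those in the distinguished covering. Define $V_P^{+}$ by the same formula. For any affinoid $U\subset V\cap V_i$, applying the displayed computation once with $V$ and once with $V_i$ gives $V_P^{+}\cap U=U_P^{+}=V_{i,P}^{+}\cap U=X_P^{+}\cap U$. G-covering $V\cap V_i$ by such $U$ and then $V$ by $\{V\cap V_i\}_{i\in I}$ yields $V_P^{+}=X_P^{+}\cap V$, and similarly for the ``$-$'' side; uniqueness of $X_P^{\pm}$ is then automatic. The main subtlety in the whole argument is the bookkeeping at the level of irreducible components across different affinoid domains: one must check that every $T\in\mathrm{Irr}(U)$ really does appear as an irreducible component of some $U\cap Z$ with $Z\in\mathrm{Irr}(V)$ (so the second hypothesis can be invoked), and that the possible non-uniqueness of $Z$ introduces no inconsistency — a consistency which is itself a direct consequence of the hypothesis.
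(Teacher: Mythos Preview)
Your overall strategy---define the closed sets on each affinoid of the given covering, verify compatibility on overlaps, then glue---is the same as the paper's. The gap lies in the step where you assert
\[V_{i,P}^{+}\cap U=\bigcup_{\substack{T\in\mathrm{Irr}(U)\\ T\text{ satisfies }P}}T.\]
Your maximality argument shows that every $T\in\mathrm{Irr}(U)$ is an irreducible component of $U\cap Z$ for some $Z\in\mathrm{Irr}(V_i)$; combined with the hypothesis this yields the inclusion $U_P^{+}\subset V_{i,P}^{+}\cap U$. The reverse inclusion, however, needs the \emph{converse}: every irreducible component of $U\cap Z$ (for $Z\in\mathrm{Irr}(V_i)$) is already an irreducible component of $U$. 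Without it, take $x\in Z\cap U$ with $Z$ satisfying $P$ and let $S\in\mathrm{Irr}(U\cap Z)$ contain $x$; the hypothesis tells you $S$ satisfies $P$, but to place $x$ inside $U_P^{+}$ you must know $S\in\mathrm{Irr}(U)$, and nothing you wrote excludes the possibility that every $T\in\mathrm{Irr}(U)$ passing through $x$ satisfies $T\not\subset Z$, in which case your equivalence gives no control over whether such $T$ satisfy $P$.

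This converse is not formal in the analytic setting: an affinoid subdomain $U\subset V_i$ is not open, so $U\cap Z$ need not be irreducible, and the assertion that each of its irreducible components is maximal in $U$ requires genuine input. It is exactly \cite[Corollaire~4.11]{ducros2009les}, which the paper invokes at the corresponding point to conclude $\mathcal{E}^{+}(W)=\mathcal{E}^{+}(V)\cap W$. Once you cite this result your argument is complete and essentially identical to the paper's.
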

\begin{proof}
	For any affinoid domain $V$ on which $P$ is well-defined, set
	\begin{align*}
		\mathcal{C}^+(V)&\coloneqq\{T\in \mathrm{Irr}(V)\mid T \text{ satisfies $P$}\},\\
		\mathcal{C}^-(V)&\coloneqq\{T\in \mathrm{Irr}(V)\mid T \text{ doesn't satisfy $P$}\},\\
		\mathcal{E}^+(V)&\coloneqq\bigcup\limits_{T\in \mathcal{C}^+(V)}T,\\
		\mathcal{E}^-(V)&\coloneqq\bigcup\limits_{T\in \mathcal{C}^-(V)}T.\\
	\end{align*}
	Let $V$ be an affinoid domain on which $P$ is well-defined, and $W \subset V$ an affinoid domain. Let $Z$ be an irreducible component of $V$ and $T$ an irreducible component of $W$ contained in $Z$. By our assumption, $T\in \mathcal{C}^+(W)$$\Longleftrightarrow$ $Z\in \mathcal{C}^+(V)$. By \cite[COROLLAIRE~4.11]{ducros2009les}, we have $\mathcal{E}^+(W)=\mathcal{E}^+(V)\cap W$ and $\mathcal{E}^-(W)=\mathcal{E}^-(V)\cap W$.
	
	Let $X_P^+$ (resp. $X_P^-$) be the union of $\mathcal{E}^+(V)$ (resp. $\mathcal{E}^-(V)$) where $V$ is  an affinoid domain on which $P$ is well-defined. Then for any affinoid domain $V$ of $X$ on which $P$ is well-defined, we have  $X_P^+\cap V = \mathcal{E}^+(V)$ and $X_P^-\cap V = \mathcal{E}^-(V)$. Since $P$ is well-defined on $V_i$ for some G-covering $X=\bigcup\limits_{i\in I}V_i$ by affinoid domain, and $\mathcal{E}^+(V_i), \mathcal{E}^-(V_i) \subset V_i$ are Zariski-closed, so $X_P^+, X_P^- \subset X$ are Zariski-closed.
\end{proof}
\begin{remark}
In our application, (b) is always from the fact that for an affinoid domain $\mathcal{M}(B)\subset \mathcal{M}(A)$, the homomorphism $A\to B$ is flat.
\end{remark}

\section{Meromorphic functions and Cartier divisors}

\label{Meromorphic functions and Cartier divisors}

The sheaf of meromorphic functions and Cartier divisors are defined on a ringed space in \cite[Section~20, Section~21]{grothendieck1967egaiv4}. However, the definition is not correct, thanks to referee for pointing this out to me, see \cite{kleiman1979misconceptions}. It doesn't work since the restriction of a regular element is not necessarily regular. Fortunately, this can be remedied on analytic spaces (cf. \cite[Section~2]{gubler1998local}). In this section and next section, we will following the idea in \cite[Section~20, Section~21]{grothendieck1967egaiv4} to discuss meromorphic functions, Cartier divisors and cycles. 

Throughout this section, we fix a $K$-analytic space $X$.

\subsection{Meromorphic functions}

\

A reference for meromorphic functions is \cite[\S~2.23]{ducros2021devisser}. For convenience of readers and further uses in our paper, we study meromorphic functions here.


\begin{definition}	\label{sheaf of meromorphic functions}
	For any affinoid domain $V=\mathcal{M}(A) \subset X$, we set $K'_X(V)\coloneqq \Frac(A)$, this will define a presheaf on affinoid domains on $X$. The associated sheaf $K_X$ with respect to the G-topology on $X$ is called the \emph{sheaf of meromorphic functions} on $X$. An element of $K_X(X)$ is called a \emph{meromorphic function} on $X$. The subsheaf of invertible elements of $K_X$ is denoted by $K_X^*$.
\end{definition}
\begin{remark}
For affinoid domains $U=\mathcal{M}(B)\subset V=\mathcal{M}(A)$ of $X$, and $f\in R(A)$, the restriction of $f$ on $U$ is in $R(B)$, this implies that $K_X$ is well-defined. Indeed, this is from the fact $A\rightarrow B$ is flat.
\end{remark}
\begin{remark}
For any analytic domain $V\subset X$, we have
\[K_X(V) = \left\{(s_i)_{i}\in \prod\limits_iK_X'(V_i)\,\middle\vert\, \parbox[c]{.5\linewidth}{ $V=\bigcup\limits_iV_i$ is a G-covering of $V$ with $V_i$ affinoid and $s_i|_{V_{ijk}}= s_j|_{V_{ijk}}$ for some G-covering \ \ \ \ $V_i\cap V_j=\bigcup\limits_kV_{ijk}$ with $V_{ijk}$ affinoid}\right\}\bigg/\sim,\]
where $(s_i)_i \sim (s_j')_{j}$ if for any $i,j$, there exists a G-covering $V_i\cap V_j'=\bigcup\limits_kV_{ijk}$ with $V_{ijk}$ affinoid such that $s_i|_{V_{ijk}} = s'_j|_{V_{ijk}}$.

If $X$ is separated, then it can be simplified as
\[K_X(V) = \left\{(s_i)_{i}\in \prod\limits_iK_X'(V_i)\,\middle\vert\, \parbox[c]{.5\linewidth}{ $V=\bigcup\limits_iV_i$ is an G-covering of $V$ with $V_i$ affinoid and $s_i|_{V_{i}\cap V_j}= s_j|_{V_{i}\cap V_j}$}\right\}\bigg/\sim,\]
where $(s_i)_i \sim (s_j')_{j}$ if for any $i,j$, $s_i|_{V_{i}\cap V'_j} = s'_j|_{V_{i}\cap V'_j}$.
\end{remark}
 \begin{remark}
 	For any affinoid domain $V\subset X$, the canonical map $K_X'(V)\rightarrow K_X(V)$ is injective. In particular, $\OO_X\subset K_X$.
 	\begin{proof}
 		Given an affinoid domain $V$ and any finite G-covering $V=\bigcup\limits_{i=1}^nV_i$ by affinoid domains, let $A=\OO_X(V)$ and $A_i=\OO_X(V_i)$. We consider the restriction map $\Frac(A)\rightarrow \prod\limits_{i=1}^n \Frac(A_i)$.
 		Let $a/b\in \Frac(A)$ be such that its restriction on $\Frac(A_i)$ is $0$ for any $i$, i.e. $a=0\in A_i$. This implies that $a=0\in A$ by Tate's acyclic theorem. Hence $K_X'(V)\hookrightarrow K_X(V)$.
 		We take a G-covering $X=\bigcup\limits_{i\in I}V_i$ by affinoid domains. Then the injective map $\OO_X(V_i)\hookrightarrow K_X'(V_i)$ will induce $\OO_X\hookrightarrow K_X$. 
 	\end{proof}
 \end{remark}

\begin{definition}	\label{meromorphic section}
	For an $\OO_X$-module $\mathcal{F}$, we call $\mathcal{F}\otimes_{\OO_X}K_X$ the \emph{sheaf of meromorphic sections of $\mathcal{F}$}, and we have a canonical map
	\[\id_{\mathcal{F}}\otimes i\colon \mathcal{F}\to \mathcal{F}\otimes_{\OO_X}K_X.\]
	The sheaf $\mathcal{F}$ is called \emph{strictly without torsion} if $\id_{\mathcal{F}}\otimes i$ is injective.
	
	A global section of $\mathcal{F}\otimes_{\OO_X}K_X$ is called a \emph{meromorphic section} of $\mathcal{F}$ on $X$. 
	
	If $\mathcal{F}$ is coherent on $X$, we say a meromorphic section $s$ on $X$ is defined on a Zariski-open subset $V$ if $s|_V$ is in the image of $\mathcal{F}(V)$ via $\id_{\mathcal{F}}\otimes i$. If moreover, $\mathcal{F}$ is strictly without torsion, then there is a maximal Zariski-open subset $V$ on which s is defined, such $V$ is called the \emph{domain of definition} of $s$, denoted by $\mathrm{dom}(s)$ (i.e. $s\in \mathcal{F}(\mathrm{dom}(s))$).
\end{definition}
\begin{remark}
Notice that $\mathcal{F}\rightarrow \mathcal{F}\otimes_{\OO_X}K_X$ is the sheafification of the presheaf given by 
\[V\mapsto \mathcal{F}(V)\otimes_{\OO_X(V)}K_X'(V)\]
for any affinoid domain $V$. So for any analytic domain $V\subset X$, we have $(\mathcal{F}\otimes_{\OO_X}K_X)|_V\simeq \mathcal{F}|_V\otimes_{\OO_V}K_V$. In particular, $K_X|_V = K_V$.
\end{remark}
\begin{remark}
A locally free $\OO_{X_G}$-module $\mathcal{F}$ is strictly without torsion. Moreover, $\mathcal{F}\otimes_{\OO_X}K_X$ is a locally free $K_X$-module, here, we view $(X_G, K_X)$ as a G-ringed space.
\end{remark}

For a good $K$-analytic space, the sheaf of meromorphic functions can be given in a similar way in \cite[Section~20]{grothendieck1967egaiv4}, and will have some good properties, i.e. properties for schemes can be extended to good analytic spaces.

If  $X$ is good, and $x \in X$ is rigid, we have that \[\OO_{X,x}=\lim\limits_{\overset{\longrightarrow}{V}}\OO_X(V)\] 
where $V$ runs through affinoid domains containing $x$, see \cite[Section~2.3]{berkovich1990spectral}. In particular, it suffices that $V$ runs through (strictly) affinoid neighborhoods of $x$ in $X$.

\begin{proposition}	\label{basic properties of meromorphic functions}
	Assume that $X$ is good. For any analytic domain $V\subset X$, set
	\[\mathcal{R}(V)\coloneqq\{s\in \OO_X(V)\mid s_x\in R(\OO_{X,x}) \text{ for any $x\in V$}\}\subset \OO_X(V),\]
	which defines a sheaf on $X$. Then the following statements hold:
	\begin{enumerate}
		\item [(1)] For any affinoid domain $V\subset X$, we have $\mathcal{R}(V)=R(\OO_X(V))$. In particular, $K_X$ is the sheafification of the following presheaf: for any analytic domain $V\subset X$,
		\[V\mapsto \mathcal{R}(V)^{-1}\OO_X(V).\]
		\item[(2)] For any rigid point $x\in X$, we have $K_{X,x}' \simeq \Frac(\OO_{X,x})$. For any analytic domain $V\subset X$, the canonical homomorphism $K'_X(V)\hookrightarrow \prod\limits_{x\in V \text{ rigid }}K'_{X,x}$ is injective.
	\end{enumerate} 
\end{proposition}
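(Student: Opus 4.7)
The plan is to deduce everything from the two fundamental facts that (i) for affinoid subdomain inclusions the restriction map $\OO_X(V)\to\OO_X(V')$ is flat and (ii) at a rigid point $x$ of a good strictly $K$-analytic space the stalk $\OO_{X,x}$ is (faithfully flat over, namely the henselization of) $A_{\mathfrak{m}}$ where $V=\mathcal{M}(A)\ni x$ and $\mathfrak{m}$ is the maximal ideal corresponding to $x$. Both facts ensure that regular elements stay regular under the relevant ring extensions, so everything reduces to tracking associated primes.

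For (1), first I would show $R(\OO_X(V))\subset \mathcal{R}(V)$ by noting that for every $y\in V$, $\OO_X(V)\to \OO_{X,y}$ is a filtered colimit of flat affinoid restrictions, hence flat, and flat extensions send regular elements to regular elements. For the reverse, suppose $s\in\mathcal{R}(V)$ satisfies $st=0$ for some $t\in A:=\OO_X(V)$. For every $y\in V$ the equation $s_y t_y=0$ together with $s_y\in R(\OO_{X,y})$ forces $t_y=0$, so there is an affinoid neighborhood $V_y$ of $y$ in $V$ on which $t$ vanishes. Compactness of $V$ plus Tate's acyclicity theorem then gives $t=0$ in $A$, as desired. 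The sheafification claim is immediate from this: $K_X$ was defined as the sheafification of $V\mapsto \Frac(A)$ on affinoid domains, and $\Frac(A)=R(A)^{-1}A=\mathcal{R}(V)^{-1}\OO_X(V)$; since affinoid domains form a basis of the G-topology, the two presheaves have the same sheafification.

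For (2), I construct $K_{X,x}'\to \Frac(\OO_{X,x})$ as the filtered colimit of the maps $\Frac(\OO_X(V))\to \Frac(\OO_{X,x})$, which are well defined by (1). Injectivity is straightforward: if $a/b$ has image $0$, then $a_x=0$ (as $b_x$ is regular), so $a$ already restricts to $0$ on some affinoid neighborhood $V'$ of $x$. For surjectivity, the main issue is to lift $\alpha/\beta\in\Frac(\OO_{X,x})$ to an element of some $\Frac(\OO_X(V'))$, i.e.\ to find an affinoid $V'\ni x$ on which the lift $\tilde\beta$ of $\beta$ is a regular element. The plan is to use that $\tilde\beta_x\in R(\OO_{X,x})$ means $\tilde\beta_x$ avoids every associated prime of $\OO_{X,x}$; since $\OO_{X,x}$ is a flat (henselian) extension of $A_{\mathfrak{m}}$, these associated primes lie over associated primes of $A$ contained in $\mathfrak{m}$. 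The finitely many associated primes of $A$ not contained in $\mathfrak{m}$ correspond to closed analytic subsets of $V$ not containing $x$, so we can choose a small enough affinoid neighborhood $V'$ of $x$ meeting only the associated primes of $A$ that already pass through $\mathfrak{m}$; on such $V'$ the element $\tilde\beta$ avoids all associated primes of $\OO_X(V')$ and is therefore regular.

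For the final injectivity $K_X'(V)\hookrightarrow \prod_{x\text{ rigid}}K_{X,x}'$, I first handle the affinoid case $V=\mathcal{M}(A)$: since strictly affinoid algebras are Jacobson and Noetherian, we have $A\hookrightarrow\prod_{\mathfrak{m}}A_{\mathfrak{m}}$, and composition with the faithfully flat maps $A_{\mathfrak{m}}\hookrightarrow\OO_{X,x}$ gives $A\hookrightarrow \prod_{x\text{ rigid}}\OO_{X,x}$; so if $a/b\in\Frac(A)$ maps to $0$ everywhere then each $a_x=0$ (using $b_x$ regular), hence $a=0$. For a general analytic domain $V$, pick an affinoid G-covering $V=\bigcup V_i$ and combine the affinoid case on each $V_i$ with the separation property of $\OO_X$, noting that every rigid point of $V$ is a rigid point of some $V_i$. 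The main obstacle throughout is the surjectivity step in the first half of (2), which is where the delicate interaction between associated primes, flatness, and henselization is needed; the rest is essentially formal once (1) is in hand.
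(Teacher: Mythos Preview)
Your proof is correct and follows the paper's approach for part (1) and for the injectivity statements in part (2) essentially verbatim. The one genuine difference is in the surjectivity step of (2), namely in showing that an element $\beta\in R(\OO_{X,x})$ lifts to a regular element of $\OO_X(V')$ for some affinoid neighbourhood $V'$ of $x$. The paper handles this by a direct coherence argument: the annihilator $\mathrm{Ann}(\tilde\beta)\subset A$ is a finitely generated ideal whose extension to $\OO_{X,x}$ vanishes (by flatness of $A\to\OO_{X,x}$ and regularity of $\beta$), hence it vanishes on some affinoid neighbourhood $U=\mathcal{M}(B)$, giving $\tilde\beta\in R(B)$ immediately. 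Your route instead passes through faithful flatness of $A_{\mathfrak m}\to\OO_{X,x}$ to descend regularity to $A_{\mathfrak m}$, then uses the description of $\mathrm{Ass}(A')$ for a flat extension $A\to A'$ together with a shrinking argument to avoid the associated primes of $A$ not contained in $\mathfrak m$. Both arguments are valid; the paper's is more elementary in that it avoids any appeal to the henselization description of $\OO_{X,x}$ and the bookkeeping of associated primes, while your approach makes the underlying commutative algebra (where the zero-divisors live) more explicit. Note that your step 2 as written (``these associated primes lie over associated primes of $A$ contained in $\mathfrak m$'') is the wrong direction for what you need; what you actually use is that regularity in $\OO_{X,x}$ descends to regularity in $A_{\mathfrak m}$ by faithful flatness, and hence $\tilde\beta\notin\mathfrak p$ for every $\mathfrak p\in\mathrm{Ass}(A)$ with $\mathfrak p\subset\mathfrak m$.
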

\begin{proof}
	Notice that the presheaf $\mathcal{R}$ is a sheaf. Since $\mathcal{R}$ is a subpresheaf of $\OO_X$, and if $V=\bigcup\limits_{i\in I}V_i$ is a G-covering of an analytic domain $V$, $a_i\in \mathcal{R}(V_i)$ such that $a_i|_{V_i\cap V_j} = a_j|_{V_i\cap V_j}$ then there exists $a\in \OO_X(V)$ such that $a|_{V_i} = a_i$, then $a\in \mathcal{R}(V)$.
	
	(1) For any affinoid domain $V\subset X$ and $a\in \OO_X(V)$, we have $a$ is regular $\Longleftrightarrow$ $a\in \OO_{X,x}$ regular for any $x\in V$. Indeed, "$\Longrightarrow$" is from the flatness, for "$\Longleftarrow$", if $a\in \OO_{X,x}$ is regular, then there is an affinoid neighborhood $V_x$ of $x$ in $V$ such that $a\in R(\OO_X(V_x))$ (since $\Ker(\OO_X(V)\overset{\cdot a}{\rightarrow} \OO_X(V))$ is finitely generated). Then $a\in R(\OO_X(V))$ since $V=\bigcup\limits_{x\in V}V_x$ is a G-covering. So $\mathcal{R}(V) = R(\OO_X(V))$. Hence $K_X'(V)=\Frac(\OO_X(V))$.
	
	(2) By definition, we have a map
	\[\lim\limits_{\substack{\longrightarrow\\V}}K_X'(V)\rightarrow \mathcal{R}_x^{-1}\OO_{X,x}\]
	which is surjective, where $V$ runs through affinoid neighborhoods of $x$. If $a/b\in K_X'(V)$ with $V$ affinoid neighborhood of $x$ such that $a/b=0\in \mathcal{R}_x^{-1}\OO_{X,x}$, i.e. there is $c\in \mathcal{R}_{x}$ such that $ac=0$. We can assume that $c\in \OO_X(V)$, then $a/b=0\in K_X'(V)$. 
	
	It remains to show that $\mathcal{R}_x=R(\OO_{X,x})$. We have an injective map  $\mathcal{R}_x\hookrightarrow R(\OO_{X,x})$ by definition. Conversely, for $a\in R(\OO_{X,x})$, we consider an affinoid neighborhood $V$ of $x$ with $A=\OO_X(V)$ such that $a\in A$, then 
	\[\xymatrix{0\ar[r]& \mathrm{Ann}(a)\ar[r]& A\ar[r] &A}.\]
	Since $\mathrm{Ann}(a)$ is finitely generated and $a\in R(\OO_{X,x})$, so we can find an affinoid neighborhood $U\subset V$ of $x$ with $B=\OO_X(U)$  such that $\mathrm{Ann}(a)\otimes_AB = 0$. So $a\in R(B)$. By (1), we know that $\mathcal{R}_x=R(\OO_{X,x})$.
	
	If $a/b\in K_X'(V)$ such that $0=a/b\in K_{X,x}'$ for any rigid $x\in V$, then there exists an affinoid neighborhood $V_x$ of $x$ such that $0=a/b\in K_X'(V_x)$. Since $\mathcal{R}(V_x) = R(\OO_X(V_x))$, we have $0=a\in \OO_X(V_x)$ and $a=0\in K_X'(V)$, $a/b=0$.
\end{proof}


\subsection{Cartier divisors}

\begin{definition} 	\label{cartier divisors}
	We denote the group ${H}^0(X_G, K_X^*/\OO_X^*)$ by $\Div(X)$. The elements of $\Div(X)$ are called \emph{Cartier divisors} of $X_G$.
	
	Let $f\in H^0(X_G, K_X^*)$, its image in $\Div(X)$ is called a \emph{principal Cartier divisor} and denoted by $\mathrm{div}(f)$.
	
	We say that two Cartier divisor $D_1, D_2$ are \emph{linearly equivalent} if $D_1-D_2$ is principal, write $D_1\sim D_2$. We denote $\CaCl(X)$ the group of equivalent class of Cartier divisors.
	
	A Cartier divisor $D$ is called \emph{effective} if it is in the image of the canonical map $H^0(X_G,(\OO_X\cap K_X^*)/\OO_X^*)\rightarrow H^0(X_G, K_X^*/\OO_X^*)$, write $D\geq 0$. The set of effective Cartier divisors is denoted by $\Div_+(X)$.
\end{definition}
\begin{remark}
The exact sequence of sheaves
		\[\xymatrix{0\ar[r] & \OO_X^*\ar[r] & K_X^* \ar[r]& K_X^*/\OO_X^*\ar[r] &0}\]
		will induce a long exact sequence
		\[\xymatrix{0\ar[r] & H^0(X_G,\OO_X^*) \ar[r]& H^0(X_G,K_X^*) \ar[r]& \Div(X) \ar[r] &\\ &H^1(X_G,\OO_X^*) \ar[r]& H^1(X_G, K_X^*) \ar[r]& \cdots&}.\]
\end{remark}		
\begin{remark}
 We can represent a Cartier divisor $D$ by a system $\{(U_i, f_i)\}_{i\in I}$, where $X=\bigcup\limits_{i\in I}U_i$ is a G-covering by affinoid domains, and $f_i=a_i/b_i\in K_X'(U_i)$ such that $f_i|_{U_i\cap U_j}\in f_j|_{U_i\cap U_j} \OO_X(U_i\cap U_j)^*$ for every $i,j\in I$. Two systems $\{(U_i,f_i)\}_{i\in I}$ and $\{(V_j,g_j)\}_{j\in J}$ represent the same Cartier divisor if only only if $f_i|_{U_i\cap V_j}\in g_j|_{U_i\cap V_j}\OO_X(U_i\cap V_j)^*$ for any $i\in I, j\in J$.
		
		If $D_1=\{(U_i,f_i)\}_{i\in I}$ and $D_2=\{(V_j,g_j)\}_{j\in J}$, then $D_1+D_2 = \{(W_{ijk},f_ig_j)\}_{i\in I,j\in J}$, where $U_i\cap V_j=\bigcup\limits_{k}W_{ijk}$ is a G-covering by affinoid domains.
\end{remark}

\begin{remark}
If $X=\mathcal{M}(A)$ is an affinoid domain, let $\mathcal{X}=\Spec(A)$, then we have an injection $\Div(\mathcal{X})\hookrightarrow \Div(X)$. Indeed, the homomorphism is given by the fact that a Zariski-open subset is a domain, and the injectivity is from the fact that for a given G-covering $X=\bigcup\limits_{i}U_i$ by affinoid domains, $f\in A$ is invertible if and only if $f\in \OO_X(U_i)$ is invertible for any $i$.
\end{remark}

\begin{proposition}\label{prop:divisorandlinebundles}
	\begin{enumerate}
		\item[(1)] For any divisor $D=\{(U_i,f_i)\}_{i\in I}\in \CaCl(X)$, we can associate a subsheaf $\OO_X(D)\subset K_X$ defined by $\OO_X(D)|_{U_i}=f_i^{-1}\OO_X|_{U_i}$, which is an invertible sheaf and independent of the choice of representative. Moreover, $D\geq 0$ $\Longleftrightarrow$ $\OO_X(-D)\subset \OO_X$.
		\item[(2)] The construction above gives a homomorphism of groups $\rho\colon \Div(X)\to \Pic(X), \ \ D\mapsto \OO_X(D)$.
		\item[(3)] The homomorphism $\rho$ induces an injective homomorphism $\CaCl(X)\rightarrow \Pic(X)$ with image $\Im\rho$ corresponding to invertible sheaves contained in $K_X$.
		\item[(4)] If $X$ is affinoid and reduced, then $\rho\colon \CaCl(X)\to \Pic(X)$ is an isomorphism.
	\end{enumerate} 
\end{proposition}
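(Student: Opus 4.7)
The overall approach mimics the scheme-theoretic argument of \cite[Section~21]{grothendieck1967egaiv4} transplanted to the G-topology. For (1), I take a representative $\{(U_i,f_i)\}_{i\in I}$ and set $\OO_X(D)|_{U_i}:=f_i^{-1}\OO_X|_{U_i}\subset K_X|_{U_i}$. The cocycle condition $f_i=u_{ij}f_j$ on $U_i\cap U_j$, with $u_{ij}\in\OO_X(U_i\cap U_j)^*$, forces $f_i^{-1}\OO_X=f_j^{-1}\OO_X$ on overlaps, so the local pieces glue to a well-defined invertible subsheaf of $K_X$, trivialised on $U_i$ by $f_i^{-1}$; independence of the representative is the same calculation between two systems. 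The effectivity statement follows because $D\geq 0$ amounts to choosing each $f_i\in\OO_X(U_i)\cap K_X^*(U_i)$, and since $-D$ is represented by $\{(U_i,f_i^{-1})\}$ we have $\OO_X(-D)|_{U_i}=f_i\OO_X|_{U_i}$, which lies in $\OO_X|_{U_i}$ precisely under this condition. For (2), the sum $D_1+D_2$ is represented by the products $f_ig_j$ on intersections $W_{ijk}$, while $\OO_X(D_1)\otimes_{\OO_X}\OO_X(D_2)$ is locally generated by $f_i^{-1}\otimes g_j^{-1}\mapsto (f_ig_j)^{-1}$, yielding a canonical isomorphism and hence the group homomorphism $\rho$.

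For (3), the kernel is computed directly: if $\OO_X(D)\simeq\OO_X$ via some global generator $s\in\OO_X(D)(X)\subset K_X(X)$, then the local expression $s|_{U_i}=u_i f_i^{-1}$ with $u_i\in\OO_X(U_i)^*$ (forced by the requirement that $s$ freely generate $\OO_X(D)|_{U_i}$) exhibits $s\in K_X^*(X)$ and $D=\mathrm{div}(s^{-1})$, so $\ker\rho$ is exactly the set of principal divisors and $\CaCl(X)\hookrightarrow\Pic(X)$. The inclusion $\Im\rho\subset\{L\mid L\hookrightarrow K_X\}$ is built into the construction. Conversely, given an invertible $L\hookrightarrow K_X$, I take a G-cover $\{U_i\}$ trivialising $L$ and choose local generators $e_i\in L(U_i)\subset K_X(U_i)$; one observes that $e_i$ must be a unit in $K_X(U_i)$ (since the induced map $L\otimes_{\OO_X}K_X\to K_X$ is an isomorphism of invertible $K_X$-modules and sends $e_i\otimes 1$ to a unit), so $f_i:=e_i^{-1}\in K_X^*(U_i)$, and the transitions $e_i=u_{ij}e_j$ assemble into a Cartier divisor $D=\{(U_i,f_i)\}$ with $\OO_X(D)\simeq L$.

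For (4), with $X=\mathcal{M}(A)$ affinoid reduced, every invertible sheaf corresponds to a projective rank one $A$-module $M$. Since $A$ is reduced Noetherian, $\Frac(A)=R(A)^{-1}A$ is a finite product of fields (localising away from the finitely many minimal primes inverts every regular element), so $M\otimes_A\Frac(A)$ is a projective rank one module over a semisimple ring, hence free of rank one; fixing a generator gives an isomorphism $M\otimes_A\Frac(A)\simeq\Frac(A)$. Because $M$ is a direct summand of some $A^n$ and $A\hookrightarrow\Frac(A)$, the canonical map $M\to M\otimes_A\Frac(A)$ is injective, and composing with the chosen isomorphism produces $M\hookrightarrow\Frac(A)=K_X'(X)\hookrightarrow K_X(X)$, placing $L$ in the image of $\rho$ by (3). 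The step I expect to be the main obstacle is the converse direction of (3): verifying that a local generator of an invertible subsheaf of $K_X$ automatically lies in $K_X^*$, so that $f_i=e_i^{-1}$ is legal. This requires unwinding the description of $K_X$ as the sheafification of the presheaf of total rings of fractions (Proposition~\ref{basic properties of meromorphic functions}) and checking the extension-of-scalars statement at the stalk level; once this is in hand, both (3) and (4) follow routinely, with the only auxiliary fact needed in (4) being the semisimplicity of $\Frac(A)$ for $A$ reduced affinoid.
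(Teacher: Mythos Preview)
Your arguments for (1)--(3) are correct and match the paper's proof essentially line for line; the one point you flag as a potential obstacle---that a local generator $e_i$ of an invertible subsheaf $L\hookrightarrow K_X$ lies in $K_X^*$---is handled in the paper just as tersely (``because of the isomorphism''), and your extension-of-scalars reasoning is a valid way to justify it: after refining the cover so that $e_i\in K_X'(U_i)=\Frac(A_i)$, writing $e_i=a/b$ with $b\in R(A_i)$, the injectivity of $\OO_X|_{U_i}\xrightarrow{\cdot a}\OO_X|_{U_i}$ forces $a\in R(A_i)$, hence $e_i\in\Frac(A_i)^*$.

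For (4) you take a slightly different route from the paper. The paper reduces to the scheme-theoretic isomorphism $\CaCl(\Spec A)\simeq\Pic(\Spec A)$ for reduced $A$ and invokes the comparison $\Pic(\Spec A)\simeq\Pic(X)$ via Tate's acyclicity, assembling these into a commutative square. You instead argue directly: $\Frac(A)$ is a finite product of fields, so any invertible $A$-module $M$ satisfies $M\otimes_A\Frac(A)\simeq\Frac(A)$, and projectivity of $M$ gives $M\hookrightarrow\Frac(A)\subset K_X(X)$, whence $L\hookrightarrow K_X$ and (3) applies. Your approach is self-contained and avoids the $\Pic$-comparison, at the cost of essentially reproving the cited scheme result; the paper's approach is shorter but relies on an external reference. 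Both are correct.
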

\begin{proof}
	We follow the idea of the proof of \cite[Proposition~7.1.18]{liu2006algebraic}.
	
	(1) Assume $D=\{(V_j,g_j)\}_{j\in J}$ is another representative. Then
	\begin{align*}
		\OO_X(D)|_{U_i\cap V_j}=f_i^{-1}\OO_X|_{U_i\cap V_j}=(g_ju)^{-1}\OO_X|_{U_i\cap V_j} = g_j^{-1}\OO_X|_{U_i\cap V_j}
	\end{align*}
	where $u\in \OO_{X}(U_i\cap V_j)^*$, this implies $\OO_X(D)$ is independent of the choice of representative. By construction, $\OO_X(D)\in \Pic(X)$, and $D\geq0$ if and only if $\OO_X(D)\subset\OO_X$.
	
	(2) The map is a homomorphism. Indeed, let $D_1=\{(f_i, U_i)\}_{i\in I}$ and $D_2=\{(g_i, U_i)\}_{i\in I}$, then
	\[\rho(D_1+D_2)|_{U_i} = f_i^{-1}g_i^{-1}\OO_X|_{U_i}\simeq f_i^{-1}\OO_X|_{U_i}\otimes_{\OO_X|_{U_i}}g_i^{-1}\OO_X|_{U_i},\]
	and this isomorphism is compatible on the intersection $U_i\cap U_j$. 
	
	(3) If $D=\{(U_i,f_i)\}_{i\in I}=\mathrm{div}(f)$ is a principal divisor with $f\in H^0(X_G, K_X^*)$ and $f_i=f|_{U_i}\in K'_X(U_i)$, where $X=\bigcup\limits_{i\in I}U_i$ is a G-covering of $X$ by affinoid domains. Then $f^{-1}\in \OO_X(D)(X)$ because of the following exact sequence
	\[\xymatrix{0\ar[r] & \OO_X(D)(X) \ar[r] & \prod\limits_{i\in I}f_i^{-1}\OO_X(U_i) \ar[r] & \prod\limits_{i\in I}f_i^{-1}\OO_X(U_i\cap U_j)}.\]
	So we can define the morphism $\OO_X\rightarrow \OO_X(D), \ \ a\mapsto af^{-1}$. It is an isomorphism since it is an isomorphism on each $U_i$. Hence we have a homomorphism $\CaCl(X)\rightarrow \Pic(X)$. 
	
	If $D=\{(U_i,f_i)\}_{i\in I} \in \Div(X)$ such that $\OO_X(D)\simeq \OO_X$, then there is $g\in \OO_X(D)(X)$ such that the morphism $\OO_X\overset{\sim}{\rightarrow}\OO_X(D), \ \ a\mapsto ag$ is an isomorphism. Since $\OO_X(D)|_{U_i}\simeq f_i^{-1}\OO_X|_{U_i} = g|_{U_i}\OO_{X}|_{U_i}$
	and $f_i^{-1}\in K_X'^*(U_i)$, $g|_{U_i}=f_i^{-1}u_i\in K_X'^*(U_i)\subset K_X^*(U_i)$ with $u_i\in \OO_X^*(U_i)$, we have $g\in K_X^*(X)$ and $D=\{(U_i,f_i)\}_{i\in I} =\{(U_i, g^{-1}|_{U_i})\}_{i\in I}$ is principal.
	
	By definition, we know that $\OO_X(D) \subset K_X$. Conversely, for $L\in \Pic(X)$ with $L\subset K_X$, there is a G-covering $X=\bigcup\limits_{i\in I}U_i$ by affinoid domains such that $\OO_X|_{U_i}\simeq L|_{U_i}$. We take $g_i\in L(U_i)$ which is mapped to $1$. Then $g_i\in K_X(U_i)$ and $L|_{U_i}=g_i\OO_X|_{U_i}$, moreover, there is $f_i\in K_X^*(U_i)$ such that $f_ig_i=1$ because of the isomorphism. On $U_i\cap U_j$, we have 
	\[L|_{U_i\cap U_j} = f_i^{-1}\OO_X|_{U_i\cap U_j} = f_j^{-1}\OO_X|_{U_i\cap U_j},\]
	so there is $u\in \OO_X^*(U_i\cap U_j)$ such that $f_i^{-1}|_{U_i\cap U_j}=u f^{-1}_j|_{U_i\cap U_j}$. Then $L=\OO_X(D)$, where $D=\{(U_i,f_i)\}_{i\in I} \in \Div(X)$.
	
	(4) 
	Let $\mathcal{X}=\Spec(\OO_X(X))$, then $\CaCl(\mathcal{X})\simeq \Pic(\mathcal{X})$, see \cite[Corollary~1.19]{liu2006algebraic}. It suffices to show the surjectivity of $\Div(X)\to\Pic(X)$. We have a commutative diagram
	\[\xymatrix{\Div(\mathcal{X})\ar@{^{(}->}[r] \ar[d]_{\rotatebox{90}{$\sim$}}&\Div(X)\ar[d]^\rho\\ 
		\Pic(\mathcal{X})\ar[r]^\sim & \Pic(X)},\]
	here the isomorphism $\Pic(\mathcal{X})\simeq \Pic(X)$ is from $\mathcal{C}oh(\mathcal{X})\simeq \mathcal{C}oh(X)$ and Tate's acyclicity theorem, see the proof of \cite[Propostion~1.3.4~(iii)]{berkovich1993etale}. So (4) holds. 
\end{proof}
\begin{remark}
We know that $H^1(X_G,\OO_X^*)\simeq\Pic(X)$, then $\rho$ is the connecting map of the long exact sequence.
\end{remark}

\begin{example}	\label{rational sections}
	Let $L$ be an invertible sheaf on a normal $K$-analytic space $X$. Let $s\in H^0(X, L\otimes_{\OO_X}K_X)$ be a rational section which is non-zero on each irreducible component. Let $X=\bigcup\limits_{i\in I}U_i$ be a G-covering of $X$ by integral affinoid domains such that ${L}|_{U_i}$ is free and generated by an element $e_i$. Then these exist $f_i\in K_X^*(U_i)$ such that $s|_{V_i}=f_ie_i$. Moreover $\mathrm{div}(s)\coloneqq\{(U_i, f_i)\}_{i\in I}$ is a Cartier divisor such that $\OO_X(\mathrm{div}(s))\simeq L$.
\end{example}

\subsection{Inverse image of a Cartier divisor}

\

Next we consider the restriction of Cartier divisors on a closed analytic subspace. 

\begin{definition}
	Let $D\in \Div(X)$, and $Z\in \overline{\mathrm{Irr}(X)}$ with reduced analytic space structure.
	We say \emph{$D$ intersects $Z$ properly} if there is a G-covering
	$X=\bigcup\limits_{i\in I}U_i$ by affinoid domains such that $D=\{(U_i, a_i/b_i)\}_{i\in I}$ with the images $\overline{a}_i, \overline{b}_i\in R(\OO_Z(U_i\cap Z))$.  The set of Cartier divisor intersecting $Z$ properly is a subgroup of $\Div(X)$, denoted by $G_{Z/X}$. 
\end{definition}
\begin{remark}
 There is a natural homomorphism $G_{Z/X}\rightarrow \Div(Z)$ denoted by $D\mapsto D|_Z$, compatible with the homomorphism $\OO_X\rightarrow i_*\OO_Z$. Moreover, we have a canonical isomorphism $\OO_X(D)|_Z\simeq \OO_Z(D|_Z)$.
\end{remark}

\section{Cycles, flat pull-backs and proper push-forwards}

\label{Cycles}

In this section, we fix a $K$-analytic space $X$.

\subsection{Cycles}

\begin{definition}	\label{cycles}
	A \emph{prime cycle} on $X$ is an element in $\overline{\mathrm{Irr}(X)}$. A \emph{cycle} on $X$ is a formal sum $\alpha=\sum\limits_{Z\in \overline{\mathrm{Irr}(X)}}n_Z[Z]$ with $n_Z\in \Z$ which is G-locally finite, i.e. the set 
	\[\{Z\in \overline{\mathrm{Irr}(X)}\mid Z\cap V\not=\emptyset, n_Z\not=0\}\]
	is finite for any affinoid domain $V$. 
	The coefficient $n_Z$ is called the \emph{multiplicity of $\alpha$ at $Z$}, denoted by $\mathrm{mult}_Z(\alpha)$. We say that a cycle $\alpha$ is \emph{positive} if $\mathrm{mult}_Z(\alpha)\geq 0$ for any $Z\in \overline{\mathrm{Irr}(X)}$. The set of cycles (resp. positive cycles) is denoted by $Z(X)$ (resp. $Z_+(X)$).
	
	The union of the $Z$ such that $n_Z\not=0$ is called the \emph{support of $\alpha$}, denoted by $\Supp(\alpha)$. It is a Zariski-closed subset of $X$. In particular, $\Supp(0)=\emptyset$.
	
	A cycle $\alpha$ is (purely) \emph{of codimension $r$} (resp. \emph{of dimension $r$}) if any $Z\in \overline{\mathrm{Irr}(X)}$ with $n_Z\not=0$ has codimension $r$ (resp. dimension $r$). The cycles of codimension $r$ (resp. of dimension $r$) form a subgroup $Z^r(X)$ (resp. $Z_r(X)$) of the group of cycles on $X$.
\end{definition}
\begin{remark}
For a positive cycle $\alpha=\sum\limits_{Z\in \overline{\mathrm{Irr}(X)}}n_Z[Z]$ and any $Z\in \overline{\mathrm{Irr}(X)}$ with $n_Z\geq 1$, we can endow $Z$ with the reduced subscheme structure, then $Z=V(\mathcal{I}_Z)$ is an integral closed analytic subspace of $X$, where $\mathcal{I}_Z$ is the coherent sheaf of ideal defining $Z$. We view $\alpha$ as a closed analytic subspace defined by the sheaf of ideal $\mathcal{I}_\alpha\coloneqq\prod\limits_{Z\in \overline{\mathrm{Irr}(X)}}\mathcal{I}_Z^{n_Z}$ and we have a canonical closed immersion $j\colon \alpha=V(\mathcal{I}_\alpha)\hookrightarrow X$. This induces a homomorphisms of semi-groups
		\[Z_+(X)\rightarrow \{\text{closed analytic subspace of $X$}\}=\{\text{coherent sheaves of ideals on $X$}\}.\] 
\end{remark}

\begin{example}
	Let $X=\mathcal{M}(A)$ be a $K$-affinoid space. Set $\mathcal{X}=\Spec(A)$. Then 
	\[\Div(X)\hookrightarrow \Div(\mathcal{X}),\]
	\[Z^*(X)\simeq Z^*(\mathcal{X}).\]
	The first arrow is also an isomorphism if $X$ is regular, see \cref{prop:cartierdivisorandweildivisor}.
\end{example}  

\begin{lemma}
	Let $\alpha\in Z_+(X)$ with associated sheaf of ideal $\mathcal{I}_\alpha$. Then $V(\mathcal{I}_\alpha)=\Supp(\alpha)$ with $\mathrm{Irr}(V(\mathcal{I}_\alpha))=\{\text{maximal elements in $\alpha$}\}$.
\end{lemma}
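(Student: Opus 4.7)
The plan is to prove both assertions by working G-locally on affinoid domains of $X$, where the G-local finiteness of $\alpha$ collapses the infinite product defining $\mathcal{I}_\alpha$ into a finite one.

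For the set-theoretic equality $V(\mathcal{I}_\alpha)=\Supp(\alpha)$, fix an affinoid domain $V=\mathcal{M}(A)\subset X$ and let $Z_1,\ldots,Z_m$ be the (finitely many) $Z\in\overline{\mathrm{Irr}(X)}$ with $n_Z>0$ that meet $V$. Setting $I_i:=\mathcal{I}_{Z_i}(V)\subset A$, the restriction $\mathcal{I}_\alpha|_V$ is the coherent ideal associated with $I_1^{n_{Z_1}}\cdots I_m^{n_{Z_m}}$. The identity $V(IJ)=V(I)\cup V(J)$, valid on a Berkovich affinoid by multiplicativity of points, together with $V(I_i^{n_{Z_i}})=V(I_i)=Z_i\cap V$, yields
\[V(\mathcal{I}_\alpha)\cap V=\bigcup_{i=1}^m(Z_i\cap V)=\Supp(\alpha)\cap V.\]
Gluing over a G-covering completes the first assertion.

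For the equality $\mathrm{Irr}(V(\mathcal{I}_\alpha))=\{\text{maximal elements in $\alpha$}\}$, the whole argument reduces to the subclaim $(\ast)$: every $Z'\in\overline{\mathrm{Irr}(V(\mathcal{I}_\alpha))}$ is contained in some $W\in\overline{\mathrm{Irr}(X)}$ with $n_W>0$. Granting $(\ast)$, both inclusions are formal. If $Z$ is maximal in $\alpha$, any strictly larger $Z'\in\overline{\mathrm{Irr}(V(\mathcal{I}_\alpha))}$ would sit inside some such $W$, giving $Z\subsetneq W$ and contradicting maximality; so $Z\in\mathrm{Irr}(V(\mathcal{I}_\alpha))$. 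Conversely, if $Z\in\mathrm{Irr}(V(\mathcal{I}_\alpha))$ then $(\ast)$ provides $W$ with $n_W>0$ and $Z\subset W$; since $W\in\overline{\mathrm{Irr}(V(\mathcal{I}_\alpha))}$, maximality of $Z$ forces $Z=W$, and any strictly larger $W'$ with $n_{W'}>0$ would again lie in $\overline{\mathrm{Irr}(V(\mathcal{I}_\alpha))}$ and violate maximality of $Z$.

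The main obstacle is $(\ast)$ itself. A naive ``irreducibility plus finite union'' argument does not apply directly to $Z'$, because although $Z'\cap V$ lies in the finite union $\bigcup_i(Z_i\cap V)$ on any affinoid $V$ meeting $Z'$, the intersection $Z'\cap V$ need not itself be irreducible in $V$. The remedy is to pass to local irreducible components: by the structure theory of irreducible Zariski-closed subsets of $K$-analytic spaces that underlies \cref{key lemma for a local property to be global on irreducible subsets} (cf.\ \cite[Corollaire~4.11]{ducros2009les}), every $T\in\mathrm{Irr}(Z'\cap V)$ is an irreducible Zariski-closed subset of $V$ whose Zariski closure in $X$ equals $Z'$. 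Since $T$ is irreducible in $V$ and sits inside the finite union $\bigcup_i(Z_i\cap V)$, it must be contained in some single $Z_j\cap V$; taking Zariski closure in $X$ then yields $Z'\subset Z_j$, which is precisely $(\ast)$.
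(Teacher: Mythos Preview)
Your proof is correct and follows the same local reduction the paper invokes in its one-line argument (``This is local, and we can deduce this lemma from the example above''), but you spell out the details the paper omits---in particular, the subclaim $(\ast)$ and its proof via Ducros's result that $\overline{T}^{X_{\mathrm{Zar}}}=Z'$ for $T\in\mathrm{Irr}(Z'\cap V)$, which is precisely what makes the local statement about irreducible components globalize under G-local finiteness. The paper's proof takes this passage for granted by appealing to the affinoid case and the identification $Z^*(V)\simeq Z^*(\Spec A)$.
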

\begin{proof}
	This is local, and we can deduce this lemma from the example above.
\end{proof}

\begin{definition}
	Let $V$ be an analytic domain of $X$. For a prime cycle $[Z]\in Z^*(X)$, we set
	\[[Z\cap V]\coloneqq\sum\limits_{Z_i\in  \mathrm{Irr}(Z\cap V)}[Z_i].\]
	This can be extended to general cycles by linearity, i.e. for a cycle $\alpha=\sum\limits_{Z\in \overline{\mathrm{Irr}(X)}}n_Z[Z]$, its \emph{restriction} is defined as $\alpha|_V\coloneq \sum\limits_{Z\in \overline{\mathrm{Irr}(X)}}n_Z[Z\cap V]\in Z^*(V)$.
\end{definition}

\begin{lemma} 	\label{lemma:localequivalence}
	Let $X=\bigcup\limits_{i\in I}V$ be a G-covering of by affinoid domains, and $\alpha, \beta\in \mathrm{Div}(X)$ (resp. $Z^*(X)$). Then $\alpha=\beta$ $\Longleftrightarrow$ $\alpha|_{V_i}=\beta|_{V_i}$ for any $i\in I$.
\end{lemma}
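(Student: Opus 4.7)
The forward direction is immediate, so I focus on the reverse implication, treating the two cases separately.

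For the divisor case, recall that $\Div(X) = H^0(X_G, K_X^*/\OO_X^*)$ is by definition the group of global sections of a sheaf on the G-site $X_G$. Since $\{V_i\}_{i \in I}$ is a G-covering of $X$, the sheaf axiom applied to $\alpha - \beta$ immediately yields $\alpha = \beta$.

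For the cycle case, I plan to show that for every $Z \in \overline{\mathrm{Irr}(X)}$, the multiplicity $\mathrm{mult}_Z(\alpha)$ can be recovered from $\alpha|_{V_i}$ for any single $V_i$ meeting $Z$. Fix such a $V_i$ (which exists since the $V_i$ cover $X$), endow $Z \cap V_i$ with its reduced structure, and let $T$ be an irreducible component of $Z \cap V_i$. By the fact that an irreducible component of a Zariski-closed subset retains its dimension after intersecting with an affinoid domain (standard in Berkovich theory; compare \cite{ducros2007variation} and the material around \cref{dimension codimension}), $\dim T = \dim Z$. Hence $\overline{T}^{X_\zar}$ is an irreducible Zariski-closed subset of $X$ contained in $Z$ with $\dim \overline{T}^{X_\zar} = \dim Z$, which forces $\overline{T}^{X_\zar} = Z$ by irreducibility of $Z$. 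In particular $Z$ is uniquely determined by $T$: if $Z' \neq Z$ also had $T$ as an irreducible component of $Z' \cap V_i$, the same argument would give $Z' = \overline{T}^{X_\zar} = Z$, a contradiction.

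Consequently, among all prime cycles appearing in $\alpha$, only $Z$ contributes to the coefficient at $T$ in $\alpha|_{V_i}$, so this coefficient equals $\mathrm{mult}_Z(\alpha)$; likewise for $\beta$. The hypothesis $\alpha|_{V_i} = \beta|_{V_i}$ then gives $\mathrm{mult}_Z(\alpha) = \mathrm{mult}_Z(\beta)$, and since $Z$ was arbitrary, $\alpha = \beta$. The only substantive step is the identification $\overline{T}^{X_\zar} = Z$, which rests on the dimension-invariance of irreducible components under restriction to an affinoid domain; everything else is formal bookkeeping.
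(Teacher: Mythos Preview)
Your proof is correct and follows essentially the same approach as the paper's. The paper handles the divisor case by appealing to the explicit description of Cartier divisors (equivalently the sheaf property you invoke) and the cycle case by asserting directly that $n_Z[Z\cap V_i]=m_Z[Z\cap V_i]$; your argument makes explicit the one point the paper leaves implicit, namely that distinct $Z\in\overline{\mathrm{Irr}(X)}$ cannot share an irreducible component of their traces on $V_i$, via the identification $\overline{T}^{X_\zar}=Z$.
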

\begin{proof}
	It suffices to show the "if" part. If $\alpha, \beta\in \Div(X)$, then the result holds from the expression of Cartier divisors. If $\alpha=\sum\limits_{Z}n_Z[Z], \beta=\sum\limits_{Z}m_Z[Z]\in Z^k(X)$ such that $\alpha|_{V_i}=\beta|_{V_i}$ for any $i\in I$, then $n_Z[Z\cap V_i] = m_Z[Z\cap V_i]$ for any $Z\in \overline{\mathrm{Irr}(X)}$ with $Z\cap V_i\not=\emptyset$, so $n_Z=m_Z$. 
\end{proof}
\subsection{Cycle associated to a coherent sheaf}

\

We will construct a homomorphism $\Div({X})\rightarrow Z^1(X)$ as we do in algebraic geometry. Recall, for a Noetherian affine scheme $\mathcal{X}=\Spec(A)$, a coherent sheaf $\mathcal{F}=\widetilde{M}$ on $\mathcal{X}$, and an irreducible component $Z$ of $\Supp(\mathcal{F})$, we set $\mathrm{mult}_Z(\mathcal{F}) \coloneqq \mathrm{length}_{A_{\mathfrak{p}}}(M_{\mathfrak{p}})$, called the multiplicity of $Z$ in $\mathcal{F}$, where $\mathfrak{p}\in \mathcal{X}$ is the prime ideal corresponding to $Z$. For a divisor $D\in \Div(\mathcal{X})$ and a codimension one prime cycle  $Z=\overline{\{z\}}\in Z^1(\mathcal{X})$, we set $\mathrm{mult}_Z(D)\coloneqq\mathrm{mult}_{\OO_{\mathcal{X},z}}(D_z)$ the multiplicity of $Z$ in $D$. For an affinoid space $\mathcal{M}(A)$, we have similar notation via $\mathcal{C}oh(\mathcal{M}(A))\simeq \mathcal{C}oh(\Spec(A))$.

\begin{lemma} 	\label{multiplicity of a coherent sheaf}
	Let $\mathcal{F}$ be a coherent sheaf on $X$. For any irreducible component $Z$ of $\Supp(\mathcal{F})$ with reduced analytic space structure, and an affinoid domain $V\subset X$ with $Z\cap V\not=\emptyset$, we set 
	\[\mathrm{mult}_Z(\mathcal{F})\coloneq\mathrm{mult}_{T}(\mathcal{F}|_V)\]
	where $T$ is an irreducible component of $Z\cap V$ with $\overline{T}^{\Supp(\mathcal{F})_{\mathrm{Zar}}}=Z$. Then $\mathrm{mult}_Z(\mathcal{F})$ is a positive integer which is independent of the choice of $T$ and $V$. We call $\mathrm{mult}_Z(\mathcal{F})$ the \emph{multiplicity of $Z$ in $\mathcal{F}$}.
\end{lemma}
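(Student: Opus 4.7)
The plan is to define $\mathrm{mult}_Z(\mathcal{F})$ via ordinary scheme-theoretic length on any single affinoid and then use \cref{key lemma for a local property to be global on irreducible subsets} to verify independence of the choices.

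First, I would fix $V=\mathcal{M}(A)$ meeting $Z$, write $\mathcal{F}|_V=\widetilde{M}$, and use the identification $Z^*(V)\simeq Z^*(\Spec A)$: an irreducible component $T$ of $Z\cap V$ with $\overline{T}^{\Supp(\mathcal{F})_{\mathrm{Zar}}}=Z$ corresponds to a prime $\mathfrak{p}\subset A$ that is a minimal prime of $\Supp(M)$. Then $A_\mathfrak{p}$ is Artinian, $M_\mathfrak{p}$ has finite positive length over $A_\mathfrak{p}$, and $\mathrm{mult}_T(\mathcal{F}|_V):=\mathrm{length}_{A_\mathfrak{p}}(M_\mathfrak{p})$ is a well-defined positive integer. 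This handles existence and positivity on each single affinoid.

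To prove independence of the choices of $V$ and $T$ simultaneously, I would apply \cref{key lemma for a local property to be global on irreducible subsets} to the property $P_n$ on irreducible components of affinoid domains of $X$: \emph{the component lies in $\Supp(\mathcal{F})$ and its length-multiplicity equals $n$}. This property is manifestly well-defined on every affinoid domain, so the only nontrivial hypothesis of the key lemma is the compatibility condition: if $W=\mathcal{M}(B)\subset V=\mathcal{M}(A)$ are affinoid domains, $T$ corresponds to a minimal prime $\mathfrak{p}$ of $\Supp(M)$, and $T'$ is an irreducible component of $T\cap W$ corresponding to $\mathfrak{q}\subset B$, then $\mathrm{length}_{A_\mathfrak{p}}(M_\mathfrak{p})=\mathrm{length}_{B_\mathfrak{q}}((M\otimes_A B)_\mathfrak{q})$. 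Granted this, the key lemma produces Zariski-closed subsets $X_{P_n}^+\subset X$ whose union covers $\Supp(\mathcal{F})$; since each global irreducible component $Z$ of $\Supp(\mathcal{F})$ is irreducible, it lies in a unique $X_{P_n}^+$, and this integer $n$ is the well-defined multiplicity $\mathrm{mult}_Z(\mathcal{F})$, independent of all choices.

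The main obstacle is the algebraic compatibility above. Since $A\to B$ is flat and $\mathfrak{q}$ is minimal over $\mathfrak{p} B$, the going-down theorem gives $\mathfrak{q}\cap A=\mathfrak{p}$, and the standard flat base change formula for length yields
\[\mathrm{length}_{B_\mathfrak{q}}\bigl((M\otimes_A B)_\mathfrak{q}\bigr)=\mathrm{length}_{A_\mathfrak{p}}(M_\mathfrak{p})\cdot \mathrm{length}_{B_\mathfrak{q}}(B_\mathfrak{q}/\mathfrak{p} B_\mathfrak{q}).\]
The desired equality therefore reduces to $\mathfrak{p} B_\mathfrak{q}=\mathfrak{q} B_\mathfrak{q}$, i.e.\ the fiber of $\Spec B\to\Spec A$ at $\mathfrak{p}$ being of length one at $\mathfrak{q}$. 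This should follow from dimension preservation for affinoid domain inclusions (which forces $T'$ to be zero-dimensional in the fiber over $T$ and of the same Krull dimension as $T$) combined with standard analytic-geometric properties of such inclusions; pinning down exactly which input suffices here is the main technical point of the argument.
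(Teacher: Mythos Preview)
Your overall plan—compute the multiplicity as a scheme-theoretic length on one affinoid and then invoke \cref{key lemma for a local property to be global on irreducible subsets}—is precisely the paper's. Two points deserve attention.

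First, a small correction to your use of the key lemma: as stated, that lemma concerns irreducible components of affinoid domains $V$ of the ambient space, not irreducible components of $\Supp(\mathcal{F})\cap V$, so your property $P_n$ is essentially never satisfied by a component of $V$ itself. The fix (which the paper makes explicitly) is to apply the lemma with $X$ replaced by $Z$: then the irreducible components of $Z\cap V$ are exactly the objects to which $P$ applies, and irreducibility of $Z$ forces $Z=Z_{P}^+$ once a single $T$ satisfies $P$.

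Second, and more substantively, the compatibility step. The paper's argument here is different from yours: it chooses a Berkovich point $y\in W$ with $\Ker(|\cdot|_y)=\mathfrak{q}$, observes that its image $x\in V$ has $\Ker(|\cdot|_x)=\mathfrak{p}$ and $\mathscr{H}(x)=\mathscr{H}(y)$ (since $W\hookrightarrow V$ is an inclusion of analytic domains), and then rewrites both lengths as dimensions over this common completed residue field. Your route via the flat base-change length formula is correct, and the input you are missing to pin down $\mathfrak{p}B_\mathfrak{q}=\mathfrak{q}B_\mathfrak{q}$ is that \emph{restriction to an analytic domain preserves reducedness}: since $A/\mathfrak{p}$ is a domain and the affinoid-domain map $A\to B$ is regular (flat with geometrically regular fibers, a consequence of the excellence results of \cite{ducros2009les}), the ring $B/\mathfrak{p}B=(A/\mathfrak{p})\otimes_A B$ is reduced, so its localization at the minimal prime $\mathfrak{q}/\mathfrak{p}B$ is a field, i.e.\ $\mathrm{length}_{B_\mathfrak{q}}(B_\mathfrak{q}/\mathfrak{p}B_\mathfrak{q})=1$. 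With this cited, your argument is complete and isolates the algebraic content somewhat more transparently than the paper's point-based computation.
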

\begin{proof}
	For a fixed irreducible component $Z$ of $\Supp(\mathcal{F})$, and any affinoid domain $V, W\subset X$ with $W\subset V$, $Z\cap W\not=\emptyset$, we claim that 
	\[\mathrm{mult}_{T}(\mathcal{F}|_V) = \mathrm{mult}_{T'}(\mathcal{F}|_W)\]
	where $T\in\mathrm{Irr}(Z\cap V)$ (resp.$T'\in\mathrm{Irr}(Z\cap W)$) with $\overline{T'}^{V_{\mathrm{Zar}}}=T$, $\overline{T}^{X_{\mathrm{Zar}}}=Z$. Indeed, let $V=\mathcal{M}(A), W=\mathcal{M}(B)$ and $\mathcal{F}|_V=\widetilde{M}$. We shall show that 
	\[\mathrm{length}_{A_{\mathfrak{p}}}(M_{\mathfrak{p}}) =\mathrm{length}_{B_{\mathfrak{q}}}(M_{\mathfrak{p}}\otimes_{A_{\mathfrak{p}}}B_{\mathfrak{q}})\]
	where $\mathfrak{p}\subset A$ (resp. $\mathfrak{q}\subset B$) is the prime ideal corresponding to $T$ (resp. $T'$). By \cite[\S~2.1.3~(1)]{ducros2018families}, $A_{\mathfrak{p}}\to B_{\mathfrak{q}}$ is regular. Since $\mathfrak{q}\cap A=\mathfrak{p}$ and $\dim A_{\mathfrak{p}} = \dim B_{\mathfrak{q}}=0$, we have that $B_{_{\mathfrak{q}}}\otimes_{A_{\mathfrak{p}}}k(\mathfrak{p})=B_{_{\mathfrak{q}}}/\mathfrak{p}B_{_{\mathfrak{q}}}$ is a field, so $\mathfrak{p}A_{\mathfrak{p}}\otimes_{A_{\mathfrak{p}}}B_{\mathfrak{q}}=\mathfrak{p}B_{\mathfrak{q}}=\mathfrak{q}B_{\mathfrak{q}}$. Hence 
	\begin{align*}
\mathrm{length}_{A_{\mathfrak{p}}}(M_{\mathfrak{p}})& = \sum\limits_{n=1}^\infty\dim_{k(\mathfrak{p})}(M_{\mathfrak{p}}\otimes_{A_{\mathfrak{p}}}\mathfrak{p}^n/\mathfrak{p}^{n+1})\\
& = \sum\limits_{n=1}^\infty\dim_{k(\mathfrak{q})}(M_{\mathfrak{p}}\otimes_{A_{\mathfrak{p}}}\mathfrak{p}^n/\mathfrak{p}^{n+1}\otimes_{k(\mathfrak{p})}k(\mathfrak{q}))\\
&= \sum\limits_{n=1}^\infty\dim_{k(\mathfrak{q})}(M_{\mathfrak{p}}\otimes_{A_{\mathfrak{p}}}\mathfrak{p}^nA_{\mathfrak{p}}\otimes_{A_{\mathfrak{p}}}k(\mathfrak{q}))\\
&= \sum\limits_{n=1}^\infty\dim_{k(\mathfrak{q})}(M_{\mathfrak{p}}\otimes_{A_{\mathfrak{p}}}\mathfrak{q}^n/\mathfrak{q}^{n+1})\\
&=\mathrm{length}_{B_{\mathfrak{q}}}(M_{\mathfrak{p}}\otimes_{A_{\mathfrak{p}}}B_{\mathfrak{q}}).
	\end{align*}

	To show the lemma, we apply \cref{key lemma for a local property to be global on irreducible subsets}. Let $Z\in Z^1(X)$ be a prime cycle, and $m=\mathrm{mult}_{T}(\mathcal{F}|_V)$ for some affinoid domain $V\subset X$ with $Z\cap V\not=\emptyset$, where $T\in\mathrm{Irr}(Z\cap V)$ with $\overline{T}^{X_{\mathrm{Zar}}}=Z$. For $V$ given as before, we say an irreducible component $T\in\mathrm{Irr}(Z\cap V)$ satisfies $P$ if $\mathrm{mult}_{T}(\mathcal{F}|_V)=m$. After replacing $X$ by $Z$, from our claim, we see that $P$ satisfies the hypothesis in Lemma~\ref{key lemma for a local property to be global on irreducible subsets}. Then there are Zariski-closed subsets $Z_P^+, Z_P^-$ of $Z$ such that 
	\[Z_P^+\cap V = \bigcup\limits_{\substack{T\in \mathrm{Irr}(Z\cap V),\\ T \text{ satisfies $P$}}}T,\] \[Z_P^-\cap V = \bigcup\limits_{\substack{T\in \mathrm{Irr}(Z\cap V),\\ \text{$T$ doesn't satisfy $P$}}}T,\]
	and $Z=Z_P^+\cup Z_P^-$. Since $Z$ is irreducible and there is some $T \subset Z_P^+$, we have $Z=Z_P^+$. This implies the lemma.
\end{proof}

\begin{definition}	\label{cycle associated to a coherent sheaf}
	For a coherent sheaf $\mathcal{F}$ on $X$  with $\codim(\Supp(\mathcal{F}),X)\geq k$, we set \[[\mathcal{F}]^k\coloneq\sum\limits_{Z\in \mathrm{Irr}(\Supp(\mathcal{F}))^k}\mathrm{mult}_Z(\mathcal{F})[Z]\in Z^k(X),\]
	called the \emph{cycle associated to $\mathcal{F}$ with codimension $k$}.
\end{definition}
\begin{remark}\label{remark:cycle associated to a coherent sheaf}
By \cref{multiplicity of a coherent sheaf}, it is not hard to have the following result. Let $V=\mathcal{M}(A)\subset X$ be an affinoid domain, and $\mathcal{F}$ a coherent sheaf on $X$. Set $\mathcal{V}=\Spec(A)$ and $\mathcal{F}^{\mathrm{al}}_V$ the coherent sheaf on $\mathcal{V}$ corresponding to $\mathcal{F}|_V$. Then 
		\[[\mathcal{F}|_V]^k = [\mathcal{F}^{\mathrm{al}}_V]^k,\]
		here we identify $Z^*(V)\simeq Z^*(\mathcal{V})$. 
\end{remark}

\begin{definition} 	\label{cycle associated to closed analytic subspace}
	For a closed analytic subspace $Y$ of $X$ with $\codim(Y,X)\geq k$, we set
	\[\mathrm{mult}_Z(Y)\coloneqq \mathrm{mult}_Z(\OO_Y),\]
	for any $Z\in \mathrm{Irr}(Y)$, called the \emph{multiplicity of $Z$ in $Y$}, and set 
	\[[Y]^k\coloneqq\sum\limits_{\substack{Z\in \mathrm{Irr}(Y)\\ Z\in Z^k(X)}}\mathrm{mult}_Z(Y)[Z]\in Z^k(X),\]
	called the \emph{cycle associated to $Y$ with codimension $k$}.
\end{definition}

\subsection{Weil divisors}

\begin{definition} 	\label{weil divisor}
	An element in $Z^1(X)$ is called a \emph{Weil divisor} on $X$.
\end{definition}

\begin{lemma} 	\label{multiplicity of a divisor}
	Let $D\in \Div(X)$. For any prime cycle $Z\in Z^1(X)$, and any affinoid domain $V\subset X$ with $Z\cap V\not=\emptyset$, $D|_{V}\in K'_X(V)$, we set 
	\[\mathrm{mult}_Z(D)\coloneqq\mathrm{mult}_{T}(D|_V)\]
	where $T\in\mathrm{Irr}(Z\cap V)$ with $\overline{T}^{X_{\mathrm{Zar}}}=Z$. Then $\mathrm{mult}_Z(D)$ is independent of the choice of $T$ and $V$. We call $\mathrm{mult}_Z(D)$ the \emph{multiplicity of $Z$ for $D$}.
\end{lemma}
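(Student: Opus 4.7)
The plan is to reduce the well-definedness of $\mathrm{mult}_Z(D)$ to the already-established \cref{multiplicity of a coherent sheaf}. For an affinoid domain $V=\mathcal{M}(A)$ on which $D|_V = a/b \in K_X'(V) = \Frac(A)$ with $a,b\in R(A)$, and for $T \in \mathrm{Irr}(Z\cap V)$ with $\overline{T}^{X_\zar} = Z$, I will take as the definition
\[\mathrm{mult}_T(D|_V) := \mathrm{mult}_T(\OO_V/a\OO_V) - \mathrm{mult}_T(\OO_V/b\OO_V),\]
each term being the coherent-sheaf multiplicity of \cref{multiplicity of a coherent sheaf}, extended by zero when $T$ is not an irreducible component of the support of the corresponding sheaf. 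This extension is consistent: since $a,b\in R(A)$, both sheaves $\OO_V/a\OO_V$ and $\OO_V/b\OO_V$ have support of codimension $\geq 1$, so a codimension-one $T$ is either a component of that support or is disjoint from it at the level of prime ideals.

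I will then verify independence of the representative $a/b$. If $a/b = a'/b'$ in $\Frac(A)$, then $ab' = a'b$ in $A$. Applied to $c,c' \in R(A)$, the short exact sequence $0\to A/cA\to A/cc'A\to A/c'A\to 0$ induced by multiplication by $c'$, localized at the prime $\mathfrak{p}$ corresponding to $T$, yields the additivity $\mathrm{mult}_T(cc'A) = \mathrm{mult}_T(cA) + \mathrm{mult}_T(c'A)$, where all three lengths are finite (and vanish when the element lies outside $\mathfrak{p}$). Two applications then give $\mathrm{mult}_T(a)-\mathrm{mult}_T(b) = \mathrm{mult}_T(a')-\mathrm{mult}_T(b')$, so the definition does not depend on the representative.

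For independence of $T$ and $V$, I will appeal directly to \cref{multiplicity of a coherent sheaf}: each of $\mathrm{mult}_T(\OO_V/a\OO_V)$ and $\mathrm{mult}_T(\OO_V/b\OO_V)$ depends only on the Zariski closure $Z$ of $T$ in $X$, and not on the specific $T\in\mathrm{Irr}(Z\cap V)$ nor on the affinoid domain $V$ (provided $D$ has a rational representation on it). Their difference inherits this invariance, which gives the lemma. The main point of care is thus the mild extension of the coherent-sheaf multiplicity to codimension-one $T$ that happen to miss the support; no fresh invocation of \cref{key lemma for a local property to be global on irreducible subsets} is needed, since all the globalization has already been carried out in \cref{multiplicity of a coherent sheaf}.
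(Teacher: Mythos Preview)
Your reduction to the numerator/denominator decomposition and your verification that $\mathrm{mult}_T(a)-\mathrm{mult}_T(b)$ is independent of the representative $a/b$ are correct, and this is essentially how the paper begins as well. The gap is in your final paragraph. You assert that each of $\mathrm{mult}_T(\OO_V/a\OO_V)$ and $\mathrm{mult}_T(\OO_V/b\OO_V)$ depends only on the Zariski closure $Z$ of $T$ in $X$, invoking \cref{multiplicity of a coherent sheaf}. But that lemma applies to a coherent sheaf defined on all of $X$; the sheaf $\OO_V/a\OO_V$ lives only on $V$, and the regular element $a$ itself varies with the choice of $V$ and of representative. There is in general no global coherent sheaf restricting to $\OO_V/a\OO_V$ on each such $V$ (a global decomposition $D=D_+-D_-$ into effective divisors need not exist without further hypotheses on $X$). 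Concretely, if $D|_{V_1}=a_1/b_1$ and $D|_{V_2}=a_2/b_2$ with $T_i\subset V_i$ both having Zariski closure $Z$, there is no reason for $\mathrm{mult}_{T_1}(a_1)$ and $\mathrm{mult}_{T_2}(a_2)$ to agree individually; only the difference is invariant, and that is precisely the statement to be proved.

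What \cref{multiplicity of a coherent sheaf} (more precisely, the claim established in its proof) does give you, applied on $V$ to the sheaf $\OO_V/a\OO_V$, is the comparison for nested affinoids: if $W\subset V$ and $T'\in\mathrm{Irr}(Z\cap W)$ satisfies $\overline{T'}^{V_{\mathrm{Zar}}}=T$, then $\mathrm{mult}_{T'}(D|_W)=\mathrm{mult}_T(D|_V)$. Passing from this local compatibility to independence across arbitrary affinoid domains, or even across two distinct components $T_1,T_2\in\mathrm{Irr}(Z\cap V)$ inside a single $V$, genuinely uses the irreducibility of $Z$ via \cref{key lemma for a local property to be global on irreducible subsets}. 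The paper carries out exactly this step, and it is not subsumed by the earlier invocation; a fresh use of the key lemma is needed.
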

\begin{proof}
	The proof is similar with the one of Lemma~\ref{multiplicity of a coherent sheaf}.
	
	For any prime cycle $Z\in Z^1(X)$ and any affinoid domain $V, W\subset X$ with $W\subset V$, $Z\cap W\not=\emptyset$, $D|_{V}\in K'_X(V)$, we claim that 
	\[\mathrm{mult}_{T}(D|_V) = \mathrm{mult}_{T'}(D|_W),\]
	where $T\in\mathrm{Irr}(Z\cap V)$ (resp.$T'\in\mathrm{Irr}(Z\cap W)$) with $\overline{T'}^{V_{\mathrm{Zar}}}=T$, $\overline{T}^{X_{\mathrm{Zar}}}=Z$. Indeed, since both sides are additive, we can assume that $D|_V = f\in R(\OO_X(V))$. Let $Y\subset V$ be a closed analytic subspace determined by $f\in \OO_X(V)$, then our claim is from Lemma~\ref{multiplicity of a coherent sheaf}.
	
	To show the lemma, we apply Lemma~\ref{key lemma for a local property to be global on irreducible subsets}. Let $m=\mathrm{mult}_{T}(D|_V)$ for some affinoid domain $V\subset X$ with $Z\cap V\not=\emptyset$, $D|_{V}\in K'_X(V)$, where $T\in\mathrm{Irr}(Z\cap V)$ with $\overline{T}^{X_{\mathrm{Zar}}}=Z$. For $V$ given as before, we say that an irreducible component $T\in\mathrm{Irr}(Z\cap V)$ satisfies $P$ if $\mathrm{mult}_{T}(D|_V)=m$. After replacing $X$ by $Z$, from our claim, we see that $P$ satisfies the hypothesis in Lemma~\ref{key lemma for a local property to be global on irreducible subsets}. Then there are Zariski-closed subset $Z_P^+, Z_P^-$ of $Z$ such that 
	\[Z_P^+\cap V = \bigcup\limits_{\substack{T\in \mathrm{Irr}(Z\cap V),\\ T \text{ satisfies $P$}}}T,\] \[Z_P^-\cap V = \bigcup\limits_{\substack{T\in \mathrm{Irr}(Z\cap V),\\ \text{$T$ doesn't satisfy $P$}}}T,\]
	and $Z=Z_P^+\cup Z_P^-$. Since $Z$ is irreducible, and there is some $T \subset Z_P^+$, so $Z=Z_P^+$. This implies the lemma.
\end{proof}

\begin{definition} 	\label{class group}
	For any $D\in \Div(X)$, we set
	\[[D]\coloneqq\sum\limits_{\substack{Z\in \overline{\mathrm{Irr}(X)}\\ \codim(Z,X)=1}}\mathrm{mult}_Z(D)[Z]\in Z^1(X),\]
	called the \emph{Weil divisor associated to $D$}.
	In particular, for any $f\in K^*(X)$, we denote
	$(f)\coloneqq[\mathrm{div}(f)]\in Z^1(X)$. Such a divisor $(f)$ is called a \emph{principal divisor}. The set of principal divisors $\mathrm{Rat}^1(X)$ form a subgroup of $Z^1(X)$. We denote the quotient of $Z^1(X)$ by the subgroup of principal divisors by $\Cl(X) \coloneqq Z^1(X)/\mathrm{Rat}^1(X)$, called the \emph{class group} of $X$. We say that two divisors $Z, Z'$ are \emph{rationally equivalent} and write $Z\sim_{\mathrm{rat}} Z'$ if they have the same class in $\Cl(X)$.
\end{definition}

Recall, a $K$-analytic space $X$ is \emph{regular} at $x\in X$ if there is a good analytic domain $V$ of $X$ containing $x$ such that $\OO_{V,x}$ is regular. We say $X$ is \emph{regular} if $X$ is regular at every point $x\in X$. This is equivalent to that for any affinoid domain $V\simeq \mathcal{M}(A)\subset X$, we have that $A$ is regular, see \cite[Lemma-Definition~2.4.1, Lemma~2.4.5]{ducros2018families}.

\begin{proposition} 	\label{prop:cartierdivisorandweildivisor}
	The map	$[\cdot]\colon \Div(X)\rightarrow Z^1(X)$ is a homomorphism 
	which sends effective divisors to positive cycles. This induces a homomorphism
	\[[\cdot]\colon \CaCl(X)\rightarrow \Cl(X).\]
	If $X$ is normal (resp. regular), then these two maps are injective (resp. isomorphic).
\end{proposition}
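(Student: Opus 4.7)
The plan is to verify the four assertions in turn, in each case reducing to the scheme-theoretic counterpart on $\Spec(\OO_X(V))$ for a suitable affinoid domain $V\subset X$ via \cref{remark:cycle associated to a coherent sheaf}. For the homomorphism property, after refining a common G-covering by affinoids $U_i=\mathcal{M}(A_i)$ on which both $D_1$ and $D_2$ are represented by $a_{i,1}/b_{i,1},\, a_{i,2}/b_{i,2}\in K_X'(U_i)$, the sum $D_1+D_2$ is represented by $(a_{i,1}a_{i,2})/(b_{i,1}b_{i,2})$. The equality $\mathrm{mult}_Z(D_1+D_2)=\mathrm{mult}_Z(D_1)+\mathrm{mult}_Z(D_2)$ is checked on a single affinoid meeting $Z$, where it reduces to the additivity of length via the exact sequence $0\to A_{i,\mathfrak{p}}/(g)\xrightarrow{\cdot f}A_{i,\mathfrak{p}}/(fg)\to A_{i,\mathfrak{p}}/(f)\to 0$ for $f,g\in R(A_{i,\mathfrak{p}})$. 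Effectiveness is preserved since an effective local representative can be chosen with $a_i\in\OO_X(U_i)\cap R(\OO_X(U_i))$ and $b_i=1$, giving $\mathrm{length}_{A_{i,\mathfrak{p}}}(A_{i,\mathfrak{p}}/(a_i))\geq 0$. The descent to $\CaCl(X)\to \Cl(X)$ is immediate from $[\mathrm{div}(f)]=(f)\in \mathrm{Rat}^1(X)$.

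For injectivity when $X$ is normal, I would fix $D\in \Div(X)$ with $[D]=0$ and a G-covering by affinoids $U_i=\mathcal{M}(A_i)$ on which $D=\{(U_i,f_i)\}$ with $f_i\in K_X'(U_i)=\Frac(A_i)$. Normality of $X$ makes each $A_i$ a finite product of normal Noetherian domains, and the vanishing of $[D]|_{U_i}$ translates, by \cref{remark:cycle associated to a coherent sheaf}, into $\ord_\mathfrak{p}(f_i)=0$ for every height-one prime $\mathfrak{p}$ of $A_i$. The classical identity $A_i=\bigcap_{\mathrm{ht}(\mathfrak{p})=1}A_{i,\mathfrak{p}}$ for normal Noetherian domains then yields $f_i\in A_i^\times=\OO_X^*(U_i)$, so $D=0$ by \cref{lemma:localequivalence}. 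Injectivity on $\CaCl(X)\to \Cl(X)$ follows: if $[D]=(f)$ in $Z^1(X)$ then $[D-\mathrm{div}(f)]=0$, whence $D\sim \mathrm{div}(f)$ in $\CaCl(X)$.

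For surjectivity when $X$ is regular, I would fix a Weil divisor $\alpha\in Z^1(X)$ and a G-covering by affinoids $V_i=\mathcal{M}(A_i)$ with $A_i$ regular. On the Noetherian regular scheme $\mathcal{V}_i=\Spec(A_i)$ every Weil divisor is locally principal, so after refining each $V_i$ to a G-cover by affinoid Laurent/rational subdomains I may assume $\alpha|_{V_i}=(f_i)$ for some $f_i\in K_X'(V_i)$. On pairwise intersections, a further affinoid G-refinement to $W\subset V_i\cap V_j$ gives $(f_i/f_j)|_W=0$ in $Z^1(W)$; the injectivity proved above, applied to the regular (hence normal) $W$, forces $f_i/f_j\in \OO_X^*(W)$. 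Thus $\{(V_i,f_i)\}$ defines a Cartier divisor $D$ on $X$, and $[D]=\alpha$ by \cref{lemma:localequivalence}. Combined with injectivity, this yields the isomorphism $\CaCl(X)\simeq \Cl(X)$.

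The main technical obstacle I anticipate is the passage, in the surjectivity step, from local principality of Weil divisors on the schemes $\mathcal{V}_i$ to principality on an affinoid G-refinement of $V_i$: the principal opens $D(g)\subset \mathcal{V}_i$ trivializing $\alpha|_{V_i}$ in the scheme sense must be replaced by affinoid Laurent/rational domains of $V_i$ in a manner compatible with $\mathrm{mult}_Z$, and the cocycle $f_i/f_j$ must be forced into $\OO_X^*$ rather than merely $K_X^*$. Both points are resolved by combining \cref{remark:cycle associated to a coherent sheaf} with the injectivity from the normal case, which is why the two halves of the proposition are genuinely intertwined.
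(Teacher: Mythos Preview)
Your proposal is correct and follows essentially the same architecture as the paper: additivity and effectiveness are checked affinoid-locally, injectivity in the normal case reduces to the Krull--Serre identity $A=\bigcap_{\mathrm{ht}(\mathfrak{p})=1}A_\mathfrak{p}$ on each affinoid, and surjectivity in the regular case is obtained on affinoids from the scheme-theoretic isomorphism $\Div(\mathcal{V})\simeq Z^1(\mathcal{V})$ and then glued using the injectivity already established.

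The only substantive difference is in the surjectivity step. You make explicit the passage from principal Zariski-opens $D(g)\subset\Spec(A_i)$ to an affinoid G-refinement by Laurent/rational domains, and flag this as the main technical obstacle. The paper sidesteps this by invoking the injection $\Div(\Spec A)\hookrightarrow\Div(\mathcal{M}(A))$ recorded earlier (Remark after \cref{cartier divisors}), so that the scheme-theoretic Cartier divisor lifting $\alpha|_{V_i}$ is \emph{already} an analytic Cartier divisor on $V_i$, with no further refinement needed; the commutative square with $Z^1(\mathcal{X})\simeq Z^1(X)$ then finishes the affinoid case in one line. Your route is not wrong---the standard rational cover $\{V_i(g_1/g_k,\ldots,g_n/g_k)\}_k$ associated to generators $(g_1,\ldots,g_n)=A_i$ does the job---but the paper's packaging via $\Div(\mathcal{X})\hookrightarrow\Div(X)$ is cleaner and avoids the cocycle bookkeeping you anticipate.
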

\begin{proof}
	It is easy to see that $[\cdot]\colon \Div(X)\rightarrow Z^1(X)$ is a homomorphism and induces $[\cdot]\colon \CaCl(X)\rightarrow \Cl(X)$. If $X$ is normal, by Lemma~\ref{lemma:localequivalence}, to show $[\cdot]\colon \Div(X)\rightarrow Z^1(X)$ is injective, we can assume $X$ is affinoid. For $D\in \Div(X)$ such that $\mathrm{mult}_Z(D) = 0$ for any $Z\in Z^1(X)$, we take affinoid domain $V\subset X$ with $Z\cap V\not=\emptyset$ and $D|_V\in K_X'(V)$. Then $D|_V\in \OO_X^*(V)$ since $\mathrm{mult}_T(D|_V)=0$ for any $Q\in Z^1(V)$. This implies that $D=0$. As for the quotient, if $[D]=(f)$ for some $f\in K^*_X(X)$, then $D=\mathrm{div}(f)$, this implies that $[\cdot]\colon \CaCl(X)\rightarrow \Cl(X)$ is injective. 
	
	Assume that $X$ is regular. To show that  $[\cdot]\colon \Div(X)\rightarrow Z^1(X)$ is surjective, we firstly assume that $X=\mathcal{M}(A)$ is affinoid and set $\mathcal{X}=\Spec(A)$. In this case,  $\Div(\mathcal{X})\simeq Z^1(\mathcal{X})$,  see \cite[Proposition~7.2.16]{liu2006algebraic}. Hence, we have a commutative diagram
	\[\xymatrix{\Div(\mathcal{X})\ar@{^{(}->}[r] \ar[d]_{\rotatebox{90}{$\sim$}}&\Div(X)\ar[d]^\rho\\ 
		Z^1(\mathcal{X})\ar[r]^\sim & Z^1(X)},\]
	so our claim holds for affinoid spaces. We can glue Cartier divisors on affinoid domains together by injectivity of $[\cdot]$. Hence $[\cdot]\colon \Div(X)\rightarrow Z^1(X)$ is surjective.
\end{proof}

\subsection{Flat pull-backs}

\

We have introduced Cartier divisors, cycles. Next we consider their pull-backs via flat morphisms.

Recall the definition of flatness in sense of \cite[Definition~4.1.8]{ducros2018families}, a morphism $f\colon Y\rightarrow X$ of $K$-analytic spaces is \emph{naively flat} if for any $y\in {Y}$, there exists a good analytic domain $V\subset Y$ containing $y$ and a good analytic domain $U\subset X$ containing $f(V)$ such that $\OO_{V,y}$ is flat over $\OO_{U,f(y)}$. We say $f$ is \emph{flat} if moreover $Y'\coloneqq Y\times_XX'\rightarrow X'$ is naively flat for any morphism $X'\rightarrow X$. If $f$ is flat, then $\OO_Y(V)$ is flat over $\OO_X(U)$ for any affinoid domains $V\subset Y$ and $U\subset X$ with $f(V)\subset U$. The converse is not true in general unless $f$ is locally finite. Note that for any analytic domain $V$ of $X$, the natural morphism $V\hookrightarrow X$ is flat.

\begin{definition}
	A morphism $f\colon Y\rightarrow X$ of $K$-analytic spaces has \emph{relative dimension $r$} if for any $Z\in \overline{\mathrm{Irr}(X)}$, $f^{-1}(Z)=\emptyset$ or any irreducible component $Z'$ of $f^{-1}(Z)$ has $\dim_KZ'=\dim_KZ+r$.
\end{definition}
\begin{remark}
The notion of relative dimension $r$ is an analogue of the one in algebraic geometry, see \cite[B.2.5]{fulton1998intersection}. Our definition is different from the one in \cite[\S 1.4.13]{ducros2018families}. We don't assume that such morphisms are surjective.
\end{remark}

\begin{lemma} \label{lemma:relative dimension r}
	Let $f\colon Y\rightarrow X$ be a flat morphism of $K$-analytic spaces. Then $f$ has relative dimension $r$ if and only if $Y_x=\emptyset$ or $Y_x$ is of pure dimension $r$ for any $x\in X$. In particular, if $f\colon Y\rightarrow X$ is flat with $X, Y$ equidimensional, then f has relative dimension $\dim_KY-\dim_KX$.
\end{lemma}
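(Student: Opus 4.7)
The plan is to deduce this from the dimension formula for flat morphisms of $K$-analytic spaces (developed in Ducros' \emph{Families of Berkovich spaces}), which states that for a flat morphism $f\colon Y\to X$ and any $y\in Y$ with $x:=f(y)$, one has
\[\dim_yY=\dim_xX+\dim_yY_x,\]
where $Y_x=Y\times_X\mathcal{M}(\mathscr{H}(x))$ is viewed as an $\mathscr{H}(x)$-analytic space. I will take this formula as the core input and reduce both directions to a comparison between local dimensions at points and dimensions of irreducible components.

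For the direction ($\Rightarrow$), I would fix $x\in X$ with $Y_x\neq\emptyset$ and an arbitrary $y\in Y_x$, and argue $\dim_yY_x=r$. Because $f$ is flat it is open (locally on affinoid domains), so any irreducible component $Z'$ of $Y$ through $y$ maps into, and is actually an irreducible component of $f^{-1}(Z)$ for, a unique irreducible component $Z$ of $X$ that contains $x$. By hypothesis $\dim Z'=\dim Z+r$ for every such pair. Taking the maximum over components through $y$ and using $\dim_yY=\max_{Z'\ni y}\dim Z'$ and $\dim_xX=\max_{Z\ni x}\dim Z$ gives $\dim_yY-\dim_xX=r$; combining with the dimension formula yields $\dim_yY_x=r$. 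Since $y\in Y_x$ was arbitrary, $Y_x$ has pure dimension $r$.

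For the converse ($\Leftarrow$), I would fix an irreducible component $Z$ of $X$ with $f^{-1}(Z)\neq\emptyset$ and an irreducible component $Z'$ of $f^{-1}(Z)$, and pick a point $y\in Z'$ avoiding all other irreducible components of $f^{-1}(Z)$ and of $Y$, chosen so that its image $x=f(y)$ avoids all other components of $X$; this is possible because such bad loci are nowhere-dense Zariski-closed. At such a point $\dim_yZ'=\dim Z'$, $\dim_xZ=\dim Z$, and $\dim_yY_x=r$ by assumption. The dimension formula then gives $\dim Z'=\dim_yY=\dim_xX+r=\dim Z+r$, which is the definition of relative dimension $r$. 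A small subtlety here is the possibility that $Z'$ is not a component of $Y$ itself; but since flatness makes $f|_{f^{-1}(Z)}\colon f^{-1}(Z)\to Z$ the base change $Y\times_XZ\to Z$ of the flat map $f$, one may directly apply the dimension formula to $f|_{f^{-1}(Z)}$ and the point $y\in Z'\subset f^{-1}(Z)$.

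The ``in particular'' clause is then immediate: if $X$ and $Y$ are both equidimensional, then for any $y\in Y$ with $x=f(y)$ we have $\dim_yY=\dim_KY$ and $\dim_xX=\dim_KX$, so the dimension formula forces $\dim_yY_x=\dim_KY-\dim_KX$ pointwise; hence every non-empty fibre $Y_x$ is equidimensional of dimension $\dim_KY-\dim_KX$, and the main equivalence finishes the argument. The principal obstacle is simply justifying the dimension formula and the correspondence between irreducible components of $Y$ (resp.\ $f^{-1}(Z)$) through a generic point and components of $X$ (resp.\ $Z$); both rely on flatness being open and on Ducros' foundational results, and neither involves the more delicate key lemma \cref{key lemma for a local property to be global on irreducible subsets}.
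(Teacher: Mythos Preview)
Your proposal is essentially the same as the paper's proof: both rest entirely on Ducros' dimension formula $\dim_yY=\dim_xX+\dim_yY_x$ for flat $f$, and both handle the converse by passing to the flat base change $f^{-1}(Z)\to Z$ (the paper phrases this as ``without loss of generality $Z=X$'' and then reads off $\dim_yY=\dim_KX+r$ for every $y$, while you first try a generic-point argument and then retreat to the same base-change reduction).

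One small point to tighten: the paper's definition of \emph{relative dimension $r$} quantifies over all $Z\in\overline{\mathrm{Irr}(X)}$, i.e.\ over arbitrary irreducible Zariski-closed subsets, not just irreducible components of $X$. Your ($\Leftarrow$) argument as written only fixes ``an irreducible component $Z$ of $X$''. This is not a real obstacle, since your own base-change remark applies verbatim to any irreducible closed $Z$ (the map $f^{-1}(Z)\to Z$ is still flat and its fibre over $x\in Z$ coincides with $Y_x$), but you should state it at that generality. Also, the ``unique irreducible component $Z$'' claim in your ($\Rightarrow$) direction is not literally true and not needed; what you actually use is that every component of $Y$ through $y$ is a component of some $f^{-1}(Z)$ with $Z\ni x$, and conversely for each $Z\ni x$ the point $y$ lies in some component of $f^{-1}(Z)$, which is enough to match the two maxima.
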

\begin{proof}
	We apply \cite[Lemma~4.5.11]{ducros2018families} saying that $\dim_yY=\dim_yY_x+\dim_xX$ for any $y\in Y_x$.

	Assume that $f$ has relative dimension $r$. If $x\in X$ such that $Y_x\not=\emptyset$, then for any $Z\in {\mathrm{Irr}(X)}$ containing $x$, we have $\dim_Kf^{-1}(Z)-\dim_KZ=r$. This implies that $\dim_yY_x=\dim_yY-\dim_xX=r$ for any $y\in Y_x$ since $\dim_xX=\max\limits_{x\in Z\in\mathrm{Irr}(X)}\{\dim_KZ\}$.
	
	Conversely, for any $Z\in \overline{\mathrm{Irr}(X)}$ with $f^{-1}(Z)\not=\emptyset$, without loss of generality, we can assume that $Z=X$. We take $y\in Y$ and $x=f(y)$. Then $\dim_yY=\dim_xX+\dim_yY_x=\dim_KX+r$. This implies that $f$ has relative dimension $r$.
	
	If $X, Y$ are equidimensional, then $\dim_yY_x=\dim_yY-\dim_xX$ implies that $Y_x$ is of equidimension for any $y\in Y, x=f(x)$.
\end{proof}

\begin{definition}\label{def:flatpullback}
	Let $f\colon Y\rightarrow X$ be a flat morphism of $K$-analytic spaces. 
	\begin{enumerate}
		\item [(1)] The canonical morphism $f^\#\colon \OO_X\rightarrow f_*\OO_Y$ extends to a morphism $f^\#\colon K_X^*/\OO_X^*\rightarrow f_*(K_Y^*/\OO_X^*)$, then we have a homomorphism
		\[f^*\colon \Div(X)\rightarrow \Div(Y).\]
		This will induce a homomorphism $f^*\colon \CaCl(X)\rightarrow \CaCl(Y)$.
		\item [(2)] Assume that $f$ has relative dimension $r$. For any integral closed subspace $Z\subset X$ of pure codimension $k$, we set 
		\[f^*[Z]\coloneqq [f^{-1}(Z)]\in Z^k(Y).\] 
		This extends by linearity to a pull-back homomorphism $f^*\colon Z^k(X)\rightarrow Z^k(Y)$. 
	\end{enumerate}
\end{definition}
\begin{remark} \label{remark:flatpullback}
The flat pull-backs are functorial and we have a commutative diagram
		\[\xymatrix{\Div(X)\ar[r]^{f^*}\ar[d]_{[\cdot]}& \Div(Y)\ar[d]^{[\cdot]}\\
			Z^1(X) \ar[r]_{f^*}& Z^1(Y)}.\]
\end{remark}

\begin{proposition}	\label{pull-back of cycles from coherent sheaf}
	Let $f\colon Y\rightarrow X$ be a flat morphism of $K$-analytic spaces that has relative dimension $r$. For a coherent sheaf $\mathcal{F}$ on $X$  with $\codim(\Supp(\mathcal{F}),X)\geq k$, we have $\codim(\Supp(f^*\mathcal{F}),X)\geq k$ and
	\[[f^*\mathcal{F}]^k=f^*[\mathcal{F}]^k.\]
	In particular, if $Z$ is a closed analytic subspace of $X$ of pure codimension $k$, then $f^*[Z] = [f^{-1}(Z)]$.
\end{proposition}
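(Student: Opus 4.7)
The plan is to separate the codimension bound from the equality of cycles, and then reduce the latter to the analogous statement for flat morphisms of Noetherian affine schemes via the comparison in Remark~\ref{remark:cycle associated to a coherent sheaf}.

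First I would handle the codimension. For a flat morphism of analytic spaces one has $\Supp(f^*\mathcal{F}) = f^{-1}(\Supp(\mathcal{F}))$ as analytic subspaces. Since $X$ and $Y$ are of pure dimension and $f$ is flat, the preceding lemma gives that $f$ has relative dimension $r = \dim Y - \dim X$. Hence for any irreducible component $Z$ of $\Supp(\mathcal{F})$ and any irreducible component $W$ of $f^{-1}(Z)$, $\dim W = \dim Z + r$, so $\codim(W,Y) = \codim(Z,X) \geq k$, whence $\codim(\Supp(f^*\mathcal{F}),Y) \geq k$ (I read the statement as asserting this; the $X$ in the excerpt appears to be a typo).

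Next I would reduce the equality $[f^*\mathcal{F}]^k = f^*[\mathcal{F}]^k$ to the affinoid case. By Lemma~\ref{lemma:localequivalence} it suffices to verify it on members of a G-covering of $Y$ by affinoid domains. Choose a G-covering $X = \bigcup_i U_i$ by affinoid domains and, for each $i$, a G-covering of $f^{-1}(U_i)$ by affinoid domains $V_{ij}$. The composites $V_{ij}\hookrightarrow f^{-1}(U_i)\to U_i$ are flat morphisms of affinoids, so after relabeling we may assume $X=\mathcal{M}(A)$, $Y=\mathcal{M}(B)$ with $f$ induced by a flat homomorphism $\varphi\colon A\to B$. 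Put $\mathcal{X}=\Spec A$, $\mathcal{Y}=\Spec B$, and let $f^{\mathrm{al}}\colon\mathcal{Y}\to\mathcal{X}$ be the associated scheme morphism. By Remark~\ref{remark:cycle associated to a coherent sheaf}, under the identifications $Z^*(X)\simeq Z^*(\mathcal{X})$ and $Z^*(Y)\simeq Z^*(\mathcal{Y})$, $[\mathcal{F}]^k$ corresponds to $[\mathcal{F}^{\mathrm{al}}]^k$ and $[f^*\mathcal{F}]^k$ to $[\mathcal{F}^{\mathrm{al}}\otimes_A B]^k$. Moreover the analytic flat pullback of Definition~\ref{def:flatpullback} corresponds to the scheme-theoretic flat pullback: both send a prime cycle $[V(\mathfrak{p})]$ to $\sum_{\mathfrak{q}}\mathrm{length}_{B_\mathfrak{q}}\bigl((B/\mathfrak{p}B)_\mathfrak{q}\bigr)[V(\mathfrak{q})]$, where $\mathfrak{q}$ ranges over the minimal primes of $\mathfrak{p}B$; this is exactly the description of multiplicities that came out of the proof of Lemma~\ref{multiplicity of a coherent sheaf}. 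So the claim is reduced to the identity $[\mathcal{F}^{\mathrm{al}}\otimes_A B]^k = (f^{\mathrm{al}})^*[\mathcal{F}^{\mathrm{al}}]^k$ for a flat homomorphism of Noetherian rings.

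The main obstacle is precisely this algebraic identity. The standard route is to write $M:=\Gamma(X,\mathcal{F})$ as an iterated extension of modules of the form $A/\mathfrak{p}$ via a prime filtration, invoke additivity of $[\cdot]^k$ under short exact sequences (which rests on additivity of length), and so reduce to the case $M=A/\mathfrak{p}$. For this case one applies the associativity of length under flat base change to express $\mathrm{length}_{B_\mathfrak{q}}((B/\mathfrak{p}B)_\mathfrak{q})$ in terms of $\mathrm{length}_{A_\mathfrak{p}}(A_\mathfrak{p}/\mathfrak{p}A_\mathfrak{p})=1$ and fiber lengths. The last statement of the proposition then follows by specializing to $\mathcal{F}=\OO_Z$, since $f^*\OO_Z = \OO_{f^{-1}(Z)}$ for a closed immersion $Z\hookrightarrow X$ pulled back along a flat morphism.
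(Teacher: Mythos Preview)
Your proposal is correct and follows essentially the same approach as the paper: reduce to the affinoid case via Lemma~\ref{lemma:localequivalence}, then invoke the identification of Remark~\ref{remark:cycle associated to a coherent sheaf}~(1) to transfer the question to flat morphisms of Noetherian affine schemes, where the result is standard (the paper simply cites \cite[Lemma~42.14.4~(2)]{stacks-project}). Your write-up is more explicit than the paper's---you spell out the codimension bound, the compatibility of the two flat pullbacks under $Z^*(V)\simeq Z^*(\mathcal{V})$, the prime-filtration argument for the algebraic identity, and the derivation of the final sentence from $\mathcal{F}=\OO_Z$---but none of this deviates from the paper's route.
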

\begin{proof}
	We can reduce the statement to the case of affinoid spaces by \cref{lemma:localequivalence}, then the proposition from the analogue result in scheme theory by \cref{remark:cycle associated to a coherent sheaf}. For the result in scheme theory, see proof of \cite[Lemma~42.14.4~(2)]{stacks-project}.
\end{proof}

\subsection{Proper push-forward of cycles}

\

For an affinoid space $X=\mathcal{M}(A)$, it may happen that $\dim_{\mathrm{Krull}}A< \dim_KX$. In order to avoid this dimension problem, we assume that all $K$-analytic spaces (including affinoid domains) in this subsection are strict. In this case $\dim_{\mathrm{Krull}}A=\dim_KX$.

Recall a theorem of Kiehl. 

\begin{theorem}[\cite{berkovich1990spectral}~Proposition~3.3.5] \label{theorem:Kiehl}
	Let $f\colon Y\rightarrow X$ be a proper morphism of $K$-analytic spaces, and $\mathcal{F}$ a coherent $\OO_Y$-module. Then $R^nf_*\mathcal{F}, n\geq 0$, are coherent $\OO_X$-modules. 
	In particular, we have Remmert's mapping theorem, saying that $f(Y)$ is an Zariski-closed subset of $X$.
\end{theorem}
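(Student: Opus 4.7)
The plan is to reduce to an affinoid base and then exploit the properness condition via two nested finite affinoid coverings of $Y$, reducing coherence to a finiteness statement for a Čech complex that can be handled by Kiehl's criterion. First, since coherence of $R^n f_*\mathcal{F}$ is a local property on $X$ with respect to the G-topology, I would assume $X=\mathcal{M}(A)$ is $K$-affinoid. Via the Leray/base-change identification (together with the fact that for an affinoid base the sections of the higher direct image are computed by cohomology of the total space on a suitable cover), it suffices to prove that $H^n(Y,\mathcal{F})$ is a finitely generated $A$-module, and that this finiteness is compatible with passing to affinoid subdomains of $X$ (which will yield the sheaf statement).

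Next, I would invoke the analytic definition of properness: since $f:Y\to X$ is proper and $X$ is affinoid, one can find two finite G-coverings $Y=\bigcup_{i\in I}V_i=\bigcup_{i\in I}V_i'$ by affinoid domains with $V_i\Subset_X V_i'$ (relatively compact over $X$). By Tate's acyclicity theorem for coherent sheaves on affinoid spaces, the Čech complexes $\check{C}^\bullet(\{V_i\},\mathcal{F})$ and $\check{C}^\bullet(\{V_i'\},\mathcal{F})$ both compute $H^*(Y,\mathcal{F})$, and the restriction map between them is a quasi-isomorphism of complexes of Banach $A$-modules. The key analytic input is that the restriction $\mathcal{F}(V_{i_0}'\cap\cdots\cap V_{i_p}')\to \mathcal{F}(V_{i_0}\cap\cdots\cap V_{i_p})$ is \emph{inner} (completely continuous) over $A$ because of the relative compactness $V_{i_j}\Subset_X V_{i_j}'$.

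The main obstacle, and the heart of the argument, is then Kiehl's finiteness lemma for Banach complexes: if $u^\bullet:L^\bullet\to M^\bullet$ is a quasi-isomorphism of bounded complexes of Banach $A$-modules which is completely continuous in each degree, and if the cohomology of $M^\bullet$ is a Banach $A$-module, then each $H^n(M^\bullet)$ is a finitely generated $A$-module. Applying this to our two Čech complexes yields finiteness of $H^n(Y,\mathcal{F})$. To turn this into coherence of $R^nf_*\mathcal{F}$ as an $\mathcal{O}_X$-module, I would check that for an affinoid subdomain $U=\mathcal{M}(A')\subset X$, the formation of the Čech complex commutes with the base change $A\to A'$ (using flatness of affinoid localizations and the compatibility of properness with base change), so that the finitely generated $A$-module $H^n(Y,\mathcal{F})$ yields the correct sections over every $U$.

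Finally, Remmert's mapping theorem is a quick corollary: by the coherence just established, $f_*\mathcal{O}_Y$ is a coherent $\mathcal{O}_X$-module, and its support $Z:=\Supp(f_*\mathcal{O}_Y)$ is a Zariski-closed analytic subset of $X$. One checks $f(Y)=Z$ as point-sets by working locally on affinoid domains (where a point $x\in X$ has empty fiber iff the stalk of $f_*\mathcal{O}_Y$ vanishes at $x$, by the coherent Nakayama-type argument together with properness of the fibers).
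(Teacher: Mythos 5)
The paper gives no proof of this statement: it is recalled with a citation to \cite[Proposition~3.3.5]{berkovich1990spectral} and used as a black box, so there is no internal argument to compare yours against. Your sketch is, in outline, exactly the classical Kiehl argument underlying the cited result: reduce to an affinoid base $X=\mathcal{M}(A)$, produce two finite affinoid coverings with $V_i\Subset_X V_i'$, note that the restriction maps between the two \v{C}ech complexes are completely continuous over $A$, apply the non-archimedean Schwartz/Kiehl finiteness lemma, and deduce Remmert's theorem from coherence of $f_*\mathcal{O}_Y$ via its support together with proper base change. Two points deserve flagging. First, the step ``since $f$ is proper and $X$ is affinoid, one can find two finite G-coverings with $V_i\Subset_X V_i'$'' is not a formality: Berkovich's notion of properness (topologically proper and boundaryless) does not obviously produce such nested coverings, and the equivalence with Kiehl's covering-theoretic properness was a genuinely difficult problem (settled in general only by Temkin); if you instead take Kiehl's definition as the starting point, the step is a tautology but you then owe the comparison of definitions, which is precisely what Berkovich's proof has to negotiate. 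Second, your statement of the finiteness lemma is looser than what is actually proved and needed: Kiehl's induction is degree-by-degree and requires perturbing the completely continuous comparison map into a surjection with finitely generated cokernel, so ``quasi-isomorphism plus completely continuous in each degree implies finitely generated cohomology'' is the right heuristic rather than the usable statement. Neither point is a fatal gap in a proposal for a quoted classical theorem, but both are where the real work lives.
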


We have the following equivalent conditions.

\begin{lemma} \label{lemma:equivalentconditionforsurjectivefinite}
	Let  $f\colon Y\rightarrow X$ be a morphism of separated, strictly $K$-analytic spaces. Then the following statements are equivalent.
	\begin{enumerate}
		\item [(i)] $f$ is surjective and finite.
		\item[(ii)] $f$ is proper, and for any $x\in X$, $\dim_{\mathscr{H}(x)}Y_x=0$. 
		\item[(iii)] $f$ is proper, and for any rigid point $x\in X$, $Y_x\not=\emptyset$ has finite rigid points as an $\mathscr{H}(x)$-analytic space.
	\end{enumerate}
\end{lemma}
\begin{proof}
	Obviously, (i) implies (ii), and (ii) implies (iii). 
	It remains to show that (iii) implies (i). The separatedness ensure that $X, Y$ are also rigid $K$-analytic spaces, see \cite[Theorem~1.6.1]{berkovich1993etale}. Then the result is from \cite[Corollary~9.6.6]{bosch1984nonarchimedean} and \cite[Proposition~3.3.2]{berkovich1990spectral}.

\end{proof}

\begin{art} \label{abhyankar points}
	Recall an Abhyankar point is a point $x\in X$ such that $d_K(x) = \dim_{x}X$. If $X$ is irreducible, then $\overline{\{x\}}^{X_{\zar}}=X$. If $X$ is reduced, then $\OO_{X,x}$ is a field. Indeed, by \cite[Corollary~3.2.9]{ducros2018families}, we have that $\dim_KX=\dim_xX\leq \dim_K\overline{\{x\}}^{X_{\zar}}\leq\dim_KX$, so $\overline{\{x\}}^{X_{\zar}}=X$ if $X$ is irreducible. By \cite[Example~3.2.10]{ducros2018families}, we have that $\OO_{X,x}$ is Artinian, so is a field if $X$ is reduced. 
\end{art}

\begin{proposition} \label{proposition:proper equidimension}
	Let $f\colon Y\to X$ be a surjective, proper morphism of irreducible, separated, strictly $K$-analytic spaces. Then the following are equivalent.
	\begin{enumerate}
		\item [(i)] $\dim_K(X)=\dim_K(Y)$. 
		\item [(ii)] There is a non-empty Zariski-open subset $U\subset X$ such that $f\colon f^{-1}(U)\to U$ is finite.
		\item[(iii)] There is an Abhyankar point $x\in X$ such that $\dim_{\mathscr{H}(x)}Y_x=0$.
		\item[(iv)] There is a point $x\in X$ such that $\dim_{\mathscr{H}(x)}Y_x=0$.
	\end{enumerate}
	In this case, we say that $f$ is \emph{generically finite}.
\end{proposition}
\begin{proof}
	The equivalence of (i) and (iii) is \cite[Lemma~1.5.11]{ducros2018families}. It is obviously that (ii) implies (iii), and (iii) implies (iv). It is remains to show that (iii) implies (ii), and (iv) implies (iii).
	
	Assume that (iii) holds. By \cite[Theorem~10.7.5~(2)]{ducros2018families}, there is a maximal Zariski-open subset $U$ of $X$ such that $f^{-1}(U)\to U$ is flat and $\dim_{\mathscr{H}(x)}Y_x=0$ for any $x\in U$. The set $U$ is not empty by our assumption and \cite[Theorem~10.3.7]{ducros2018families}. By \cref{lemma:equivalentconditionforsurjectivefinite}, we know that $f^{-1}(U)\to U$ is finite. Hence (ii) holds.
	
	Assume that (iv) holds. By \cite[TH\'EOR\`EME~4.9]{ducros2007variation}, the set $U\coloneq\{y\in Y\mid \dim_yY_{f(y)}=0\}$ is Zariski-open in $Y$. Set $F=Y\setminus U$, then $f(F)\subset X$ is Zariski-closed. Notice that $f(F)=X$, i.e. $\dim_{\mathscr{H}(x)}Y_x\geq1$ for any $x\in X$, contracts with (iv), so $f(F)\neq X$. Hence there is an Abhyankar point $x\in X$ such that $\dim_{\mathscr{H}(x)}Y_x=0$, i.e. (iii) holds.
\end{proof}
\begin{remark} \label{remark:equidimension}
	If the equivalent conditions in \cref{proposition:proper equidimension} hold, then any Abhyankar point $x\in X$ is contained in $U$ in (ii) (see \cref{abhyankar points}), and the fiber $Y_x$ is consisted of finite isolated rigid points, and any $y\in Y_x$ is an Abhyankar point of $Y$ with  $\OO_{Y,y}=\OO_{Y_x,y}=\mathscr{H}(y)$. Indeed, if $\dim_KX=\dim_KY$, for any $y\in Y_x$, we have that $d_K(y) = d_K(x)+d_{\mathscr{H}(x)}(y)=\dim_KX+d_{\mathscr{H}(x)}(y)\leq \dim_KY$, then $d_K(y) = \dim_KY=\dim_K(Y,y)$ and $d_{\mathscr{H}(x)}(y)=0$. Hence $y\in Y$ is an Abhyankar point and $\OO_{Y,y}=\mathscr{H}(y)$. Since $Y_x$ is proper over $\mathscr{H}(x)$, so $Y_x$ is consisted of finite rigid points. The points are isolated since $Y$ is separated. On the other hand, since $\mathscr{H}(y)=\OO_{Y,y}\to\OO_{Y_x,y}$ is regular by \cite[Theorem~6.3.7]{ducros2018families}, so $\OO_{Y_x,y}$ is a field and $\OO_{Y_x,y}=\mathscr{H}(y)$.
\end{remark}

A similar result of the following lemma is given in \cite[\S~2.6]{gubler1998local}.

\begin{lemma} 	\label{lemma:welldefinedofdegree}
	Let $f\colon Y\rightarrow X$ be a surjective, proper morphism of integral, separated, strictly $K$-analytic spaces with $\dim_K(X)=\dim_K(Y)$. Let $x\in X$ be an Abhyankar point. Then the number 
		\[\deg(Y/X)\coloneqq\sum\limits_{y\in Y_x}[\mathscr{H}(y)\colon\mathscr{H}(x)]\]
		is independent of the choice of $x$, called the \emph{generic degree of $f$}.
\end{lemma}
\begin{proof}
	By \cref{proposition:proper equidimension} and \cref{remark:equidimension}, we can assume that $f\colon Y\to X$ is finite. 
	For any (strictly) affinoid domain $V\subset X$ and $T\in \mathrm{Irr}(V)$, set 
			\[d(V,T)\coloneq\sum\limits_{\substack{Q\in\mathrm{Irr}(f^{-1}(V))\\ f(Q)=T}}[\Frac(A_Q)\colon \Frac(A_T)].\]
	where $A_T, A_Q$ are the affinoid algebras corresponding to $T, Q$ with reduced structure. If $x\in T$, set $V=\mathcal{M}(A)$ and $f^{-1}(V)=\mathcal{M}(B)$. Notice that $x\in T$ is mapped to the generic point of $\Spec(A_T)$ via $T\to \Spec(A_T)$, so $\Frac(A_T)\subset \mathscr{H}(x)$, and 
	\begin{align*}
		\bigoplus_{y\in Y_x}\mathscr{H}(y)=&B\otimes_A\mathscr{H}(x) \\
		=&  (B\otimes_A\Frac(A_T))\otimes_{\Frac(A_T)}\mathscr{H}(x)\\
		=& \bigoplus\limits_{\substack{Q\in\mathrm{Irr}(f^{-1}(V))\\ f(Q)=T}}\Frac(A_Q)\otimes_ {\Frac(A_T)}\mathscr{H}(x).
	\end{align*} Hence $d(V,T) = \sum\limits_{y\in Y_x}[\mathscr{H}(y)\colon\mathscr{H}(x)]$. It remains to show that $d(V,T)$
	is independent of the choices of $V$ and $T$.
	
We first consider the case where $V, W\subset X$ are affinoid domains with $W\subset V$, and any $T\in \mathrm{Irr}(V)$, $T'\in \mathrm{Irr}(W)$, and show that $d(V,T)=d(W,T')$.
This is in fact from \cref{multiplicity of a coherent sheaf} and \cref{pull-back of cycles from coherent sheaf} for affinoid case. Let $V=\mathcal{M}(A), f^{-1}(V) = \mathcal{M}(B)$ and $W=\mathcal{M}(A')$, then $f^{-1}(W)=\mathcal{M}(B')$, where $B'=A'\otimes_AB$. Let $\mathcal{F}$ be the corresponding coherent sheaf associated to $B$ as an $A$-module on $V$, and $i\colon W\rightarrow V$ the canonical morphism, then 
\[[\mathcal{F}]^0= \sum\limits_{T\in\mathrm{Irr}(V)}(\sum\limits_{\substack{Q\in\mathrm{Irr}(f^{-1}(V))\\ f(Q)=T}}[\Frac(A_Q)\colon \Frac(A_T)])[T],\]
and we know that $\sum\limits_{\substack{Q\in\mathrm{Irr}(f^{-1}(V))\\ f(Q)=T}}[\Frac(A_Q)\colon \Frac(A_T)]$ is independent of the choice of $T$ by \cref{multiplicity of a coherent sheaf}. We also have
\[i^*[\mathcal{F}]^0=\sum\limits_{T\in\mathrm{Irr}(V)}(\sum\limits_{\substack{Q\in\mathrm{Irr}(f^{-1}(V))\\ f(Q)=T}}[\Frac(A_Q)\colon \Frac(A_T)])\sum\limits_{T'\in\mathrm{Irr}(T\cap W)}[T'],\]
\[[i^*\mathcal{F}]^0= \sum\limits_{T'\in\mathrm{Irr}(W)}(\sum\limits_{\substack{Q'\in\mathrm{Irr}(f^{-1}(W))\\ f(Q')=T'}}[\Frac(A_{Q'})\colon \Frac(A_{T'})])[T'].\] By \cref{pull-back of cycles from coherent sheaf}, we have that $i^*[\mathcal{F}]=[i^*\mathcal{F}]$. For any $T'\in \mathrm{Irr}(W)$, we compare the coefficients of $[T']$, then $d(V,T)=d(W,T')$.

To show that $d(V,T)$
is independent of the choices of $V$ and $T$, we apply Lemma~\ref{key lemma for a local property to be global on irreducible subsets}. Let $m=d(V_0,T_0)$ for some affinoid domain $V_0\subset X$, where $T_0\in\mathrm{Irr}(V_0)$. For $V$ given as before, we say that an irreducible component $T\in\mathrm{Irr}(V)$ satisfies $P$ if $d(V,T)=m$. By our discussion above, we see that $P$ satisfies the hypothesis in Lemma~\ref{key lemma for a local property to be global on irreducible subsets}. Then there are Zariski-closed subset $X_P^+, X_P^-$ of $X$ such that 
\[X_P^+\cap V = \bigcup\limits_{\substack{T\in \mathrm{Irr}(V),\\ T \text{ satisfies $P$}}}T,\] \[X_P^-\cap V = \bigcup\limits_{\substack{T\in \mathrm{Irr}(V),\\ \text{$T$ doesn't satisfy $P$}}}T,\]
and $X=X_P^+\cup X_P^-$. Since $X$ is irreducible, and there is some $T \subset X_P^+$, so $X=X_P^+$. This proves (3).
\end{proof}

With the lemmas above, we have the following definition.

\begin{definition}
	Let  $f\colon Y\rightarrow X$ be a proper morphism of separated, strictly $K$-analytic spaces. For any irreducible closed subspace $Z$ of $Y$, the image $f(Z)$ is a Zariski-closed subset of $Y$. We set 
	\[\deg(Z/f(Z))\coloneqq\begin{cases}
		\text{the generic degree of $f\colon Z\rightarrow f(Z)$}& \text{ if $\dim_Kf(Z) = \dim_KZ$;}\\
		0 & \text{ if $\dim_Kf(Z)<\dim_KZ$}
	\end{cases}\]
	Define $f_*[Z]\coloneqq \deg(Z/f(Z))[f(Z)]$, then extends $f_*$ linearly to a homomorphism (of gradding groups)
	\[f_*\colon Z_*(Y)\to Z_*(X).\]
\end{definition}
\begin{remark}
For $Z$ above, we know that $f(Z)$ with the reduced subspace structure is the Zariski image of $Z\rightarrow X$ by \cref{lemma:zariskiimageforreducedspace}.
\end{remark}

We can easily prove the following lemma.

\begin{lemma}
	Let $f\colon Y\rightarrow X$ and $g\colon Z\rightarrow Y$ be proper morphism of separated strictly $K$-analytic spaces. Then $g_*\circ f_*=(g\circ f)_*$.
\end{lemma}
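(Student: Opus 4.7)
The statement is the functoriality of proper push-forward: reading the composition consistently with the arrows (so $g \colon Z \to Y$ and $f \colon Y \to X$), we want $(f \circ g)_* = f_* \circ g_*$ as homomorphisms $Z_*(Z) \to Z_*(X)$. Composition of proper morphisms is proper, so $(f \circ g)_*$ is defined in the first place. My plan is to reduce by $\Z$-linearity to a single prime cycle $[W]$ where $W$ is an integral closed analytic subspace of $Z$. Because $W$ is reduced, \cref{lemma:zariskiimageforreducedspace} identifies $\Im_\zar(g|_W)$ and $\Im_\zar((f\circ g)|_W)$ with the set-theoretic images equipped with their reduced structures. Restricting $Z$ to $W$ and replacing $Y,X$ by these images, I may assume $Z$ is integral and that both $g$ and $f$ are surjective onto integral targets.

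Next I would argue by a case split based on dimensions. If $\dim_K g(Z) < \dim_K Z$, then $g_*[Z] = 0$ by definition of proper push-forward, hence $(f_* \circ g_*)[Z] = 0$; and since $\dim_K (f \circ g)(Z) \leq \dim_K g(Z) < \dim_K Z$ we also have $(f \circ g)_*[Z] = 0$. The case $\dim_K Z = \dim_K g(Z) > \dim_K (f \circ g)(Z)$ is analogous, since then $f_*[g(Z)] = 0$ forces both sides to vanish. The only nontrivial case is $\dim_K Z = \dim_K g(Z) = \dim_K (f \circ g)(Z)$; by \cref{lemma:equivalentconditionforsurjectivefinite}, each of $g$, $f|_{g(Z)}$, and $f\circ g$ is a surjective finite morphism of integral, separated, strictly $K$-analytic spaces. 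The identity then reduces to the multiplicativity of degree, $\deg(f \circ g) = \deg(f)\cdot\deg(g)$.

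This multiplicativity is the main obstacle, and I would prove it directly from \cref{lemma:welldefinedofdegree}. Fix a strictly affinoid domain $V \subset X$ and an irreducible component $T$ of $V$. Since $(f \circ g)^{-1}(V) = g^{-1}(f^{-1}(V))$, any $Q \in \mathrm{Irr}((f \circ g)^{-1}(V))$ mapping onto $T$ projects under $g$ onto a unique $P \in \mathrm{Irr}(f^{-1}(V))$ with $f(P) = T$, namely $P = \overline{g(Q)}^{\zar}$. Grouping the defining sum of $\deg(f \circ g)$ by $P$ and applying the tower formula for finite field extensions
\[[\Frac(A_Q) : \Frac(A_T)] = [\Frac(A_Q) : \Frac(A_P)] \cdot [\Frac(A_P) : \Frac(A_T)],\]
the outer sum over $P$ produces $\deg(f)$, while the inner sum $\sum_{Q \mapsto P}[\Frac(A_Q):\Frac(A_P)]$ equals $\deg(g)$ independently of $P$ by \cref{lemma:welldefinedofdegree} applied to $g$ on the affinoid $f^{-1}(V)$ with component $P$. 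The delicate point is the book-keeping, namely that the irreducible components of $g^{-1}(P)$ which dominate $P$ correspond bijectively to the $Q \in \mathrm{Irr}((f \circ g)^{-1}(V))$ lying over $P$ with $(f\circ g)(Q) = T$; this is ensured by the integrality of $Z,Y,X$ and the dimension equalities established in the previous paragraph.
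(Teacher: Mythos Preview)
Your argument is correct. The paper provides no proof for this lemma beyond the sentence ``We can easily prove the following lemma,'' so there is nothing to compare against; you have supplied exactly the kind of argument the paper elides. The reduction by linearity to an integral $W$, the replacement of $Y,X$ by the reduced images $g(W),f(g(W))$ via \cref{lemma:zariskiimageforreducedspace}, and the dimension case-split are all standard and valid (note that the restrictions $W\to g(W)$ and $g(W)\to f(g(W))$ remain proper by the usual cancellation property for properness, cf.\ \cite[Proposition~9.6.4]{bosch1984nonarchimedean}).

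Two minor comments on the multiplicativity step. First, the bijection you call the ``delicate point'' is actually not needed: \cref{lemma:welldefinedofdegree} applied to $g$ with the affinoid domain $f^{-1}(V)\subset Y$ and component $P$ already sums over those $Q\in\mathrm{Irr}(g^{-1}(f^{-1}(V)))=\mathrm{Irr}((f\circ g)^{-1}(V))$ with $g(Q)=P$, which is precisely your inner sum; no passage through $\mathrm{Irr}(g^{-1}(P))$ is required. Second, to know that $g(Q)$ really is an irreducible \emph{component} of $f^{-1}(V)$ (rather than a smaller closed subset), you implicitly use that $Z$ and $Y$ are integral so that every irreducible component of an affinoid domain has full dimension, together with $\dim_K g(Q)=\dim_K Q$ for the finite map $Q\to g(Q)$. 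This is true but worth making explicit.
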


\begin{proposition} \label{prop:pushforwordofcoherentsheaves}
	Let $f\colon Y\rightarrow X$ be a proper morphism of separated strictly $K$-analytic spaces.
	\begin{enumerate}
		\item [(1)] Let $Z\subset Y$ be a closed subspace with $\dim_KZ\leq k$. Then
		\[f_*[Z]_k=[f_*\OO_Z]_k.\]
		\item[(2)] Let $\mathcal{F}$ be a coherent sheaf on $Y$ such that $\dim_K(\Supp(\mathcal{F}))\leq k$. Then
		\[f_*[\mathcal{F}]_k=[f_*\mathcal{F}]_k.\]
	\end{enumerate}
\end{proposition}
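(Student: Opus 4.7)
The plan is to derive part (1) from part (2) and then to prove (2) by reducing first to an affinoid base and then to the scheme-theoretic statement of \cite[Lemma~42.12.4]{stacks-project}. For part (1), applying (2) to $\mathcal{F}=\OO_Z$ yields $f_*[\OO_Z]_k=[f_*\OO_Z]_k$; but $\Supp(\OO_Z)=Z$, and by \cref{cycle associated to closed analytic subspace} the cycle $[Z]_k$ is by definition $[\OO_Z]_k$, so the identity (1) follows.

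For part (2), both sides are cycles on $X$, so by \cref{lemma:localequivalence} it suffices to establish the equality after restriction to every affinoid domain $V\subset X$. The inclusion $g\colon V\hookrightarrow X$ is flat, and its base change $g'\colon f^{-1}(V)\hookrightarrow Y$ is also flat. By \cref{prop:flatbasechangeofcycles}, we have $g^*\circ f_*=f'_*\circ g'^*$ on cycles, and by the usual flat base change for proper direct images of coherent sheaves (a standard consequence of Kiehl's theorem and the flatness of $g$), we have $g^*f_*\mathcal{F}\simeq f'_*g'^*\mathcal{F}$. Combining these with \cref{pull-back of cycles from coherent sheaf}, which gives $g'^*[\mathcal{F}]_k=[g'^*\mathcal{F}]_k$, the equality on $X$ is equivalent to the equality on each $V$. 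Hence we may assume $X=\mathcal{M}(A)$ is strictly affinoid.

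In this setting, $f_*\mathcal{F}$ is coherent on $X$ by Kiehl's theorem and corresponds to the finite $A$-module $M=(f_*\mathcal{F})(X)$. By \cref{remark:cycle associated to a coherent sheaf}~(1), the cycle $[f_*\mathcal{F}]_k$ on $X$ is identified, under the canonical isomorphism $Z_*(X)\simeq Z_*(\Spec A)$, with the algebraic cycle $[\widetilde M]_k$ on $\Spec(A)$. For the left-hand side, decompose $[\mathcal{F}]_k=\sum_W\mathrm{mult}_W(\mathcal{F})[W]$ over the $k$-dimensional irreducible components $W$ of $\Supp(\mathcal{F})$. For each such $W$, the image $f(W)$ is Zariski-closed in $X$ by Remmert's mapping theorem and is irreducible since $W$ is; by \cref{lemma:equivalentconditionforsurjectivefinite}, only those $W$ with $\dim_K f(W)=k$ (equivalently, $f|_W$ finite) contribute to $f_*[\mathcal{F}]_k$, and their contribution is $\mathrm{mult}_W(\mathcal{F})\deg(W/f(W))[f(W)]$.

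The main obstacle is to identify the coefficient of each $k$-dimensional component $Z\subset\Supp(f_*\mathcal{F})$ on both sides: on the right, $\mathrm{mult}_Z(f_*\mathcal{F})=\mathrm{length}_{A_\gerp}(M_\gerp)$ where $\gerp\subset A$ is the prime corresponding to $Z$; on the left, it is $\sum_{W:f(W)=Z}\mathrm{mult}_W(\mathcal{F})\deg(W/Z)$. Since $\mathrm{mult}_W(\mathcal{F})$ and $\deg(W/Z)$ are, by \cref{multiplicity of a coherent sheaf} and \cref{lemma:welldefinedofdegree}, computable on any strictly affinoid domain $V'\subset Y$ meeting $W$ and agree there with the corresponding scheme-theoretic quantities, the identity reduces locally on $Y$ to the affine scheme-theoretic equality of \cite[Lemma~42.12.4]{stacks-project}. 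The delicate point is that $Y$ itself need not be algebraic; one resolves this by covering $Y$ with finitely many strictly affinoid domains (possible by properness of $f$ over the affinoid $X$) and verifying, via \cref{key lemma for a local property to be global on irreducible subsets}, that the component-by-component identity, valid on each chart by the scheme statement, is independent of the chosen chart and hence yields the global equality on $X=\mathcal{M}(A)$.
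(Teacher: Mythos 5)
Your reductions up to the affinoid base are sound and match the paper's intent: deriving (1) from (2) via $[Z]_k=[\OO_Z]_k$, and using \cref{lemma:localequivalence} together with \cref{prop:flatbasechangeofcycles} and Kiehl's theorem to assume $X=\mathcal{M}(A)$. The gap is in your last step. You propose to verify the coefficient identity ``locally on $Y$,'' by covering $Y$ with strictly affinoid domains and invoking the scheme-theoretic statement on each chart, glued by \cref{key lemma for a local property to be global on irreducible subsets}. This does not work: if $V'\subset Y$ is an affinoid domain, the restriction $f|_{V'}\colon V'\to X$ is not proper, so neither the analytic push-forward of cycles nor the algebraic lemma you cite applies to it; and even formally, $f_*\mathcal{F}$ is not assembled from the $(f|_{V'_i})_*(\mathcal{F}|_{V'_i})$, since $M=(f_*\mathcal{F})(X)=\mathcal{F}(Y)$ is a genuinely global object and its localization $M_{\mathfrak p}$ at the generic point of a component $Z\subset X$ sees the whole fiber of $f$ over that point, not a single chart of $Y$. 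The key lemma only handles well-definedness of a local invariant across overlapping charts; it cannot produce the identity $\mathrm{length}_{A_{\mathfrak p}}(M_{\mathfrak p})=\sum_{W\mapsto Z}\mathrm{mult}_W(\mathcal{F})\deg(W/Z)$, which is exactly what remains to be proved.

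The paper closes this gap by shrinking the base rather than covering the source. After replacing $Y$ by $\Supp(\mathcal{F})$ and $X$ by its image, one fixes a $k$-dimensional component $W$ of $X$ and passes to an affinoid domain $V$ inside a Zariski-open subset $U\subset X$ that avoids the other components of $X$ and the images of the components of $Y$ not dominating $W$. Over such a $U$ every component of $f^{-1}(U)$ is finite over its image (this is where \cref{lemma:equivalentconditionforsurjectivefinite} enters), so $f^{-1}(V)\to V$ is proper with finite fibers, hence finite by \cite[Corollary~3.3.8]{berkovich1990spectral}; in particular $f^{-1}(V)$ is itself affinoid. Only then does the statement become the purely algebraic assertion for a finite morphism of affine Noetherian schemes, which is the cited result in the Stacks project. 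If you replace your chart-by-chart argument on $Y$ with this shrinking of $X$ (and the finiteness of $f$ over the shrunken base), your proof becomes essentially the paper's.
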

\begin{proof}
	Obviously, it suffices to show (2). By \cref{lemma:coherentsheafonsupport}, there is a coherent sheaf $\mathcal{G}$ on $Z\coloneq\Supp(\mathcal{F})$ such that $\mathcal{F}=i_*\mathcal{G}$. Let $Z'$ be the Zariski image of $Z\rightarrow X$. Notice that $f(Z)=Z'$ by properness of $f$ and \cref{lemma:zariskiimageforquasicompactmorphism}. So we have the following commutative diagram
	\[\xymatrix{Z\ar@{^{(}->}[r]\ar[d]_{f|_Z}& Y\ar[d]^f\\ Z'\ar@{^{(}->}[r]& X}.\]
	By functorial property of push-forward, it suffices to show $(f|_Z)_*[\mathcal{G}] = [(f|_Z)_*\mathcal{G}]$. So we can assume that 
	$\dim_KY=k$ and $f\colon Y\to X$ is proper and dominant. Moreover, we can assume that $\dim_KX=k$ (if $\dim_KX<k$, then $f_*[\mathcal{F}]=0=[f_*\mathcal{F}]_k$) and that $X$ is affinoid since both sides can be calculated locally on $X$. 
	
	We write
	\[f_*[\mathcal{F}]_k = \sum\limits_{W}n_W[W] \ \ \text{ and } \ \ [f_*\mathcal{F}]_k = \sum\limits_{W}m_W[W]\]
	where $W$ runs through irreducible component of $X$ of dimension $k$. For a fixed irreducible component $W$, to show $n_W=m_W$, it suffices to show that $(f_*[\mathcal{F}]_k)|_V = ([f_*\mathcal{F}]_k)|_V$ for some affinoid domain $V\subset X$ with $V\cap W\not=\emptyset$.  We can take 
	there is a Zariski-open subsets $U\subset X$ such that $U\cap W'=\emptyset$ and $U\cap f(T)=\emptyset$ for any irreducible component $W'$ of $X$ which is distinct from $W$, and any irreducible component $T$ of $Y$ which doesn't dominate $W$. We can take an affinoid domain of $U$ which is irreducible. So we can assume that $X=\mathcal{M}(A)$ is irreducible and each irreducible component of $Y$ (here $Y$ might be empty) dominates some irreducible component of $X$. 
	Moreover, we can assume that $Y$ is finite over $X$. Indeed, for any irreducible component $T$ of $Y$, we have that $T\to X$ is proper, and $\dim_KT=\dim_KX$. By \cref{proposition:proper equidimension}~(ii), there is an irreducible affinoid domain $V$ (in some Zariski-open subset of $X$) such that $\dim_{\mathscr{H}(x)}Y_x=0$ for any $x\in V$. By \cite[Corollary~3.3.8]{berkovich1990spectral}, $f^{-1}(V)\to V$ is finite, and we can replace $X$ by $V$. So we reduce to the case where $Y, X$ is affinoid and $f$ is finite. This is an algebraic result, see the last part of the proof of \cite[Lemma~41.13.3]{stacks-project}.
\end{proof}

\begin{proposition}\label{prop:flatbasechangeofcycles}
	Let 
	\[\xymatrix{Y'\ar[r]^{g'}\ar[d]_{f'}& Y \ar[d]^{f}\\
	X'\ar[r]_{g}& X}\]
be a Cartesian diagram of separated, strictly $K$-analytic spaces. Assume that $f$ is proper, and that $g$ is flat, has of relative dimension $r$. Then $f'$ is proper, $g'$ is flat and has relative dimension $r$. Moreover, $g^*\circ f_*= f'_*\circ g'^*$ on $Z^*(Y)$.
\end{proposition}
\begin{proof}
	The morphism $f'$ is proper by \cite{berkovich1993etale}, and $g'$ is flat by definition. For any $y'\in Y'$, set $y=g'(y')\in Y$. Notice that
	\[Y_y'=Y'\times_Y\mathcal{M}(\mathscr{H}(y))=X'\times_X\mathcal{M}(\mathscr{H}(y))
	=X'_x\times_{\mathcal{M}(\mathscr{H}(x))}\mathcal{M}(\mathscr{H}(y))\]
	where $x=f(y)\in X$. Since $g$ is flat and has relative dimension $r$, we have that $X_x'=\emptyset$ or $\dim_{\mathscr{H}(x)}(X_x)=r$ by \cref{lemma:relative dimension r}. So $Y_y'=\emptyset$ or $\dim_{\mathscr{H}(y)}(Y_y)=r$ by strictness. Hence $g$ has relative dimension $r$ by \cref{lemma:relative dimension r}.
	
	For the equality, notice that it holds if $f$ is a closed immersion. In general, for any $\alpha\in Z^*(Y)$, to show $g^*(f_*\alpha)= f_*'(g'^*(\alpha))$, we can assume that $\alpha = [Y]$ and it is irreducible. Moreover, we can assume that $X=f(Y)$. 
	
	If $\dim_K X<\dim_KY$, then left-handed side is $0$. For any Abhyankar point $x'\in X'$, let $x=g(x')$. We have \[Y'_{x'} = Y'\times_{X'}\mathcal{M}(\mathscr{H}(x')) =Y\times_{X}\mathcal{M}(\mathscr{H}(x')) = Y_{x} \times_{\mathcal{M}(\mathscr{H}(x))}\mathcal{M}(\mathscr{H}(x')).\] 
	By \cref{proposition:proper equidimension}, we have that $\dim_{\mathscr{H}(x)}Y_x\geq1$. So $\dim_{\mathscr{H}(x')}Y_{x'}'\geq 1$. Notice that $X'$, $Y'$ are equidimensional, so $Y_{x'}'$ is equidimensional by \cite[Lemma~1.5.11]{ducros2018families}. By considering each irreducible component of $X'$ and $Y'$, and by \cref{proposition:proper equidimension}, we have that $\dim_{K}X'<\dim_KY'$. So $f'^*([Y'])=0$.
	
	If $\dim_K X=\dim_KY$, then there is a Zariski-open subset $U$ of $X$ such that $f\colon f^{-1}(U)\rightarrow U$ is finite. Notice that $g^{-1}(U)\subset X'$ (resp. $g'^{-1}(f^{-1}(U))$) is Zariski-dense since $g$ (resp. $g'$) is flat and has relative dimension $r$, we can assume that $U=X$, i.e. $f\colon Y\to X$ is finite. By \cref{lemma:localequivalence}, it suffices to consider the affinoid case, write $Y=\mathcal{M}(B), X=\mathcal{M}(A)$. Notice that $f_*\OO_Y=\widetilde{B}$ is the coherent sheaf on $X$ associated to $B$. We claim that $g^*f_*\OO_Y=f'_*\OO_{Y'}$. Indeed, for any affinoid domain $V=\mathcal{M}(C)\subset X'$, we have that
	\[(g^*f_*\OO_Y)(V) = C\otimes_AB= \OO_{Y'}(V\times_XY) = \OO_{Y'}(f^{-1}(V))=f_*'\OO_{Y'}(V),\]
	this proves our claim.
	Then $$g^*f_*[Y]=g^*[f_*\OO_Y]=[g^*f_*\OO_Y]=[f'_*\OO_{Y'}] = f'_*[\OO_{Y'}]=f'_*[g'^*\OO_Y]=f'_*g'^*[Y],$$
	where the first and the fourth equality are from \cref{prop:pushforwordofcoherentsheaves}, the second and the last equality are from \cref{pull-back of cycles from coherent sheaf}, and the third is our claim. This proves the proposition.
\end{proof}



\section{Proper intersection and intersection multiplicities}

\label{properintersection}

In this section, we fix an equidimensional, regular $K$-analytic space $X$.

\subsection{Proper intersection}
\begin{lemma} \label{lemma: codimension inequality}
	Let $Y, \widetilde{Y}\in \overline{\mathrm{Irr}(X)}$. Then for every irreducible component $Z$ of $Y\cap \widetilde{Y}$, we have
	\[\codim(Z,X)\leq \codim(Y,X)+\codim(\widetilde{Y},X).\]
\end{lemma}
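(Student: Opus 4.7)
The plan is to reduce the inequality to the classical Krull-height bound for proper intersections in a regular Noetherian local ring, using the local-to-global machinery developed in \cref{preliminary} to pass between the analytic space $X$ and an affinoid chart.

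First I would fix a point $z\in Z$ and choose a strict affinoid domain $V=\mathcal{M}(A)$ of $X$ containing $z$, small enough that $Z\cap V$ admits an irreducible component $Z_0$ with $z\in Z_0$ and $\overline{Z_0}^{X_{\zar}}=Z$; by \cite[COROLLAIRE~4.11]{ducros2009les} I may further pick irreducible components $Y_0$ of $Y\cap V$ and $\widetilde Y_0$ of $\widetilde Y\cap V$ both containing $Z_0$ and with $\overline{Y_0}^{X_{\zar}}=Y$, $\overline{\widetilde Y_0}^{X_{\zar}}=\widetilde Y$. After shrinking $V$ once more I arrange that $Z_0$ is itself an irreducible component of $Y_0\cap \widetilde Y_0$. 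Because $X$ is of pure dimension, $V$ is equidimensional as well, and the dimension--codimension identity noted after \cref{dimension codimension} together with pure-dimensionality yields $\codim(Y,X)=\codim(Y_0,V)$, $\codim(\widetilde Y,X)=\codim(\widetilde Y_0,V)$, and $\codim(Z,X)=\codim(Z_0,V)$. The example following the codimension definition then identifies each of these with the height of the corresponding prime ideal of $A$: if $\mathfrak{p},\mathfrak{q},\mathfrak{r}\in\Spec A$ are the primes cut out by $Y_0,\widetilde Y_0,Z_0$, then $\mathfrak{r}$ is by construction minimal over $\mathfrak{p}+\mathfrak{q}$.

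It then remains to invoke the algebraic fact that if $A$ is a regular Noetherian ring and $\mathfrak{r}$ is a minimal prime over $\mathfrak{p}+\mathfrak{q}$, then $\Ht(\mathfrak{r})\leq \Ht(\mathfrak{p})+\Ht(\mathfrak{q})$. Regularity of $A$ is guaranteed by the affinoid characterization of regularity recalled in the paper. The algebraic inequality is standard: localize at $\mathfrak{r}$ to get a regular local ring, then use the diagonal embedding to bound the dimension of $A_{\mathfrak{r}}/(\mathfrak{p}+\mathfrak{q})$ in terms of those of $A_{\mathfrak{r}}/\mathfrak{p}$ and $A_{\mathfrak{r}}/\mathfrak{q}$, and conclude via Serre's dimension formula for intersections in a regular local ring. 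Transporting back through the identifications of the previous paragraph gives the desired inequality.

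The only real obstacle is bookkeeping: guaranteeing that the local irreducible representatives $Y_0,\widetilde Y_0,Z_0$ can be chosen with $Z_0$ a genuine irreducible component of $Y_0\cap\widetilde Y_0$ and with the prescribed Zariski closures in $X$. This is where \cref{key lemma for a local property to be global on irreducible subsets} and the corresponding local--global statement for irreducible components in \cite[COROLLAIRE~4.11]{ducros2009les} do the work; no ingredient beyond what has already been introduced is needed.
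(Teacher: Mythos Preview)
Your proof is correct and follows essentially the same approach as the paper: reduce to an affinoid chart where codimensions of the global irreducible subspaces agree with those of their local irreducible components, and then invoke the classical height inequality for intersections in a regular Noetherian ring. The paper's own proof is a terse three-line sketch of exactly this strategy, so your version simply fills in the bookkeeping (choice of $Y_0,\widetilde Y_0,Z_0$ and the identification with prime ideals) that the paper leaves implicit.
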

\begin{proof}
	The proof is based on the corresponding result in scheme theory.
	We can assume that $X$ is irreducible. For any affinoid domain $V\subset X$, we have that
	$\codim(T, V)= \codim(Y,X)$, where $T$ is a irreducible component of $V\cap Y$. Then we can apply the corresponding result in scheme theory.
\end{proof}

\begin{definition} 	\label{proper intersection on analytic spaces}
	\begin{enumerate}
		\item [(1)] Let $Y, \widetilde{Y}\in \overline{\mathrm{Irr}(X)}$. We say that $Y$ and $\widetilde{Y}$ \emph{intersect properly} if $\codim(Z,X)\geq \codim(Y,X)+\codim(\widetilde{Y},X)$.
		\item[(2)] Let $\alpha=\sum\limits_{i\in I}n_i[Y_i]\in Z^s(X)$ and $\beta=\sum\limits_{j\in J}m_j[\widetilde{Y}_j]\in Z^r(X)$. We say that $\alpha$ and $\beta$ \emph{intersect properly} if $Y_i$ and $\widetilde{Y}_j$ intersect properly for all $i$ and $j$.
	\end{enumerate}
\end{definition}

\begin{lemma} \label{lemma:properlyintersectislocal}
	Let $Y, \widetilde{Y}\in \overline{\mathrm{Irr}(X)}$. Then the following statements are equivalent:
	\begin{enumerate}
		\item [(i)] $Y, \widetilde{Y}$ intersect properly;
		\item[(ii)] For any $x\in Y\cap \widetilde{Y}$, there is an affinoid domain $V$ containing $x$ such that any $Q\in \mathrm{Irr}(Y\cap V), \widetilde{Q}\in\mathrm{Irr}(\widetilde{Y}\cap V)$ intersect properly on $V$;
		\item [(iii)] For any affinoid domain $V$ with $Y\cap V$, $\widetilde{Y}\cap V \not=\emptyset$ and any $Q\in \mathrm{Irr}(Y\cap\mathrm{V}), \widetilde{Q}\in\mathrm{Irr}(\widetilde{Y}\cap V)$, we have $Q$ and $\widetilde{Q}$ intersect properly.
	\end{enumerate} 
\end{lemma}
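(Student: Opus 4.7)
The implication (iii) $\Rightarrow$ (ii) is immediate by taking any affinoid neighborhood of $x$. The substantive content is the equivalence between the global condition (i) and the affinoid-local conditions (ii), (iii).

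The underlying tool I would establish first is that codimension is compatible with passing to affinoid domains: for any $W \in \overline{\mathrm{Irr}(X)}$ and any affinoid domain $V \subset X$ with $W\cap V \neq \emptyset$, every irreducible component $T$ of $W\cap V$ satisfies $\codim(T,V)=\codim(W,X)$. This follows from the quoted result of Ducros identifying $\codim(Y,X)$ with $\codim(\Spec(A/I),\Spec(A))$ for affinoid $X=\mathcal{M}(A)$, $Y=V(I)$, combined with \cref{dimension codimension} (catenarity). Under this key fact, $Y$ and $\widetilde{Y}$ intersecting properly globally is a condition on codimensions of components of $Y\cap\widetilde{Y}$ that can be matched against the codimensions of components of intersections inside any affinoid chart.

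For (i) $\Rightarrow$ (iii): fix $V$ with $Y\cap V,\widetilde{Y}\cap V\neq\emptyset$, take $Q\in\mathrm{Irr}(Y\cap V)$, $\widetilde{Q}\in\mathrm{Irr}(\widetilde{Y}\cap V)$, and let $R_0$ be any irreducible component of $Q\cap\widetilde{Q}$. Then $R_0\subset Y\cap\widetilde{Y}\cap V$, hence $R_0$ is contained in some irreducible component $T$ of $Z_0\cap V$ for a suitable $Z_0\in \mathrm{Irr}(Y\cap\widetilde{Y})$. Chaining codimension comparisons,
\[
\codim(R_0,V) \geq \codim(T,V) = \codim(Z_0,X) \geq \codim(Y,X)+\codim(\widetilde{Y},X) = \codim(Q,V)+\codim(\widetilde{Q},V),
\]
where the first equality uses the key fact and the second inequality is (i). For (iii) $\Rightarrow$ (i): given $Z_0\in\mathrm{Irr}(Y\cap\widetilde{Y})$, pick any affinoid $V$ with $Z_0\cap V\neq\emptyset$ and an irreducible component $T$ of $Z_0\cap V$; since $T$ is irreducible and contained in $Y\cap V$ and in $\widetilde{Y}\cap V$, it lies in some $Q\in\mathrm{Irr}(Y\cap V)$ and some $\widetilde{Q}\in\mathrm{Irr}(\widetilde{Y}\cap V)$, and therefore in some irreducible component $R_0$ of $Q\cap \widetilde{Q}$. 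Then
\[
\codim(Z_0,X) = \codim(T,V) \geq \codim(R_0,V) \geq \codim(Q,V)+\codim(\widetilde{Q},V) = \codim(Y,X)+\codim(\widetilde{Y},X),
\]
using (iii) in the middle inequality.

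Finally, (ii) $\Rightarrow$ (i) is the same argument as (iii) $\Rightarrow$ (i), applied to the specific affinoid $V_x$ provided by (ii) for a chosen point $x\in Z_0$. The main obstacle, and the step I would be most careful about, is the key fact on preservation of codimension under restriction to affinoid domains, especially the assertion that an irreducible component $T$ of $W\cap V$ genuinely has codimension equal to $\codim(W,X)$; this needs the independence of codimension from the base field and from the choice of affinoid chart, for which \cref{key lemma for a local property to be global on irreducible subsets} provides the formal framework to upgrade the purely algebraic identity to a well-defined quantity on $W$.
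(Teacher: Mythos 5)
Your proof is correct and follows essentially the same route as the paper: the paper's entire argument is the single observation that $\codim(Q,V)=\codim(Y,X)$ for any irreducible component $Q$ of $Y\cap V$, and the equivalences then follow by exactly the chains of codimension inequalities you write out (the paper leaves these implicit). The only cosmetic point is that for (iii) $\Rightarrow$ (ii) you should take an affinoid \emph{domain} containing $x$ (which always exists, and is all that condition (ii) asks for) rather than an affinoid \emph{neighborhood}, which need not exist when $X$ is not good.
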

\begin{proof}
	For any affinoid domain $V\subset X$ with $Y\cap V = \emptyset$ and any $Q\in \mathrm{Irr}(Y\cap V)$, we have $\codim(Q, V)= \codim(Y,X)$. Then the lemma follows.
\end{proof}

\subsection{Multiplicities and intersect products}

\

In this subsection, we will apply the intersection theory on a regular catenary Noetherian scheme to define multiplicities. Another definition using Tor formula will be given in the next subsection.

Recall, on a regular, catenary Noetherian scheme $\mathcal{X}$, let $Q, \widetilde{Q}$ be irreducible closed subschemes with $\codim(Q,\mathcal{X})= s, \codim(\widetilde{Q},\mathcal{X}) =t$. Then intersection product of $Q, \widetilde{Q}$ is defined as 
\begin{align*}
Q\cdot \widetilde{Q}=\sum\limits_{T}e_T[T]\coloneqq\sum\limits_{i}(-1)^i[\mathrm{Tor}_i^{\OO_\mathcal{X},T}(\OO_Q,\OO_{\widetilde{Q}})]^{s+t}\in Z^{s+t}(X),
\end{align*}
i.e. 
\[e_T=e(\mathcal{X}, Q\cdot \widetilde{Q}, T) = \sum\limits_{i}(-1)^i\mathrm{length}_{\OO_{\mathcal{X},T}}(\mathrm{Tor}_i^{\OO_{\mathcal{X},T}}(\OO_{Q,T},\OO_{\widetilde{Q},T}))\]
where $T$ runs through $\mathrm{Irr}(Q\cap \widetilde{Q})$ with $\codim(T,\mathcal{X})=s+t$, and $\OO_{\mathcal{X},T}$ (resp. $\OO_{Q,T}$, resp. $\OO_{\widetilde{Q},T}$) denotes the local ring of $\mathcal{X}$ (resp. $Q$, resp. $\widetilde{Q}$) at the generic point of $T$. 

\begin{lemma} 	\label{multiplicities of proper intersection}
		Let $Y, \widetilde{Y}\subset X$ be irreducible Zariski-closed subspaces with $\codim(Y, X)=s, \codim(\widetilde{Y},X) = t$. Assume that $Y$ and $\widetilde{Y}$ intersect properly. For any irreducible component $Z$ of $Y\cap \widetilde{Y}$ with $\codim(Z,X)=s+t$, and any affinoid domain $V\subset X$ with $Z\cap V\not=\emptyset$, we set
	\[e(X,Y\cdot\widetilde{Y},Z)\coloneqq\sum\limits_{Q,\widetilde{Q}}e(V,Q\cdot\widetilde{Q},T)\]
	where $T\in \mathrm{Irr}(Z\cap V)$ and $(Q, \widetilde{Q})$ runs through $\mathrm{Irr}(Y\cap V)\times \mathrm{Irr}(\widetilde{Y}\cap V)$ such that $T\in \mathrm{Irr}(Q\cap \widetilde{Q})$. Then $e(X,Y,\widetilde{Y},Z)$ is a positive integer which is independent of the choice of $V$ and $T$. We call $e(X,Y,\widetilde{Y},Z)$ the \emph{multiplicity of $Z$ on $Y\cap \widetilde{Y}$}.
\end{lemma}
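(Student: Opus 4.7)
The plan is to mirror the structure of the proofs of \cref{multiplicity of a coherent sheaf} and \cref{multiplicity of a divisor}. First I would prove a local comparison statement: for any affinoid domains $W\subset V$ of $X$ with $Z\cap W\ne\emptyset$, and any $T\in\mathrm{Irr}(Z\cap V)$, $T'\in\mathrm{Irr}(Z\cap W)$ with $\overline{T'}^{V_{\mathrm{Zar}}}=T$,
\[\sum_{(Q,\widetilde Q)}e(V,Q\cdot\widetilde Q,T)=\sum_{(Q',\widetilde Q')}e(W,Q'\cdot\widetilde Q',T'),\]
where the sums are formed as in the statement. Granting this, the globalization is the same formal consequence of \cref{key lemma for a local property to be global on irreducible subsets} used in the two cited lemmas: for each positive integer $m$, apply the key lemma to the property ``the sum equals $m$'' on components of $Z\cap V$ (after replacing $X$ by $Z$); the resulting Zariski-closed decomposition $Z=Z_{P_m}^+\cup Z_{P_m}^-$ together with the irreducibility of $Z$ pins down a unique value, which we call $e(X,Y\cdot\widetilde Y,Z)$.

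To prove the local comparison, I would write $V=\mathcal{M}(A)$, $W=\mathcal{M}(B)$ so that $A\to B$ is flat, let $\mathfrak{q}\subset A$ and $\mathfrak{q}'\subset B$ be the primes corresponding to $T$ and $T'$, and set $\mathcal{V}=\Spec(A)$, $\mathcal{W}=\Spec(B)$. Both sides are precisely the coefficients at $T$, respectively $T'$, of the scheme-theoretic intersection products of $[Y\cap V]\cdot[\widetilde Y\cap V]$ on $\mathcal{V}$ and $[Y\cap W]\cdot[\widetilde Y\cap W]$ on $\mathcal{W}$. The induced flat local homomorphism of regular local rings $A_{\mathfrak{q}}\to B_{\mathfrak{q}'}$ gives the flat base-change formula
\[\mathrm{Tor}^{A_{\mathfrak{q}}}_i(M,N)\otimes_{A_{\mathfrak{q}}}B_{\mathfrak{q}'}\simeq\mathrm{Tor}^{B_{\mathfrak{q}'}}_i(M\otimes_{A_{\mathfrak{q}}}B_{\mathfrak{q}'},N\otimes_{A_{\mathfrak{q}}}B_{\mathfrak{q}'}),\]
which, combined with additivity of length along the finite filtration of $M\otimes_{A_{\mathfrak{q}}}B_{\mathfrak{q}'}$ by primes of $B$ lying over $\mathfrak{q}$ and minimal over the ideal cutting out $Q$ (or $\widetilde Q$), yields the desired identity: the refinement of each $Q$ into components $Q'$ of $Y\cap W$ above $T'$ (and analogously for $\widetilde Q$) is exactly what this filtration records.

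Positivity then follows from the Serre intersection multiplicity theorem applied in the regular local ring $\OO_{V,T}$: proper intersection, which is guaranteed componentwise by \cref{lemma:properlyintersectislocal}, forces each summand $e(V,Q\cdot\widetilde Q,T)$ to be a positive integer, and at least one pair contributes since $T\subset Y\cap\widetilde Y$ lies in some $Q\cap\widetilde Q$.

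The main obstacle is the local comparison, specifically the bookkeeping between irreducible components of $Y\cap V,\widetilde Y\cap V$ and their refinements in $Y\cap W,\widetilde Y\cap W$: one must verify that the pairs $(Q',\widetilde Q')$ with $T'\in\mathrm{Irr}(Q'\cap\widetilde Q')$ lying above a fixed pair $(Q,\widetilde Q)$ have Serre multiplicities summing to the single Serre multiplicity at $T$. This is the scheme-theoretic flat pull-back compatibility of proper intersection products on regular Noetherian affine schemes, and it is the technical heart of the argument.
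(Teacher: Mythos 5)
Your proposal is correct and follows essentially the same route as the paper: establish the local comparison between nested affinoid domains $W\subset V$ and then globalize with \cref{key lemma for a local property to be global on irreducible subsets}, exactly as in \cref{multiplicity of a coherent sheaf} and \cref{multiplicity of a divisor}. The only difference is cosmetic --- where you unpack the flat base change for $\mathrm{Tor}$ and the length filtration by hand, the paper black-boxes this as the scheme-theoretic identity $f^*[Q]\cdot f^*[\widetilde Q]=f^*([Q]\cdot[\widetilde Q])$ for the flat map $\Spec(B)\to\Spec(A)$ and reads off the coefficient of $[T']$, using that all components of $f^*[Q]$, $f^*[\widetilde Q]$, $f^*[T_p]$ occur with multiplicity one by \cref{multiplicity of a coherent sheaf}.
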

\begin{proof}
	The idea of proof is similar with the proof of Lemma~\ref{multiplicity of a coherent sheaf} and Lemma~\ref{multiplicity of a divisor}. It is sufficient to show that 
	for any affinoid domain $V, W\subset X$ with $W\subset V$, $Z\cap W\not=\emptyset$, we have that 
	\[\sum\limits_{Q,\widetilde{Q}}e(V,Q\cdot\widetilde{Q},T) = \sum\limits_{Q',\widetilde{Q}'}e(W,Q'\cdot\widetilde{Q}',T')\]
	where $T\in \mathrm{Irr}(Z\cap V)$, $(Q, \widetilde{Q})$ runs through $\mathrm{Irr}(Y\cap V)\times \mathrm{Irr}(\widetilde{Y}\cap V)$ such that $T\in \mathrm{Irr}(Q\cap \widetilde{Q})$, and $T', Q',\widetilde{Q}'$ is given similarly with $\overline{T'}^{V_{\mathrm{Zar}}}=T$, $\overline{T}^{X_{\mathrm{Zar}}}=Z$. Let $V=\mathcal{M}(A), W=\mathcal{M}(B)$ and $f\colon \Spec(B)\to \Spec(A)$ is the morphism of schemes given by $W\subset V$. 
	In the following, we view every irreducible subset in the corresponding affine schemes.
	We fix a pair $(Q, \widetilde{Q})$. Let $f^*[Q]=\sum\limits_{i=1}^{m}[Q_i'], f^*[\widetilde{Q}]=\sum\limits_{j=1}^{\widetilde{m}}[\widetilde{Q}_{j}']$, $[Q]\cdot[\widetilde{Q}]=\sum\limits_{p=1}^ke(V,Q\cdot \widetilde{Q}, T_p)[T_p]$ with $T_1=T$, and $f^*[T_p] = \sum\limits_{q=1}^{l_q}[T_{pq}']$ with $T_{11}'=T'$. Notice that each coefficient of $[Q_i']$ in $f^*[Q]$ is $1$ by \cref{multiplicity of a coherent sheaf} (notice that the flatness of $A\to B$ is used in the proof of \cref{multiplicity of a coherent sheaf}), similar for $f^*[\widetilde{Q}]$ and $f^*[T_p]$. In scheme theory, we have \[f^*[Q]\cdot f^*[\widetilde{Q}] = f^*([Q]\cdot [\widetilde{Q}]),\]
	i.e. \[\sum\limits_{i,j}[Q_i']\cdot[\widetilde{Q}_{j}'] = \sum\limits_{i,j,p,q}e(W,Q_i'\cdot \widetilde{Q}_j',T_{pq}')[T_{pq}']=\sum\limits_{p,q}e(V,Q, \widetilde{Q}, T_p)[T_{pq}'],\]
	where $e(W,Q_i'\cdot \widetilde{Q}_j',T_{pq}')=0$ if $T_{pq}'\not\in \mathrm{Irr}(Q_i'\cap \widetilde{Q}_j')$. Comparing the coefficients of $[T_{11}']$, we have $e(V,Q\cdot\widetilde{Q},T)=\sum\limits_{i,j}e(W,Q_i'\cdot \widetilde{Q}_j',T')$.  When $(Q,\widetilde{Q})$ runs through $\mathrm{Irr}(Y\cap V)\times \mathrm{Irr}(\widetilde{Y}\cap V)$ such that $T\in \mathrm{Irr}(Q\cap \widetilde{Q})$, we have the equality we want.
\end{proof}

\begin{definition} 	\label{intersection product of cycles}
	Let $Y, \widetilde{Y}\subset X$ be irreducible Zariski-closed subspaces with $\codim(Y, X)=s, \codim(\widetilde{Y},X) = t$ that intersect properly. We define the \emph{intersection product of $Y$ and $\widetilde{Y}$} as
	\[[Y]\cdot [\widetilde{Y}]\coloneq\sum\limits_{Z}e_Z[Z]\in Z^{s+t}(X),\]
	where $Z$ runs through the set $\mathrm{Irr}(Y\cap \widetilde{Y})$ with $\codim(Z,X)=s+t$, and	$e_Z\coloneq e(X, Y\cdot \widetilde{Y}, Z)$ given in \cref{multiplicities of proper intersection}.
	
	In general, let $\alpha=\sum\limits_{i\in I}n_i[Y_i]\in Z^s(X)$ and $\beta=\sum\limits_{j\in J}m_j[\widetilde{Y}_j]\in Z^r(X)$. Assume that $\alpha$ and $\beta$ intersect properly. We define 
	\[\alpha\cdot \beta\coloneqq\sum\limits_{i,j}n_im_j[Y_i]\cdot [\widetilde{Y}_j].\]
\end{definition}

From the associativity of intersections in scheme theory, we have the associativity for our definition.

\begin{corollary}
	Let $Y, \widetilde{Y}, \widetilde{\widetilde{Y}}$ be irreducible Zariski-closed subspaces of $X$. Assume that $Y, \widetilde{Y}, \widetilde{\widetilde{Y}}$ intersect properly pairwise and that $\codim(Y\cap\widetilde{Y}\cap \widetilde{\widetilde{Y}}, X) = \codim(Y, X)+\codim(\widetilde{Y}, X)+\codim( \widetilde{\widetilde{Y}}, X)$. Then
	\[[Y]\cdot([\widetilde{Y}]\cdot[\widetilde{\widetilde{Y}}]) = ([Y]\cdot[\widetilde{Y}])\cdot[\widetilde{\widetilde{Y}}]\]
	as cycles on $X$.
\end{corollary}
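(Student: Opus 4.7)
The plan is to reduce the statement to the classical associativity of intersection products on a regular, catenary Noetherian affine scheme, exploiting that intersection multiplicities on a regular analytic space are defined locally via the scheme-theoretic Tor formula. The local-to-global step will be provided by \cref{lemma:localequivalence}.

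First I would verify that both iterated products are defined, i.e., that $Y\cdot\widetilde{Y}$ and $\widetilde{\widetilde{Y}}$ intersect properly and symmetrically that $\widetilde{Y}\cdot\widetilde{\widetilde{Y}}$ and $Y$ intersect properly. By construction, every prime component $Z$ of $Y\cdot\widetilde{Y}$ lies in $Y\cap\widetilde{Y}$ with $\codim(Z,X)=\codim(Y,X)+\codim(\widetilde{Y},X)$; combined with the assumption on the codimension of the triple intersection $Y\cap\widetilde{Y}\cap\widetilde{\widetilde{Y}}$, every component of $Z\cap\widetilde{\widetilde{Y}}$ has codimension $\codim(Y,X)+\codim(\widetilde{Y},X)+\codim(\widetilde{\widetilde{Y}},X)$ in $X$, so proper intersection in the sense of \cref{proper intersection on analytic spaces} holds for both sides of the claimed equality.

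Next, by \cref{lemma:localequivalence} it suffices to check the desired equality on each affinoid $V=\mathcal{M}(A)$ in a G-covering of $X$. Decompose $Y\cap V$, $\widetilde{Y}\cap V$, $\widetilde{\widetilde{Y}}\cap V$ into irreducible components $\{Q_\alpha\}$, $\{\widetilde{Q}_\beta\}$, $\{\widetilde{\widetilde{Q}}_\gamma\}$, viewed as closed subschemes of $\Spec(A)$. Using the defining formula of \cref{multiplicities of proper intersection} and expanding by additivity in each slot of \cref{intersection product of cycles}, one obtains that $((Y\cdot\widetilde{Y})\cdot\widetilde{\widetilde{Y}})|_V$ is a triple sum of scheme-theoretic intersection products $(Q_\alpha\cdot\widetilde{Q}_\beta)\cdot\widetilde{\widetilde{Q}}_\gamma$ in $\Spec(A)$, and likewise $(Y\cdot(\widetilde{Y}\cdot\widetilde{\widetilde{Y}}))|_V$ is the analogous sum with the other grouping.

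Finally I would invoke the associativity of intersection products on $\Spec(A)$. Since $X$ is regular, so is $A$, hence $\Spec(A)$ is regular and catenary Noetherian; and the scheme-theoretic proper-intersection hypotheses transfer from $X$ via the identity $\codim(Q_\alpha,V)=\codim(Y,X)$ and its variants, so Serre's classical associativity theorem applies termwise and gives $(Q_\alpha\cdot\widetilde{Q}_\beta)\cdot\widetilde{\widetilde{Q}}_\gamma=Q_\alpha\cdot(\widetilde{Q}_\beta\cdot\widetilde{\widetilde{Q}}_\gamma)$. Summing over $\alpha,\beta,\gamma$ yields the required identity on $V$. The main obstacle I anticipate is the bookkeeping for irreducible components: one must confirm that the $V$-local decompositions assemble into the globally defined multiplicities of \cref{multiplicities of proper intersection} in the same way under both orderings of the product, which is essentially the compatibility argument already carried out in the proof of \cref{multiplicities of proper intersection} via the flat-pullback behaviour of intersection cycles.
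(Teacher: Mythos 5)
Your proposal is correct and follows essentially the same route as the paper: the paper's proof likewise reduces to affinoid domains via \cref{lemma:localequivalence} and then cites the associativity of intersection products on regular, catenary Noetherian schemes (Stacks Project, Lemma~43.20.1). Your additional remarks on verifying proper intersection of the iterated products and on the component bookkeeping are a reasonable elaboration of details the paper leaves implicit.
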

\begin{proof}
	This is from \cref{lemma:localequivalence} and the corresponding algebraic result, see \cite[Lemma~43.20.1]{stacks-project}.
\end{proof}

\begin{lemma}	\label{lemma:flatpullbackpreservesintersection}
	Let $f\colon X\to Y$ be a flat morphism of equidimensional, regular $K$-analytic spaces. Let $\mathcal{F}, \mathcal{G}$ be coherent sheaves on $Y$ with $\codim(\Supp(\mathcal{F}),X)\leq r, \codim(\Supp(\mathcal{G}),X)\leq s$, and $\codim(\Supp(\mathcal{F})\cap\Supp(\mathcal{G}),X)\geq r+s+\dim(Y)-\dim(X)$. In this case, the cycle $[f^*\mathcal{F}]^r$ and $[f^*\mathcal{G}]^s$ intersect properly and 
	\[f^*([\mathcal{F}]^r\cdot [\mathcal{G}]^s) = [f^*\mathcal{F}]^r\cdot[f^*\mathcal{G}]^s.\]
\end{lemma}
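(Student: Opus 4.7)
The plan is to reduce the statement to the analogous result in scheme theory via localization to affinoid domains, in the spirit of the proof of \cref{pull-back of cycles from coherent sheaf}.

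First, by \cref{lemma:localequivalence}, it suffices to check the equality of the codimension-$(r+s)$ cycles on an arbitrary affinoid domain of $X$. I would choose a G-covering $X=\bigcup_i V_i$ by affinoid domains $V_i=\mathcal{M}(B_i)$ such that each $f(V_i)$ is contained in an affinoid domain $U_i=\mathcal{M}(A_i)$ of $Y$; by flatness of $f$, the map $A_i\to B_i$ is flat. Setting $\mathcal{V}_i=\Spec(B_i)$ and $\mathcal{U}_i=\Spec(A_i)$, both are regular Noetherian affine schemes, using the characterization of regular analytic spaces recalled before \cref{prop:cartierdivisorandweildivisor}.

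Second, I would establish that $[f^*\mathcal{F}]^r$ and $[f^*\mathcal{G}]^s$ intersect properly. By \cref{lemma:properlyintersectislocal}, this can be checked on each $V_i$. Because $f$ is flat of (locally constant) relative dimension $\dim X-\dim Y$ on equidimensional pieces, every irreducible component of $\Supp(f^*\mathcal{F})\cap V_i$ lies over a component of $\Supp(\mathcal{F})\cap U_i$ with the codimension preserved, and similarly for $\mathcal{G}$; hence the hypothesis $\codim(\Supp(\mathcal{F})\cap\Supp(\mathcal{G}),Y)\geq r+s$ propagates to the corresponding inequality on each $V_i$.

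Third, under the identification $Z^*(V_i)\simeq Z^*(\mathcal{V}_i)$ of \cref{remark:cycle associated to a coherent sheaf}(1), the cycles $[\mathcal{F}|_{U_i}]^r$ and $[\mathcal{G}|_{U_i}]^s$ correspond to their algebraic counterparts, and the same is true of their flat pullbacks to $V_i$. Moreover, by the very definition (\cref{multiplicities of proper intersection}), the intersection multiplicities on the analytic side are computed locally as alternating sums of lengths of Tor modules over the local rings of the affinoid algebras; hence the intersection product $[\mathcal{F}|_{U_i}]^r\cdot [\mathcal{G}|_{U_i}]^s$ matches $[\mathcal{F}^{\mathrm{al}}_{U_i}]^r\cdot [\mathcal{G}^{\mathrm{al}}_{U_i}]^s$, and likewise for the pullback side over $V_i$. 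With both sides identified with cycles on $\mathcal{V}_i$ coming from the flat morphism $\mathcal{V}_i\to\mathcal{U}_i$ of regular Noetherian schemes, the desired equality follows from the classical statement in scheme theory that flat pull-back commutes with the Serre-style intersection product on coherent cycles (compatible with the Tor formula via flat base change of Tor).

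The main obstacle is verifying step two cleanly, i.e.\ that proper intersection genuinely transfers from $Y$ to $X$ under $f$; this depends on the constancy of relative dimension for flat morphisms of equidimensional analytic spaces (via \cite[Lemma~4.5.11]{ducros2018families}) together with the catenary behavior of codimension established in \cref{dimension codimension}. Once this is in place, the passage to schemes and invocation of the algebraic analogue is routine.
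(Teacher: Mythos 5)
Your proposal is correct and follows essentially the same route as the paper, which simply reduces to affinoid domains via \cref{lemma:localequivalence} and then invokes the corresponding statement for regular, catenary Noetherian schemes (\cite[Lemma~43.21.1]{stacks-project}). You spell out the local identification $Z^*(V)\simeq Z^*(\Spec(B))$ and the transfer of proper intersection, which the paper leaves implicit, but the underlying argument is identical.
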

\begin{proof}
This is from \cref{lemma:localequivalence} and \cite[Lemma~43.21.1]{stacks-project} for regular, catenary Noetherian schemes.
\end{proof}

The lemma implies the following corollary directly.

\begin{corollary}
	Let $f\colon X\to Y$ be flat morphism of regular $K$-analytic spaces. Let $\alpha\in Z^r(Y), \beta\in Z^s(Y)$. Assume that $\alpha$ and $\beta$ intersect properly. Then $f^*\alpha$ and $f^*\beta$ intersect properly and $f^*(\alpha\cdot\beta) = f^*\alpha\cdot f^*\beta$.
	\label{intersection product preserved under flat pullback for analytic spaces}
\end{corollary}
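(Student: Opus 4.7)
The plan is to deduce the corollary directly from the immediately preceding \cref{lemma:flatpullbackpreservesintersection}, which already handles the statement at the level of coherent sheaves. First I would use bilinearity of intersection product and of flat pull-back to reduce to the case where $\alpha = [Y']$ and $\beta = [\widetilde{Y}']$ for single irreducible Zariski-closed subspaces $Y', \widetilde{Y}'$ of $Y$ that intersect properly, with $\codim(Y',Y) = r$ and $\codim(\widetilde{Y}',Y) = s$. The goal then becomes the identity $f^{*}([Y']\cdot[\widetilde{Y}']) = f^{*}[Y']\cdot f^{*}[\widetilde{Y}']$ in $Z^{r+s}(X)$.

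Next I would identify $[Y'] = [\OO_{Y'}]^{r}$ and $[\widetilde{Y}'] = [\OO_{\widetilde{Y}'}]^{s}$ via \cref{cycle associated to closed analytic subspace}, and use \cref{pull-back of cycles from coherent sheaf} to rewrite $f^{*}[Y'] = [f^{*}\OO_{Y'}]^{r}$ and $f^{*}[\widetilde{Y}'] = [f^{*}\OO_{\widetilde{Y}'}]^{s}$. To invoke \cref{lemma:flatpullbackpreservesintersection} with $\mathcal{F} = \OO_{Y'}$ and $\mathcal{G} = \OO_{\widetilde{Y}'}$, I need to check the codimension hypothesis $\codim(\Supp(f^{*}\OO_{Y'})\cap \Supp(f^{*}\OO_{\widetilde{Y}'}), X) \geq r + s + \dim Y - \dim X$. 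Since $\Supp(f^{*}\OO_{Y'}) = f^{-1}(Y')$ and similarly for $\widetilde{Y}'$, and since $f$ is flat, its fibers have constant dimension $\dim X - \dim Y$ along each irreducible component; proper intersection of $Y'$ and $\widetilde{Y}'$ on $Y$ together with flatness of $f$ then gives the required codimension estimate on $X$, and simultaneously verifies that $f^{*}\alpha, f^{*}\beta$ intersect properly.

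Once these preliminaries are in place, applying \cref{lemma:flatpullbackpreservesintersection} directly yields
\[
f^{*}([Y']\cdot[\widetilde{Y}']) = f^{*}([\OO_{Y'}]^{r}\cdot[\OO_{\widetilde{Y}'}]^{s}) = [f^{*}\OO_{Y'}]^{r}\cdot[f^{*}\OO_{\widetilde{Y}'}]^{s} = f^{*}[Y']\cdot f^{*}[\widetilde{Y}'],
\]
which extends by linearity to arbitrary $\alpha,\beta$ intersecting properly.

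The main obstacle I expect is the dimension bookkeeping in the second paragraph: one must confirm that the flat pull-back of a properly intersecting pair remains properly intersecting in the sense of \cref{proper intersection on analytic spaces}, and that the relevant codimension shift in the hypothesis of \cref{lemma:flatpullbackpreservesintersection} is precisely accounted for by the relative dimension of $f$. Everything else is formal, since the proof of \cref{lemma:flatpullbackpreservesintersection} already reduces to the corresponding algebraic statement via \cref{lemma:localequivalence}.
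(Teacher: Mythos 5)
Your proposal is correct and matches the paper's intent exactly: the paper derives this corollary directly from \cref{lemma:flatpullbackpreservesintersection} by taking $\mathcal{F}=\OO_{Y'}$, $\mathcal{G}=\OO_{\widetilde{Y}'}$ for prime cycles and extending by bilinearity, which is precisely the reduction you spell out. The codimension bookkeeping you flag as the main obstacle is indeed the only content beyond formality, and it is exactly the hypothesis of the lemma restated as proper intersection on $Y$.
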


\subsection{Intersection multiplicities using Tor formula}

\

We could define the multiplicities following the idea in \cite[Section~43]{stacks-project} using $\mathrm{Tor}_i^{\OO_X}(\mathcal{F},\mathcal{G})$.

It is not hard to see that $\mathrm{Tor}_i^{\OO_X}(\mathcal{F},\mathcal{G})$ is a coherent sheaf on $X$. Indeed, it suffices to consider the case where $X=\mathcal{M}(A)$ is affinoid, then $\mathcal{C}oh(X)\simeq \mathcal{C}oh(\Spec(A))$. Since $A$ is Noetherian, so we see that $\mathrm{Tor}_i^{\OO_X}(\mathcal{F},\mathcal{G})$ is a coherent sheaf on $X$.

We show the following result.

\begin{proposition}\label{prop:intersectionmultiplicityusingtorformula}
\begin{enumerate}
	\item [(1)] Let $Y, \widetilde{Y}$ be irreducible Zariski-closed subspaces of $X$ with $\codim(Y,X)=s, \codim(\widetilde{Y},X)=t$. Assume that $Y, \widetilde{Y}$ intersect properly. Then 
	\[[Y]\cdot [\widetilde{Y}]=\sum\limits_{i}(-1)^i[\mathrm{Tor}_i^{\OO_X}(\OO_{Y},\OO_{\widetilde{Y}})]^{s+t}.\]
	\item[(2)] Let $\mathcal{F}, \mathcal{G}$ be coherent sheaves on $X$ with $\codim(\Supp(\mathcal{F}),X)\geq s$, $\codim(\Supp(\mathcal{G}),X)\geq t$. Assume that $[\mathcal{F}]^s, [\mathcal{G}]^t$ intersecting properly. Then 
	\[[\mathcal{F}]^s\cdot[\mathcal{G}]^t = \sum\limits_{i}(-1)^i[\mathrm{Tor}_i^{\OO_X}(\mathcal{F},\mathcal{G})]^{s+t}.\]
\end{enumerate} 
\end{proposition}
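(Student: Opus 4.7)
The plan is to reduce both statements to the analogous theorems for regular, catenary Noetherian schemes via the equivalence between coherent sheaves on an affinoid space and finitely generated modules over the affinoid algebra. First, using \cref{lemma:localequivalence}, I can check both identities after restriction to each member of a G-covering of $X$ by strictly affinoid domains; so I may assume $X=\mathcal{M}(A)$ with $A$ a regular strictly affinoid algebra and set $\mathcal{X}=\Spec(A)$.

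The first key step is to show that the equivalence $\mathcal{C}oh(X)\simeq \mathcal{C}oh(\mathcal{X})$ takes $\mathrm{Tor}_i^{\OO_X}(\mathcal{F},\mathcal{G})$ to $\mathrm{Tor}_i^{\OO_\mathcal{X}}(\mathcal{F}^{\mathrm{al}},\mathcal{G}^{\mathrm{al}})$. This follows from two properties of the correspondence $M\mapsto \widetilde{M}$: exactness on both sides, guaranteed by Tate acyclicity and flatness of $A\to \OO_X(W)$ for any affinoid subdomain $W\subset X$; and monoidality, i.e.\ $\widetilde{M\otimes_A N}\simeq \widetilde{M}\otimes_{\OO_X}\widetilde{N}$, essentially by the same flatness. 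Regularity of $A$ provides finite free resolutions of any coherent module, and these two properties then transfer such a resolution to the analytic side, where it computes the same Tor.

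With Tor-compatibility in hand, part (1) follows directly: on the scheme side, Serre's classical Tor formula on regular catenary Noetherian schemes (see for instance the cited \cite[Section~43]{stacks-project}) expresses the scheme-theoretic intersection multiplicity on $\mathcal{X}$ as the alternating sum of lengths of $\mathrm{Tor}_i^{\OO_{\mathcal{X},\xi}}(\OO_{Y^{\mathrm{al}},\xi},\OO_{\widetilde{Y}^{\mathrm{al}},\xi})$ at the generic point $\xi$ of the relevant component; by definition (\cref{multiplicities of proper intersection}) this is exactly $e(X,Y\cdot\widetilde{Y},Z)$. Combining this with \cref{remark:cycle associated to a coherent sheaf}~(1), which identifies the analytic and algebraic cycle classes of a coherent sheaf on an affinoid, yields the desired equality on $V$, and hence globally by \cref{lemma:localequivalence}. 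For part (2), one can either repeat the same argument with the scheme-theoretic Tor formula phrased for coherent sheaves, or deduce it from (1) by a dévissage: choose filtrations of $\mathcal{F},\mathcal{G}$ whose successive quotients are structure sheaves of integral closed subspaces and use the long exact sequence of Tor together with the additivity of $[\cdot]^k$ in short exact sequences.

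The main obstacle is the transfer of $\mathrm{Tor}$ through the analytic-to-algebraic correspondence: because $\mathrm{Tor}$ is a left-derived functor, compatibility with the equivalence is not automatic from the abelian equivalence alone, but requires that the equivalence be exact and symmetric monoidal. Verifying these (via Tate acyclicity, flatness of affinoid subdomain inclusions, and the explicit formula $\widetilde{M}\otimes_{\OO_X}\widetilde{N}=\widetilde{M\otimes_AN}$) is what makes the reduction legitimate; once that is done, the remainder is a straightforward translation of the known scheme-theoretic Tor formula.
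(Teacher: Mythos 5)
Your proposal is correct and follows essentially the same route as the paper: reduce to the strictly affinoid case by locality of cycles and of proper intersection, identify everything with the corresponding objects on $\Spec(A)$, and invoke the Tor formula for regular catenary Noetherian schemes. The only difference is that you spell out the exactness and monoidality of $M\mapsto\widetilde{M}$ needed to transfer $\mathrm{Tor}$ across the equivalence $\mathcal{C}oh(X)\simeq\mathcal{C}oh(\Spec(A))$, a point the paper leaves implicit.
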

\begin{proof}
	Obviously, (2) implies (1). 
	By \cref{lemma:localequivalence}, \cref{lemma:properlyintersectislocal} and \cref{lemma:flatpullbackpreservesintersection}, we can assume that $X$ is strictly affinoid. Then this is \cite[Lemma~43.19.4]{stacks-project} for regular, catenary Noetherian schemes.
\end{proof}

\section{Projection formula}

\label{projectionformula}

For a $K$-analytic space $X$, we denote $D(\mathcal{C}oh(X))$ the derived category of $\mathcal{C}oh(X)$. 
We have the derived tensor product $\otimes^{\mathbf{L}}$ in $D(\mathcal{C}oh(X))$, see \cite[Definition~20.26.14]{stacks-project}. If $f\colon Y\to X$ is a morphism of $K$-analytic spaces, then we have a left derived functor
\[Lf^*\colon D(\mathcal{C}oh(X))\to D(\mathcal{C}oh(Y)),\]
see \cite[Section~21.18]{stacks-project}. If $f$ is proper, we have a right derived functor
\[Rf_*\colon D(\mathcal{C}oh(Y))\to D(\mathcal{C}oh(X)),\]
see \cite[Section~21.19]{stacks-project}. By adjointness of $(Lf^*, Rf_*)$, we have a morphism
	\[Rf_*(\mathcal{E})\otimes^{\mathbf{L}}_{\OO_X}\mathcal{F}\rightarrow Rf_*(\mathcal{E}\otimes^{\mathbf{L}}_{\OO_Y}Lf^*\mathcal{F}),\]
see \cite[Section~21.50]{stacks-project}. As \cite[Lemma~36.22.1]{stacks-project}, we have a similar result for $K$-analytic spaces.

\begin{lemma} 	\label{lemma:projectionformulainderivedcategory}
	Let $f\colon Y\to X$ be a proper morphism of strictly $K$-analytic spaces. Then for any $\mathcal{F}$ in $D(\mathcal{C}oh(X))$ and $\mathcal{E}$ in $D(\mathcal{C}oh(Y))$, the canonical morphism
	\[Rf_*(\mathcal{E})\otimes^{\mathbf{L}}_{\OO_X}\mathcal{F}\rightarrow Rf_*(\mathcal{E}\otimes^{\mathbf{L}}_{\OO_Y}Lf^*\mathcal{F})\]
	is an isomorphism.
\end{lemma}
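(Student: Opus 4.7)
The plan is to mimic the classical proof of the projection formula in the derived category of a ringed space, as in \cite[Lemma~36.22.1]{stacks-project}, adapted to the G-site $(X_G, \OO_X)$ of a strictly $K$-analytic space. The strategy has three steps: first, G-localize on $X$ to reduce to the strictly affinoid case; second, resolve $\mathcal{F}$ by a complex of finite free coherent sheaves; and third, reduce to the trivial case $\mathcal{F} = \OO_X^n$.

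First I would check that the statement is local on $X$ in the G-topology. For an affinoid domain $V \hookrightarrow X$, set $Y' = f^{-1}(V)$ and $f' = f|_{Y'} \colon Y' \to V$; note $f'$ is proper. The functors $\otimes^{\mathbf{L}}_{\OO_X}$ and $Lf^*$ are compatible with restriction to $V$ and $Y'$ respectively, since the inclusions are flat. The nontrivial ingredient is flat base change for proper pushforward, i.e.\ $(Rf_*\mathcal{E})|_V \simeq Rf'_*(\mathcal{E}|_{Y'})$; for affinoid $V$ this should follow from Kiehl's coherence theorem (\cite[Proposition~3.3.5]{berkovich1990spectral}) applied termwise to a $K$-injective resolution of $\mathcal{E}$. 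This reduction brings us to the case $X = \mathcal{M}(A)$ with $A$ a strictly affinoid (hence Noetherian) algebra.

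In that case, by Tate's acyclicity theorem, coherent sheaves on $X$ correspond to finitely generated $A$-modules, so any bounded above $\mathcal{F} \in D(\mathcal{C}oh(X))$ admits a quasi-isomorphism $P^\bullet \to \mathcal{F}$ with each $P^i$ a finite free $\OO_X$-module. Since both sides of the asserted map respect quasi-isomorphisms in $\mathcal{F}$, we may replace $\mathcal{F}$ by $P^\bullet$. A standard truncation/induction argument, using the stupid-truncation triangles $\sigma^{\geq -n+1}P^\bullet \to \sigma^{\geq -n}P^\bullet \to P^{-n}[n]$, then reduces us to a single term $\mathcal{F} = \OO_X^n$. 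In that case $Lf^*\mathcal{F} = \OO_Y^n$, and the canonical map
\[
Rf_*(\mathcal{E})^{\oplus n} \longrightarrow Rf_*(\mathcal{E}^{\oplus n})
\]
is clearly an isomorphism since $Rf_*$ commutes with finite direct sums.

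The main obstacle is the flat base change isomorphism $(Rf_*\mathcal{E})|_V \simeq Rf'_*(\mathcal{E}|_{Y'})$ used in the localization step. This is the non-archimedean analogue of derived proper base change along a flat open immersion; it is not entirely formal from Kiehl's theorem, which controls only the individual direct image sheaves of $\mathcal{E}$, and a careful argument with $K$-injective (or Cartan--Eilenberg) resolutions of $\mathcal{E}$ is needed to upgrade it to the derived level. Once this technical point is handled, the remaining steps parallel the scheme-theoretic proof verbatim.
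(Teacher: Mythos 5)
Your proof follows essentially the same route as the paper's: reduce to $X=\mathcal{M}(A)$ affinoid and then reduce the statement in $\mathcal{F}$ to (shifts of) finite free modules, exactly as in the Stacks project's scheme-theoretic proof that both of you cite; the paper packages your resolution-plus-truncation step as the observation that the class of $M\in D(A)$ for which the map is an isomorphism is triangulated, closed under direct sums, and contains $A[n]$. The only point worth flagging is that the localization step you single out as the main obstacle is in fact formal for the G-site (derived pushforward commutes with restriction to an analytic domain of the base), whereas the place that genuinely needs a direct-sum/homotopy-colimit compatibility is the passage from the stupid truncations to an unbounded-above complex $P^\bullet$.
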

\begin{proof}
The proof is similar with the proof of \cite[Lemma~36.22.1]{stacks-project}. We can assume that $X=\mathcal{M}(A)$ is affinoid. In this case, $D(\mathcal{C}oh(X))$ is the derived category of finitely generated $A$-modules, which is a subcategory of $D(A)$, the derived category of $A$-modules. We fix a coherent sheaf $\mathcal{E}$ on $Y$. For an object $M$ in $D(A)$, we say that $T(M)$ holds if the morphism 
\[Rf_*(\mathcal{E})\otimes^{\mathbf{L}}_{\OO_X}\widetilde{M}\rightarrow Rf_*(\mathcal{E}\otimes^{\mathbf{L}}_{\OO_Y}Lf^*\widetilde{M})\] is an isomorphism, where $\widetilde{M}$ is the corresponding sheaf of $M$ on $X$. 

If $M=\bigoplus\limits_{i}M_i$ and $T(M_i)$ holds, then $T(M)$ holds. Let $N\rightarrow L\rightarrow M\rightarrow N[1]$ be a distinguished triangle in $D(A)$. If $T$ holds for two of $N,L,M,$ then it holds for the third. Notice that $T(A[n])$ holds for any $n\in\Z$, where $A[n]$ is the $n$-th shift of $A$ in $D(A)$. Hence $T(M)$ holds for any object $M$ in $D(A)$, see \cite[Remark~15.59.11]{stacks-project}.
\end{proof}

\begin{theorem}[Projection formula] 	\label{theorem:projectionformula}
	Let $f\colon Y\to X$ be a flat, proper morphism of regular, separated, strictly $K$-analytic spaces. Let $\alpha\in Z^*(Y)$ and $\beta\in Z^*(X)$. Assume that $\alpha$ and $f^*\beta$ intersect properly. Then $f_*(\alpha)$ and $\beta$ intersect properly and 
	\[f_*(\alpha)\cdot\beta=f_*(\alpha\cdot f^*\beta).\]
\end{theorem}
\begin{proof}
Our proof is an analytic version of the proof of \cite[Lemma~43.22.1]{stacks-project}.
	
By \cref{lemma:properlyintersectislocal}, \cref{intersection product preserved under flat pullback for analytic spaces} and \cref{lemma:localequivalence}, we can assume that $X=\mathcal{M}(A)$ is affinoid and regular. Moreover, we assume that $\alpha=[Z], \beta=[W]$ for some irreducible closed subspaces of dimension $r$ and $s$.  

If $\dim_Kf(Z)\not=\dim_KZ$, then $f_*[Z]=0$, so $f_*[Z]$ and $[W]$ intersect properly. It suffices to show that $f_*([Z]\cdot f^*[W])=0$. We consider the morphism $Z\rightarrow f(Z)$, where $f(Z)$ is endowed with the reduced subspace structure. By \cref{proposition:proper equidimension}, every fiber of $Z\rightarrow f(Z)$ has dimension $\geq 1$. This implies that every fiber of the morphism $Z\cap f^{-1}(W)\rightarrow f(Z)\cap W$ has dimension $\geq 1$. Since $Z\cap f^{-1}(W)$ is equidimensional by our assumption, for any Abhyankar point $x\in f(Z)\cap W$, we have that $\dim_{\mathscr{H}(x)}Z_x=\dim_K(Z\cap f^{-1}(W))-\dim_xf(Z)\cap W\geq 1$ by \cite[Lemma~1.5.11]{ducros2018families}. So $\dim_K(Z\cap f^{-1}(W))>\dim_K(f(Z)\cap W)$, we conclude that $\dim_KT>\dim_Kf(T)$ for any irreducible component $T$ of $Z\cap f^{-1}(W)$. This implies what we want.

Assume that $\dim_Kf(Z)=\dim_KZ=r$. Let $T\subset f(Z)\cap W$ be an irreducible component, and $T_i\subset Z\cap f^{-1}(W)$, $i=1,\cdots, t$ the irreducible components of $Z\cap f^{-1}(W)$ dominating $T$. Since  $Z, f^{-1}(W)$ intersect properly, we have that \[\dim_KT\leq \dim_KT_i = \dim_KY-(\dim_KY-r+\dim_KX-s) = r+s-\dim_KX.\]
Then $f(Z)$ and $W$ intersect properly, and $f\colon T_i\to T$ is generically finite by \cref{lemma: codimension inequality} and \cref{proposition:proper equidimension}.
To show the equality, we follow the same idea of the proof of \cite[Lemma~42.23.1]{stacks-project}. Since $f$ is flat, by \cref{lemma:projectionformulainderivedcategory}, we have 
\[Rf_*(\OO_Z)\otimes^{\mathbf{L}}_{\OO_X}\OO_W\simeq Rf_*(\OO_Z\otimes^{\mathbf{L}}_{\OO_Y}f^*\OO_W).\]
So for any generic point $\xi\in \Spec(A)$ corresponding to an irreducible component of $f(Z)\cap W$, we have
\begin{align}
(f_*\mathrm{Tor}_i^{\OO_Y}(\OO_Z, f^*\OO_W))_\xi= (\mathrm{Tor}_i^{\OO_X}(f_*\OO_Z, \OO_W))_\xi.
\label{equalityforprojectionformula}
\end{align}
On the other hand, by \cref{prop:intersectionmultiplicityusingtorformula} and \cref{prop:pushforwordofcoherentsheaves}, we have
\begin{align*}
	f_*([Z]\cdot f^*[W])&=\sum\limits_i(-1)^if_*[\mathrm{Tor}_i^{\OO_Y}(\OO_Z, f^*\OO_W)]_{r+s-\dim_KY}\\
	&=\sum\limits_i(-1)^i[f_*\mathrm{Tor}_i^{\OO_Y}(\OO_Z, f^*\OO_W)]_{r+s-\dim_KY},
\end{align*}
\begin{align*}
	f_*[Z]\cdot [W] &= [f_*\OO_Z]\cdot [W]\\
	&=\sum\limits_i(-1)^i[\mathrm{Tor}_i^{\OO_X}(f_*\OO_Z, \OO_W)]_{r+s-\dim_KY}.
\end{align*}
Then $f_*([Z]\cdot f^*[W]) = f_*[Z]\cdot [W]$ by \cref{equalityforprojectionformula}.
\end{proof}

\section{GAGA}	
	
	\label{GAGA}
	
	It is natural to expect that our definitions of cycles, flat pull-backs, proper push-forwards and intersection products, for algebraic variety will be coincide with the ones in the intersection theory of algebraic varieties.

\begin{proposition} \label{prop:analyfication of cycles}
Let $X$ be an algebraic variety over $K$. Then we have an injective homomorphism $Z^*(X) \hookrightarrow Z^*(X^\an), \ \ [Y]\mapsto [Y^\an]$ which is an isomorphism if $X$ is proper. For a cycle $\alpha\in Z^*(X)$, we denote $\alpha^\an$ its image in $Z^*(X^\an)$. Moreover, the following properties hold.
 \begin{enumerate}
 	\item[(1)] For any affinoid domain $V$ contained in some affine open subset of $X^\an$, the diagram diagram commutes:
 	\[\xymatrix{Z^*(X)\ar[d]\ar[r]&Z^*(\mathcal{V})\ar[d]\\
 		Z^*(X^\an)\ar[r]&Z^*(V)},\]
 	where $\mathcal{V} = \Spec(\OO_{X^\an}(V))$. 
 	\item[(2)] Let $\alpha, \beta\in Z^*(X)$. Then $\alpha=\beta\in Z^*(X)$ (or $\alpha^\an=\beta^\an\in Z^*(X^\an)$) if and only if $i^*\alpha=i^*\beta\in Z^*(\mathcal{V})$ for any any affinoid domain $V$ contained in some affine open subset of $X^\an$, where  $\mathcal{V} = \Spec(\OO_{X^\an}(V))$ and $i\colon \mathcal{V}\to X$ is the canonical morphism. 
 \end{enumerate}
\end{proposition}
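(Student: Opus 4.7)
The plan is to construct the isomorphism by first establishing a codimension-preserving bijection $\overline{\mathrm{Irr}(X)}\xrightarrow{\sim}\overline{\mathrm{Irr}(X^\an)}$ via $Y\mapsto Y^\an$, and then extending $\Z$-linearly. To verify this bijection, I would check: (a) if $Y\subset X$ is an integral closed subscheme then $Y^\an\subset X^\an$ is an irreducible Zariski-closed subspace, which is the standard theorem that analytification preserves irreducibility of varieties; (b) conversely, every $Z\in\overline{\mathrm{Irr}(X^\an)}$ is of the form $Y^\an$ for a unique integral $Y\subset X$; and (c) $\dim_K Y=\dim_K Y^\an$, so $\codim(Y,X)=\codim(Y^\an,X^\an)$ by Proposition~1.11 of \cite{ducros2007variation} already cited in \cref{preliminary}. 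The main obstacle is step (b): Zariski-closed subspaces of $X^\an$ are defined by coherent ideals of $\OO_{X^\an}$, and I would reduce to an affine open $U=\Spec A\subset X$ and verify that coherent ideals of $\OO_{U^\an}$ are in bijection with coherent ideals of $\OO_U$ via the faithfully flat correspondence passing through the affinoid domains of $U^\an$; the irreducibility on the analytic side then pins down the prime ideal on the algebraic side since every prime of $A$ corresponds to a point of $U^\an$ over which localisation is preserved.

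For (1), I would check on generators $[Y]\in Z^*(X)$. Write $B=\OO_{X^\an}(V)$ and let $U\subset X$ be an affine open with $V\subset U^\an$; the ring map $\OO_X(U)\to B$ is flat. The right-hand route sends $[Y]$ to $[Y\cap U\times_U\mathcal{V}]$ computed with scheme-theoretic multiplicities, and the left-hand route sends $[Y]$ to $[V\cap Y^\an]$ computed with analytic multiplicities. These coincide under $Z^*(\mathcal{V})\simeq Z^*(V)$: by \cref{pull-back of cycles from coherent sheaf} and \cref{remark:cycle associated to a coherent sheaf}~(1), the comparison reduces to showing that for the canonical flat morphism $\mathcal{V}\to \Spec\OO_X(U)$ the cycle associated to the pulled-back structure sheaf of $Y\cap U$ agrees, under the algebraic--analytic identification, with the pulled-back structure sheaf of $Y^\an\cap V$; this is a local computation of lengths under flat base change.

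For (2), the "only if" direction is functoriality of flat pull-back combined with (1). For the converse, I would choose a G-covering $X^\an=\bigcup_{i\in I}V_i$ by affinoid domains with each $V_i$ contained in $U_i^\an$ for some affine open $U_i\subset X$; this covering exists because the open sets $U_i^\an$ cover $X^\an$ as $U_i$ ranges over an affine open cover of $X$, and each $U_i^\an$ admits a G-covering by affinoid domains. The hypothesis combined with (1) then yields $\alpha^\an|_{V_i}=\beta^\an|_{V_i}$ for every $i$, and \cref{lemma:localequivalence} upgrades this to the global equality $\alpha^\an=\beta^\an\in Z^*(X^\an)$, whence $\alpha=\beta$ by the isomorphism established in the first paragraph.
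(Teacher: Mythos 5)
Your argument for surjectivity of $[Y]\mapsto[Y^{\an}]$ --- step (b), where you claim that for an affine open $U=\Spec A\subset X$ the coherent ideals of $\OO_{U^{\an}}$ are in bijection with the coherent ideals of $\OO_U$ --- has a genuine gap: no such bijection exists. Analytification induces an equivalence on coherent sheaves only for \emph{proper} varieties; on an affine variety there are many coherent ideal sheaves that are not analytifications of algebraic ideals. For example, on $\mathbb{A}^{1,\an}_K$ the ideal sheaf of an infinite discrete set of rigid points $\{x_n\}$ with $|x_n|\to\infty$ is coherent (on each affinoid disc it is cut out by a polynomial, since only finitely many $x_n$ lie there), but it is not the analytification of any ideal of $K[T]$; the same goes for the principal ideal generated by a non-polynomial entire function. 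Flatness of $A\to\OO_{U^{\an}}(V)$ for affinoid $V\subset U^{\an}$ lets you pass from algebraic ideals to analytic ones, but there is no descent in the reverse direction --- this is precisely the failure of GAGA for non-proper spaces. Restricting attention to irreducible closed subsets does not rescue the local argument either: the graph of a non-polynomial entire function inside $\mathbb{A}^{2,\an}_K$ is an irreducible Zariski-closed subspace that is not the analytification of any closed subscheme of $\mathbb{A}^2_K$, so irreducibility alone cannot ``pin down a prime ideal of $A$''. Algebraicity is a global phenomenon, and the paper's proof uses a global input: Berkovich's GAGA theorem for proper $X$, and for general $X$ a Nagata compactification $X\subset\overline{X}$, taking the Zariski closure $\overline{Z}$ of $Z$ in $\overline{X}^{\an}$, algebraizing $\overline{Z}$ there, and intersecting back with $X^{\an}$. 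Any correct proof must pass through such a global step; a purely local analysis over affine opens cannot distinguish algebraic subvarieties from merely analytic ones.

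The remaining parts of your proposal are fine and essentially coincide with the paper's: injectivity and the preservation of codimension via the Ducros comparison, part (1) by reduction to \cref{remark:cycle associated to a coherent sheaf}~(1) together with flatness of $\mathcal{V}\to U$, and part (2) by combining (1) with a G-covering of $X^{\an}$ by affinoid domains sitting inside analytified affine opens and applying \cref{lemma:localequivalence}.
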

\begin{proof}
The map is obviously injective. When $X$ is proper, it suffices to show that every integral closed subspace $Z$ of $X^\an$ is algebraic. This is \cite[Proposition~3.4.11]{berkovich1990spectral}. 

(1) The diagram is directly from the definition of $[Y^\an]$ and \cref{remark:cycle associated to a coherent sheaf}.

(2) This is from the injectivity $Z^*(X) \hookrightarrow Z^*(X^\an)$, the commutative diagram in (1) and \cref{lemma:localequivalence}.
\end{proof}

\begin{proposition} \label{prop:GAGA}
Let $f\colon Y\to X$ be a morphism of algebraic varieties over $K$. We have the following hold.
\begin{enumerate}
	\item[(1)] Let $\mathcal{F}$ be a coherent sheaf on $X$. Then $[\mathcal{F}]^\an = [\mathcal{F}^\an]$.
	\item[(2)] We have a canonical homomorphism $\Div(X)\rightarrow \Div(X^\an), \ \ D\mapsto D^\an$ such that for any $D\in \Div(X)$, we have $[D]^\an = [D^\an]$. 
	\item[(3)] If $\varphi$ is flat with $X, Y$ equidimensional, let $\alpha\in Z^*(X)$, then $(\varphi^*(\alpha))^\an = (\varphi^\an)^*(\alpha^\an)$.
	\item[(4)] If $\varphi$ is proper and $\beta\in Z^*(Y)$, then $(\varphi_*(\beta))^\an = (\varphi^\an)_*(\beta^\an)$.
	\item[(5)] Let $\alpha, \beta\in Z^*(X)$. Then $\alpha, \beta$ intersect properly if and only if $\alpha^\an, \beta^\an\in Z^*(X^\an)$ intersect properly, and in this case, we have $(\alpha\cdot\beta)^\an= \alpha^\an\cdot\beta^\an$. 
\end{enumerate}
\end{proposition}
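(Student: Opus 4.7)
The plan is to reduce every assertion to an affinoid-local statement via the preceding proposition. Fix a G-covering of $X^{\an}$ by affinoid domains $V$ such that each $V$ is contained in some affine open $U\subset X$; write $\mathcal{V}=\Spec(\OO_{X^{\an}}(V))$ and let $i\colon\mathcal{V}\to X$ be the canonical morphism, which is flat because $\OO_X(U)\to\OO_{X^{\an}}(V)$ is flat. By part~(2) of the previous proposition combined with \cref{lemma:localequivalence}, any equality of cycles on $X^{\an}$ can be tested by checking that the pullbacks to each such $\mathcal{V}$ agree as cycles on $\mathcal{V}$. This reduces everything to scheme-theoretic facts applied to $i$ and to the algebraic morphisms induced by $\varphi$ over affinoid subsets.

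For (1), \cref{remark:cycle associated to a coherent sheaf}(1) gives $[\mathcal{F}^{\an}]|_V=[(\mathcal{F}^{\an}|_V)^{\mathrm{al}}]=[i^*\mathcal{F}]$ on $\mathcal{V}$, while the previous proposition (part~(1) for cycles) combined with the algebraic identity $i^*[\mathcal{F}]=[i^*\mathcal{F}]$ for flat pullback of coherent cycles in scheme theory gives $[\mathcal{F}]^{\an}|_V=i^*[\mathcal{F}]=[i^*\mathcal{F}]$. Hence the two sides agree locally, so they agree globally. For (2), given $D\in\Div(X)$ represented by $\{(U_i,f_i)\}$, I define $D^{\an}$ by covering each $U_i^{\an}$ by affinoid domains $V$ and taking $f_i|_V\in K'_{X^{\an}}(V)$, using that the flat map $\OO_X(U_i)\to\OO_{X^{\an}}(V)$ sends regular elements to regular elements (as in Remark~(1) after \cref{sheaf of meromorphic functions}). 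The identity $[D]^{\an}=[D^{\an}]$ then reduces to the case of a principal divisor on an affinoid, which follows from (1) applied to $\OO_X/(f)$.

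For (3), it is enough to treat $\alpha=[Z]$ with $Z\subset X$ integral. Since analytification preserves Cartesian diagrams, $(\varphi^{-1}Z)^{\an}=(\varphi^{\an})^{-1}(Z^{\an})$, and both sides of the target equality equal $[(\varphi^{\an})^{-1}(Z^{\an})]$ by \cref{pull-back of cycles from coherent sheaf} (applied on the algebraic side via $\varphi$, analytified, and on the analytic side via $\varphi^{\an}$). For (4), properness of $\varphi$ is preserved by analytification, so $(\varphi^{\an})_*$ is defined. Reducing to $\alpha=[Z]$ integral, the case $\dim\varphi(Z)<\dim Z$ analytifies to the corresponding non-finite case because dimensions are preserved under analytification (this is where one invokes \cref{lemma:equivalentconditionforsurjectivefinite}); in the finite case one must verify $\deg(Z/\varphi(Z))=\deg(Z^{\an}/\varphi(Z)^{\an})$, which follows by choosing an affinoid $V$ meeting $\varphi(Z)$ and applying \cref{lemma:welldefinedofdegree} together with the fact that $\Frac(A_Q)$ for an irreducible component $Q$ of $Z^{\an}\cap\varphi^{-1}(V)$ coincides with the residue field of $Z$ at the corresponding generic point of the algebraic base change (since the base change map is flat and the minimal primes of $B\otimes_{\OO_X(U)}\OO_{X^{\an}}(V)$ correspond to the irreducible components on the analytic side).

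For (5), the proper-intersection condition is purely dimensional and is preserved by the isomorphism $Z^*(X)\simeq Z^*(X^{\an})$, so the first clause is automatic. For the equality $(\alpha\cdot\beta)^{\an}=\alpha^{\an}\cdot\beta^{\an}$, I use \cref{prop:intersectionmultiplicityusingtorformula} to write both sides as alternating sums of analytified $\mathrm{Tor}$-cycles, then invoke (1) to pass from $[\mathrm{Tor}_i^{\OO_X}(\OO_Y,\OO_{\widetilde Y})]^{\an}$ to $[\mathrm{Tor}_i^{\OO_{X^{\an}}}(\OO_{Y^{\an}},\OO_{\widetilde Y^{\an}})]$; here the sheaf-level identification $(\mathrm{Tor}_i^{\OO_X}(\mathcal{F},\mathcal{G}))^{\an}\cong\mathrm{Tor}_i^{\OO_{X^{\an}}}(\mathcal{F}^{\an},\mathcal{G}^{\an})$ comes from flatness of analytification and the vanishing of higher Tors of flat base change, checked on each affinoid $V$ using $\mathcal{C}oh(V)\simeq\mathcal{C}oh(\mathcal{V})$. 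The main obstacle I anticipate is the degree computation in (4): one must show that the contributions from distinct irreducible components of $Z^{\an}\cap\varphi^{-1}(V)$ lying over the same algebraic generic point sum correctly to the algebraic degree, and this requires a careful use of \cref{multiplicity of a coherent sheaf} together with the fact that a strict affinoid algebra over an affine algebra preserves the splitting of generic fibers faithfully.
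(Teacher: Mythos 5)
Your proposal is correct and follows essentially the same strategy as the paper's proof: reduce each assertion to affinoid domains contained in affine opens via the flat morphisms $\mathcal{V}=\Spec(\OO_{X^{\an}}(V))\rightarrow X$, part (2) of the preceding proposition and \cref{lemma:localequivalence}, and then invoke the corresponding scheme-theoretic facts. The only cosmetic deviations are that you prove (5) via the Tor formula of \cref{prop:intersectionmultiplicityusingtorformula} where the paper argues ``as in (3)'' through compatibility of flat pullback with intersection products, and that your degree identity in (4) is left as a sketch where the paper writes out the explicit computation $[\Frac(B):\Frac(A)]=\sum_{\mathfrak{q}}[(\Frac(B)\otimes_AA')_{\mathfrak{q}}:(\Frac(A)\otimes_AA')_{\mathfrak{p}}]$ over the minimal primes of the flat base change --- both rest on the same fact.
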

\begin{proof}
(1) Let $V=\mathcal{M}(B)\subset X^\an$ be an affinoid domain contained in some affine open subsets of $X^\an$. Then we have a canonical morphism $\varphi\colon \Spec(A)\to X$ which is flat by \cite[TH\'EOR\`EM~3.3]{ducros2009les}. It is sufficient to show that $[\mathcal{F}]^\an|_V=[\mathcal{F}^\an]|_V$. By the commutative diagram in \cref{prop:analyfication of cycles}~(1), we have that $[\mathcal{F}]^\an|_V = [\varphi^*\mathcal{F}]$ if we identify $Z^*(V)$ and $Z^*(\mathcal{V})$. On the other hand, by  \cref{remark:cycle associated to a coherent sheaf}, we have that $[\mathcal{F}^\an]|_V = [\mathcal{F}^\an|_V]$. So our claim holds.

(2) The homomorphism is given by the fact that $\mathcal{V}\rightarrow X$ is flat for any an affinoid domain $V=\mathcal{M}(A)\subset X^\an$ contained in some affine open subsets of $X^\an$, where $\mathcal{V}=\Spec(A)$. Then the compatibility on such affinoid domains will induce a divisor on $X$. The equality can be proved as (1).

(3) We take arbitrary affinoid domains $V=\mathcal{M}(A)\subset X^\an$ and $W=\mathcal{M}(B)\subset Y^\an$ such that $\varphi^\an(W)\subset V$ and $V$, $W$ are contained in some affine open subsets of $X^\an$, $Y^\an$ respectively. Let $\mathcal{V} = \Spec(A)$, $\mathcal{W} = \Spec(B)$. We have the following commutative diagram
\[\xymatrix{\mathcal{W}\ar[d]_-j\ar[r]^-{\widetilde{\varphi}}& \mathcal{V}\ar[d]^-i\\
Y\ar[r]_-\varphi&X}\]
Then
\[(\varphi^*(\alpha))^\an|_W = j^*\varphi^*(\alpha) = \widetilde{\varphi}^*i^*(\alpha)=(\varphi^\an|_W)^*(\alpha^\an|_V)=(\varphi^\an)^*(\alpha^\an)|_W.\]
here we identify the canonical isomorphisms $Z^*(V)\simeq Z^*(\mathcal{V})$ and $Z^*(W)\simeq Z^*(\mathcal{W})$. By \cref{lemma:localequivalence}, (3) follows.

(4) Since $\varphi$ is proper, we have that $\varphi^\an$ is proper. We may assume that $\beta$ is prime, moreover, assume that $X, Y$ are integral and $\beta=[Y]$, $\varphi$ is generically finite, surjective. After shrinking $X$, we can assume that $\varphi$ is finite. Moreover, we can assume that $X=\Spec(A)$ and $Y=\Spec(B)$ are affine. Let $V=\mathcal{M}(A')\subset X^\an$ be an affinoid domain, and $U=(\varphi^\an)^{-1}(V) = \mathcal{M}(A'\otimes_AB)$. Notice that $\Frac(B) = B\otimes_A\Frac(A)$. We consider the following diagram
\[\xymatrix{\Frac(A)\otimes_AA'\ar[r]& \Frac(B)\otimes_AA'\\
\Frac(A)\ar[r]\ar[u]& \Frac(B)\ar[u]}.\]
The homomorphism $\Frac(A)\rightarrow \Frac(B)$ is finite, so $\Frac(A)\otimes_AA'\rightarrow \Frac(B)\otimes_AA'$ is finite and flat. For any generic point $\mathfrak{p}\in \Spec(\Frac(A)\otimes_AA')$, we have that
\[[\Frac(B)\colon\Frac(A)] = \sum\limits_{\mathfrak{q}, \varphi(\mathfrak{q})=\mathfrak{p}}[(\Frac(B)\otimes_AA')_{\mathfrak{q}}\colon (\Frac(A)\otimes_AA')_{\mathfrak{p}}]\]
where $\mathfrak{q}$ runs through the minimal ideal of $\Frac(B)\otimes_AA'$ such that $\varphi(\mathfrak{q})=\mathfrak{p}$, and we view $\varphi\colon \Spec(\Frac(B)\otimes_AA')\to \Spec(\Frac(A)\otimes_AA')$. The right-handed side is exactly $d(V,\overline{\{\mathfrak{p}\}}^{V_{\zar}})$  given in the proof of \cref{lemma:welldefinedofdegree} and equal to $\deg(Y^\an/X^\an)$, so (4) holds.

(5) We can assume that $\alpha,\beta$ are prime. Since flat pull-backs preserve proper intersection, by \cref{lemma:properlyintersectislocal}, we know that $\alpha, \beta$ intersect properly if and only if $\alpha^\an, \beta^\an\in Z^*(X^\an)$ intersect properly. The proof of the equality is similar with the proof of (3).
\end{proof}

\section{The category of finite correspondences}

\label{The category of finite correspondences}

In this section, we will define the additive category $\mathrm{Cor}_K$ of finite correspondences of $K$-analytic spaces. We will follow the notation in \cite{ayoub2015motifs} and the idea in \cite[Lecture~1]{mazza2006lecture}.

For the $K$-analytic spaces in this section, we always mean separated, quasi-paracompact (see \cite[Definition~8.12]{bosch2014lecture} for quasi-paracompactness), strictly $K$-analytic spaces, the category of such spaces is exactly the category of separated, quasi-paracompact, $K$-rigid spaces by \cite[Theorem~1.6.1]{berkovich1993etale}.

A $K$-analytic space is said to be \emph{quasi-smooth} if it is geometrically regular at each point, see \cite[Corollary~5.3.5]{ducros2018families}. In particular, a quasi-smooth space is regular.

\begin{definition}
	Let $X$ be a quasi-smooth, connected $K$-analytic space, and $Y$ a $K$-analytic space. An \emph{elementary correspondence} from $X$ to $Y$ is an irreducible closed subset $W$ of $X\times Y$ whose associated integral subspace is finite and surjective over $X$.
	
	For an elementary correspondence from a quasi-smooth non-connected $K$-analytic space $X$ to $Y$, we mean an elementary correspondence from a connected component of $X$ to $Y$.
	
	The group $\Cor_K(X,Y)$ is the free abelian group generated by the elementary correspondences from $X$ to $Y$, so $\Cor_K(X,Y)\subset Z^*(X\times Y)$. An element of $\Cor_K(X,Y)$ are called a \emph{finite correspondence}. By definition, if $X=\coprod\limits_i X_i$ is the decomposition into its connected components, we have $\Cor_K(X,Y)=\bigoplus\limits_i\Cor_K(X_i,Y)$.
\end{definition}

\begin{remark}
	Let $X$ be a quasi-smooth $K$-analytic space, and $Y$ a $K$-analytic space. Every closed subspace $Z$ of $X\times Y$ which is finite and surjective over $X$ determines a finite correspondence $[Z]$ from $X$ to $Y$. Indeed, we consider the case where $X$ is connected (hence irreducible). We can write $[Z]=\sum\limits_in_i[Z_i]$, where $Z_i$ are irreducible component of $Z$ such that $Z_i\rightarrow X$ is surjective, and $n_i$ is the geometric multiplicity of $Z_i$ of $Z$.
\end{remark}

To define the composition of morphism in the category $\Cor_K$, we need the following lemmas.

\begin{lemma}	\label{lemma:imageoffiniteisfinite}
	Let $f\colon T\to T'$ be a morphism of $K$-analytic spaces over another $K$-analytic space $S$. Let $W$ be an irreducible Zariski-closed subset of $T$ which is finite and surjective over $S$. Then $f(W)$ is irreducible, Zariski-closed in $T'$ and finite, surjective over $S$. 
\end{lemma}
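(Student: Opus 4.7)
The strategy is to transfer the problem to a proper morphism, use Remmert's mapping theorem (Kiehl's coherence theorem) to get Zariski-closedness, and finally check finiteness via the fiber criterion of \cref{lemma:equivalentconditionforsurjectivefinite}.

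First, I endow $W$ with its reduced closed-subspace structure, so $W$ becomes an integral strictly $K$-analytic space, and the hypothesis says that the structural morphism $W \to S$ is finite, surjective, and in particular proper. Since all spaces in this section are separated, the morphism $T' \to S$ is separated, and therefore the graph/factorization argument gives that the restriction $f|_W \colon W \to T'$ is proper. By Kiehl's theorem (Remmert's mapping theorem, quoted just before \cref{lemma:welldefinedofdegree}), the image $f(W)$ is Zariski-closed in $T'$. It is irreducible since it is the continuous image of the irreducible space $W$. Endow $f(W)$ with the reduced closed-subspace structure; then by \cref{lemma:zariskiimageforreducedspace}, $f(W)$ is the Zariski image of $W \to T'$, and in particular $W \twoheadrightarrow f(W)$ is surjective.

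Second, I show that $f(W) \to S$ is proper and surjective. Surjectivity is immediate: $W \to S$ is surjective and factors as $W \twoheadrightarrow f(W) \to S$. For properness, the composition $W \to f(W) \to S$ is proper and the first arrow is proper and surjective; by the general fact that a map whose precomposition with a proper surjection is proper is itself proper (universal closedness transfers along surjective proper maps, and separatedness is inherited from $T' \to S$), we obtain that $f(W) \to S$ is proper.

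Third, I verify the finiteness of $f(W) \to S$ by checking criterion (iv.b) in \cref{lemma:equivalentconditionforsurjectivefinite}. Pick any rigid point $s \in S$. Since $W \to S$ is finite and surjective, $W_s$ is a non-empty finite set of rigid points of the $\mathscr{H}(s)$-analytic space $W \times_S \mathcal{M}(\mathscr{H}(s))$. Base-changing the surjection $W \twoheadrightarrow f(W)$ along $\mathcal{M}(\mathscr{H}(s)) \to S$ yields a surjection $W_s \twoheadrightarrow f(W)_s$, so $f(W)_s$ is the set-theoretic image of a finite set of rigid points. Since $\mathscr{H}(y) \subset \mathscr{H}(w)$ whenever $y = f(w)$, and $\mathscr{H}(w)$ is finite over $\mathscr{H}(s)$, the image points are again rigid over $\mathscr{H}(s)$. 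Hence $f(W)_s$ is a finite set of rigid points, so $\dim_{\mathscr{H}(s)} f(W)_s = 0$, and \cref{lemma:equivalentconditionforsurjectivefinite} gives that $f(W) \to S$ is finite.

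\textbf{Main obstacle.} The conceptually cleanest step is the first (properness of $f|_W$ and Zariski-closedness of $f(W)$), which is essentially formal; the technical point I expect to require care is verifying that the base-changed map $W_s \twoheadrightarrow f(W)_s$ is really surjective on the underlying topological spaces of Berkovich spaces, and that rigidity of points is preserved under $f$. Both are standard facts about Berkovich spaces but should be invoked explicitly.
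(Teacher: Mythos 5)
Your overall architecture matches the paper's: properness of $W\to T'$ plus Remmert's mapping theorem gives that $f(W)$ is irreducible and Zariski-closed, and then one reduces finiteness of $f(W)\to S$ to properness plus a fiber condition. The problem is in your second step. You justify properness of $f(W)\to S$ by ``the general fact that a map whose precomposition with a proper surjection is proper is itself proper (universal closedness transfers along surjective proper maps, and separatedness is inherited).'' That is the scheme-theoretic cancellation argument, and it does not apply to the Berkovich notion of properness, which is not ``universally closed $+$ separated'' but ``topologically proper (compact) $+$ empty relative boundary $\partial(\,\cdot\,/S)$.'' The topological half does descend along a continuous surjection, but nothing in your argument addresses the relative boundary of $f(W)$ over $S$, and that is exactly the non-formal content of this step. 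The paper's proof fills this in with Berkovich's interior formula
\[\mathrm{Int}(W/S)=\mathrm{Int}(W/f(W))\cap (f|_W)^{-1}\bigl(\mathrm{Int}(f(W)/S)\bigr),\]
combined with $\mathrm{Int}(W/S)=W$ (since $W\to S$ is finite, hence boundaryless) and surjectivity of $W\to f(W)$, to conclude $\mathrm{Int}(f(W)/S)=f(W)$, i.e.\ $\partial(f(W)/S)=\emptyset$; quasi-compactness of $f(W)\to S$ is checked separately. Without an argument of this kind (or an explicit citation of a descent-of-properness result valid for analytic spaces), your proof of properness is a gap, and everything downstream --- including your appeal to criterion (iv.b) of \cref{lemma:equivalentconditionforsurjectivefinite}, whose hypotheses already include properness --- rests on it.

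Your third step is otherwise fine and close in spirit to the paper, which simply invokes the characterization ``finite $=$ proper with finite fibers'' rather than the one-fiber criterion; two cosmetic points are that \cref{lemma:equivalentconditionforsurjectivefinite} is stated for integral source and target, so you should observe that $S$ is irreducible (being the image of the irreducible $W$) and pass to reduced structures, and that surjectivity of $W_s\to f(W)_s$ on underlying points does deserve the explicit justification you flag.
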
 
\begin{proof}
	Since $T\rightarrow S$ is separated ($T$ and $S$ are separated), $W\rightarrow S$ is finite, hence proper by \cite[Corollary~3.3.8]{berkovich1990spectral}, we have that $W\rightarrow T'$ is proper, see \cite[\S~9.6.2, Proposition~4]{bosch1984nonarchimedean}. So $f(W)$ is irreducible and Zariski-closed in $T'$. 
	
	We replace $T, T'$ by $W, f(W)$ respectively, so we assume that $T\to S$ is finite and surjective, and $T\to T'$ is proper and surjective. By \cite[Corollary~3.3.8]{berkovich1990spectral}, it remains to show that $T'$ is proper over $S$. Obviously $T' \rightarrow  S$ is quasi-compact since $T\rightarrow T'$ is surjective and $T'\rightarrow S$ quasi-compact. By \cite[Proposition~2.5.8~(iii)]{berkovich1990spectral}, we have 
	\[T=\mathrm{Int}(T/S) = \mathrm{Int}(T/T')\cap f^{-1}(\mathrm{Int}(T'/S))=f^{-1}(\mathrm{Int}(T'/S)),\]
	this implies that $\mathrm{Int}(T'/S) = T'$, i.e. $\partial(T'/S)=\emptyset$. So $T'$ is proper over $S$.
\end{proof}
\begin{remark} \label{graph of morphism}
	Let $f\colon X\to Y$ be a morphism of $K$-analytic spaces with $X$ quasi-smooth, and $\Gamma_f$ the {graph of $f$}, see \cref{def:graph of a morphism}. Then $\Gamma_f$ is a finite correspondence from $X$ to $Y$. Indeed, we can assume that $X$ is connected, hence irreducible. By By the lemma above with $S=T=W=X$, $T'=X\times Y$, we have that $(\id_X,f)(X)$ is irreducible, Zariski-closed in $X\times Y$ and finite, surjective over $X$. By \cref{lemma:zariskiimageforreducedspace}, we have that $\Gamma_f=(\id_X,f)(X)$, this proves our claim. Moreover, as a cycle on $X\times Y$, we have that $[\Gamma_f]=(\id_X,f)_*([X])$
\end{remark}	

\begin{lemma} 	\label{lemma:componentoffiberproductissurjective}
	Let $Z$ be an integral $K$-analytic space, finite and surjective over a normal $K$-analytic space $S$. Then for every morphism $S'\rightarrow S$ with $S'$ connected (resp. irreducible), every connected (resp. irreducible) component of $Z\times_SS'$ is finite and surjective over $S'$.
\end{lemma}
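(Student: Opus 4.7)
My plan is to reduce to the affinoid setting and then transfer the question to affine schemes, where it will follow from classical results on integral extensions of normal Noetherian domains.

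First, since finiteness and surjectivity are preserved by base change, $f: Z' := Z \times_S S' \to S'$ is finite and surjective; in particular, any Zariski-closed subspace of $Z'$ is automatically finite over $S'$, so it remains only to establish surjectivity of each component onto $S'$.

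For the irreducible case, I will argue by dimension. Take an irreducible component $T$ of $Z'$ and a point $t \in T$ belonging to no other irreducible component, so that $\dim_t Z' = \dim T$. Since $f$ has zero-dimensional fibers (being finite), the fiber-dimension formula $\dim_y Y = \dim_y Y_x + \dim_x X$ from \cite[Lemma~4.5.11]{ducros2018families} then yields $\dim T = \dim_{f(t)} S'$. The image $f(T)$ is an irreducible Zariski-closed subset of $S'$ by \cref{lemma:imageoffiniteisfinite}, and $T \to f(T)$ is finite surjective, giving $\dim f(T) = \dim T = \dim_{f(t)} S'$. When $S'$ is irreducible one has $\dim_{f(t)} S' = \dim S'$, so $f(T)$ becomes an irreducible closed subset of $S'$ of full dimension, forcing $f(T) = S'$.

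For the connected case this dimension bound is insufficient, so I will use universal openness. The statement being local on $S'$, and since preimages of affinoid domains under finite morphisms are affinoid, the setup reduces to $S = \mathcal{M}(A)$, $Z = \mathcal{M}(B)$, $S' = \mathcal{M}(A')$ with $A$ a Noetherian normal domain and $B$ a module-finite $A$-algebra that is a domain. At the scheme level the classical going-down theorem of Cohen--Seidenberg for integral extensions of normal domains, together with finiteness, implies that $\Spec(B) \to \Spec(A)$ is universally open, whence $\Spec(B \otimes_A A') \to \Spec(A')$ is Zariski-open. Connected components of the affinoid space $\mathcal{M}(R)$ correspond bijectively to primitive idempotents of $R$, as do those of $\Spec(R)$; in particular $S'$ connected is equivalent to $\Spec(A')$ connected. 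The image of a connected component of $\Spec(B \otimes_A A')$ in $\Spec(A')$ is then both Zariski-open (by openness) and Zariski-closed (by finiteness), and is non-empty, so it equals $\Spec(A')$ by connectedness. Translating back via idempotents, the corresponding connected component of $Z \times_S S'$ surjects onto $S'$.

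The main obstacle is the careful transfer between the Berkovich-analytic and scheme-theoretic settings: establishing the bijective correspondence of irreducible (resp. connected) components via minimal primes (resp. primitive idempotents) between $\mathcal{M}(R)$ and $\Spec(R)$ for an affinoid algebra $R$, and checking that surjectivity of a morphism of analytic spaces reduces to surjectivity of the corresponding morphism of schemes. The commutative-algebra input—going-down for integral extensions of normal domains and the resulting universal openness for finite morphisms—is classical.
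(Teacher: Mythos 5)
Your connected-component case is essentially the paper's own argument: reduce to affinoids, use normality and going-down to get (universal) openness of $\Spec(B)\rightarrow\Spec(A)$ (the paper cites \cite[Proposition~2.17~(3)]{voevodsky2000cycles} for this), and conclude that the image of a clopen component of $\Spec(B\otimes_AA')$ is open, closed and nonempty in the connected space $\Spec(A')$, hence everything. That part, including the transfer between $\mathcal{M}(R)$ and $\Spec(R)$, is sound.

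The irreducible case, however, has a genuine gap. The identity $\dim_yY=\dim_yY_x+\dim_xX$ from \cite[Lemma~4.5.11]{ducros2018families} is a statement about \emph{flat} morphisms (that is exactly how it is invoked elsewhere in this paper), and a finite surjective morphism from an integral space onto a normal space need not be flat (e.g. the analytification of $\mathbb{A}^2\rightarrow\mathbb{A}^2/\{\pm 1\}$). For a general finite morphism one only has the Chevalley-type inequality $\dim_yY\leq\dim_yY_x+\dim_xX$, which gives $\dim T\leq\dim S'$ --- the wrong direction for surjectivity. A structural warning sign is that your irreducible-case argument uses neither the normality of $S$ nor the integrality of $Z$, yet the lemma is false without them: take $S$ a nodal curve, $Z\rightarrow S$ its normalization and $S'=Z$; then $Z\times_SZ$ is the diagonal together with two isolated points, and those isolated irreducible components do not surject onto $S'$; at such a point the equality you invoke reads $0=0+1$. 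The repair is to run the irreducible case through the same universal-openness/equidimensionality input you already established for the connected case, exactly as the paper does: an irreducible component $C$ of $\Spec(B\otimes_AA')$ contains a nonempty open subset of $\Spec(B\otimes_AA')$, whose image is then a nonempty open subset of the irreducible space $\Spec(A')$ contained in the closed set $f(C)$; hence $f(C)=\Spec(A')$, and one transfers back to $\mathcal{M}(A')$ as before.
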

\begin{proof}
	This is in fact an algebraic result from \cite[Proposition~2.17]{voevodsky2000cycles}. We can assume that $S=\mathcal{M}(A), Z=\mathcal{M}(B)$ and $S'=\mathcal{M}(A')$ are affinoid. Since $B$ is finite over $A$, so $B'\coloneqq B\widehat{\otimes}_AA' = B\otimes_AA'$.
	
    Since $A$ is normal, we have that $\Spec(A)$ is geometrically unibranch in sense of \cite[0.23.2.1, 6.15.1]{grothendieck1967egaiv4}. Notice that $\Spec(A)$ is geometrically unibranch in sense of \cite[Definition~2.1.5]{voevodsky2000cycles} by \cite[18.8.15]{grothendieck1967egaiv4}. Notice that $\Spec(B)\rightarrow \Spec(A)$ is finite, surjective with $B$ integral, hence equidimensional of dimension $0$ in sense of \cite[Definition~2.1.2]{voevodsky2000cycles}. By \cite[Proposition~2.1.7~(3), (1)]{voevodsky2000cycles}, $\Spec(B)\rightarrow \Spec(A)$ is universally open in sense of \cite[Definition~2.1.4]{voevodsky2000cycles}. Then $\Spec(B')\rightarrow \Spec(A')$ is open. For every connected component $T=\mathcal{M}(C)$ of $\mathcal{M}(B')$, the morphism $\Spec(C)\rightarrow \Spec(A')$ is open. So $\mathcal{M}(C)\rightarrow \mathcal{M}(A')$ has image that is closed and Zariski-open, which is exactly $\mathcal{M}(A')$ since it is connected. 
	
	For the irreducible case, since $\Spec(B')\rightarrow \Spec(A')$ is equidimensional. Then the image of each irreducible component $\Spec(C)$ of $\Spec(B')$ is $\Spec(A')$. Since the image of $\mathcal{M}(C)$ is a Zariski-closed subspace of $\mathcal{M}(A')$, it must be $\mathcal{M}(A')$.
\end{proof}

\begin{lemma}
	Let $X, Y, Z$ be $K$-analytic spaces. Let $V\subset X\times Y$ and $W\subset Y\times Z$ be integral closed subspace which are finite and surjective over $X$ and $Y$ respectively. Assume that $Y$ is normal. Then $V\times Z$ and $X\times W$ intersect properly in $X\times Y\times Z$, and each irreducible component of the push-forward of the cycle $[V\times Z]\cdot[X\times W]$ on $X\times Z$ is finite and surjective over $X$.
\end{lemma}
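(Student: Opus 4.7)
The plan is to identify the set-theoretic intersection $(V\times Z)\cap(X\times W)$ inside $X\times Y\times Z$ with the analytic fiber product $V\times_Y W$ (via the diagonal of $Y$), and then to apply the two immediately preceding lemmas. Since $Y$ is normal and $V$ is integral, hence irreducible, applying Lemma \ref{lemma:componentoffiberproductissurjective} to the finite surjective morphism $W\to Y$ base-changed along $V\to Y$ shows that every irreducible component $T$ of $V\times_Y W$ is finite and surjective over $V$. Composing with $V\to X$, each such $T$ is finite and surjective over $X$; in particular $\dim_K T=\dim_K V=\dim_K X$.

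For the proper intersection, after reducing to affinoid domains via \cref{lemma:localequivalence} so that pure dimension may be assumed, the standard codimension count gives $\codim(V\times Z,X\times Y\times Z)=\dim_K Y$ and $\codim(X\times W,X\times Y\times Z)=\dim_K Z$, using $\dim_K V=\dim_K X$ and $\dim_K W=\dim_K Y$. Each component $T$ of the intersection has codimension exactly $\dim_K Y+\dim_K Z$, which matches the sum, so the two subspaces intersect properly.

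For the second assertion, let $p\colon X\times Y\times Z\to X\times Z$ denote the projection. The cycle $[V\times Z]\cdot[X\times W]$ is supported on $V\times_Y W$, so its components are found among the $T$ above. Each $T$, viewed as a closed subset of $V\times Z$ (which is itself fibered over $X$ via $V\times Z\to V\to X$), is finite and surjective over $X$. Hence Lemma \ref{lemma:imageoffiniteisfinite} applied with base $S=X$, source $V\times Z$, target $X\times Z$, and subset $T$, yields that $p(T)$ is an irreducible Zariski-closed subset of $X\times Z$ with $p(T)\to X$ finite and surjective. In particular $\dim_K p(T)=\dim_K T$, so the push-forward $p_*[T]=\deg(T/p(T))[p(T)]$ is nonzero, and every component of $p_*([V\times Z]\cdot[X\times W])$ is of the form $p(T)$, finite and surjective over $X$.

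The main obstacle I anticipate is verifying the hypotheses of Lemma \ref{lemma:componentoffiberproductissurjective} outside of a purely affinoid setting, together with ensuring that the analytic fiber product $V\times_Y W$ really describes the cycle-theoretic intersection (i.e.\ that no extraneous components appear and the multiplicities match); both points are addressed by passing to affinoid domains via \cref{lemma:localequivalence} and invoking the compatibility of analytic and scheme-theoretic intersection multiplicities established earlier, together with the fact that $W\to Y$ being finite lets the analytic fiber product be computed affinoid-locally by the ordinary tensor product.
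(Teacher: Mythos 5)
Your proposal is correct and follows essentially the same route as the paper: identify the intersection with $V\times_YW$ via the diagonal of $Y$, apply \cref{lemma:componentoffiberproductissurjective} (with $S=Y$ normal, base-changing $W\to Y$ along $V\to Y$) to see each component is finite surjective over $V$ and hence over $X$, deduce proper intersection from the dimension count, and conclude with \cref{lemma:imageoffiniteisfinite} for the push-forward. The extra care you take with the codimension bookkeeping and with matching the cycle-theoretic intersection to the fiber product is a slight elaboration of what the paper leaves implicit, but the argument is the same.
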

\begin{proof}
	Notice that $V\times_YW\hookrightarrow X\times Y\times_YY\times Z\simeq X\times Y\times Z$ is the intersection of $V\times Z$ and $X\times W$ in $X\times Y\times Z$. Then we have the following diagram
	\[\xymatrix{V\times_YW\ar[r]\ar[d] & W\ar[r]\ar[d] & Z\\
		V\ar[r]\ar[d]& Y&\\
		X}.\]
	Since $W\to Y$ is finite and surjective with $Y$ normal, by \cref{lemma:componentoffiberproductissurjective}, each component of $V\times_YW$ is finite and surjective over $V$, so it is also finite and surjective over $X$, and it is of dimension $\dim X$. This implies that $V\times Z$ and $X\times W$ intersect properly in $X\times Y\times Z$. By \cref{lemma:imageoffiniteisfinite}, the image of each irreducible component of $V\times_YW$ in $X\times Z$ is finite and surjective over $X$ (we take $S=X$, $T=V\times_YW$, $T'=X\times Z$ in \cref{lemma:imageoffiniteisfinite}).
\end{proof}

\begin{definition}
	Let $\mathrm{Cor}_K$ be the category defined as follows:
	\begin{itemize}
		\item Objects: the quasi-smooth $K$-analytic spaces;
		\item Morphisms: the finite correspondences $\Cor_K(X,Y)$.
	\end{itemize}
	Given $V\in \Cor_K(X,Y), W\in \Cor_K(Y,Z)$, we define $W\circ V$ as the push-forward of $[V\times Z]\cdot[X\times W]$ on $X\times Z$, which is an element in $\Cor_K(X,Z)$. 
\end{definition}

We need to check that $\Cor_K$ is a category, i.e. the composition is associative and has an identity. This is given by following proposition. 

\begin{proposition} \label{prop:category of finite correspondences}
	Let $X,Y,Z$ be quasi-smooth $K$-analytic spaces, and $\alpha\in \Cor_K(X,Y), \beta\in \Cor_K(Y,Z)$.
	\begin{enumerate}
		\item [(1)] If $\gamma\in \Cor_K(Z,Z')$, then $\gamma\circ(\beta\circ\alpha) = (\gamma\circ\beta)\circ\alpha$.
		\item[(2)] The following statements hold. \begin{enumerate} 
			\item [(2.1)] If $\beta=\Gamma_g$, then $\beta\circ\alpha=(\id_X\times g)_*(\alpha)$.
			\item [(2.2)] If $\alpha=\Gamma_f$, then $\beta\circ\alpha=(f\times \id_Y)_*(\beta)$.
			\item [(2.3)] If $\alpha=\Gamma_f$, $\beta=\Gamma_g$, then $\beta\circ\alpha=\Gamma_{gf}$.
		\end{enumerate}
		In particular, the diagonal $\Delta_X$ is the identity for a quasi-smooth $K$-analytic space $X$.
	\end{enumerate}
\end{proposition}
\begin{proof}
This is from \cref{prop:flatbasechangeofcycles} and \cref{theorem:projectionformula}, see the proof of \cite[Proposition~16.1.1~(a)~(c)]{fulton1998intersection} for the details.
\end{proof}
\begin{remark}
The category $\mathrm{QSm}_K$ of quasi-smooth $K$-analytic spaces is subcategory of $\Cor_K$. 
\end{remark}

\begin{remark} \label{equivalence of category of finite correspondences}
Our definition of $\mathrm{Cor}_K$ coincides with \cite[DEFINITION~2.2.29]{ayoub2015motifs} (notice that rigid spaces in \cite{ayoub2015motifs} are assumed to be separated and to have an admissible covering by countable affinoid domains (in particular quasi-paracompact as G-topological spaces) since \cite[\S~1.2]{ayoub2015motifs}). Indeed, we denote $\mathbf{RigCor}(K)$ the category defined in \cite[DEFINITION~2.2.29]{ayoub2015motifs}, $\mathrm{SmRig}_{/K}$ the category of smooth rigid spaces over $K$ with \'etale topology, and $\mathcal{C}$ the fully faithful subcategory of $\mathrm{Cor}_K$ having the same objects as $\mathbf{RigCor}(K)$. Any $Y\in\Ob(\mathcal{C})$ defines an abelian sheaf on $\mathrm{SmRig}_{/K}$, denoted by $\Z_{\mathrm{tr}}(Y)$, see \cite[\S~2.2.3]{ayoub2015motifs}. Notice that for any $X,Y\in\Ob(\mathcal{C})$, $\Hom_{\mathbf{RigCor}(K)}(X,Y)= \Z_{\mathrm{tr}}(Y)(X)$, see \cite[DEFINITION~2.2.29]{ayoub2015motifs}, and by \cite[PROPOSITION~2.2.28]{ayoub2015motifs}, $\Z_{\mathrm{tr}}(Y)(X)= \Hom_{\mathcal{S}h(\mathrm{SmRig}_{/K})}(\Z_{\mathrm{tr}}(X),\Z_{\mathrm{tr}}(Y))$, where $\mathcal{S}h(\mathrm{SmRig}_{/K})$ the category of abelian sheaves on $\mathrm{SmRig}_{/K}$. It suffices to show that the canonical isomorphism 
$\Hom_{\mathbf{RigCor}(K)}(X,Y)=\Z_{\mathrm{tr}}(Y)(X)\overset{F}{\simeq} \Cor_K(X,Y)$ in \cite[PROPOSITION~2.2.35]{ayoub2015motifs} defines a functor $\mathbf{RigCor}(K)\to\mathcal{C}$, i.e. $F(\mathbbm{1}_X)=\Delta_X$, $F(g\circ f)=F(g)\circ F(f)$. The first equality is easily deduced from the construction of $F$ in the proof of \cite[PROPOSITION~2.2.35]{ayoub2015motifs}. For the second one, let $X,Y, Z\in\Ob(\mathcal{C})$ and $g\colon Y\to Z$ in $\mathbf{RigCor}(K)$, we should show the following diagram commutes
\[\xymatrix{\Z_{\mathrm{tr}}(Y)(X)\ar[r]^{g}\ar[d]_-{F}^-{\rotatebox{90}{$\sim$}}&\Z_{\mathrm{tr}}(Z)(X)\ar[d]^-{F}_-{\rotatebox{90}{$\sim$}}\\
\Cor_K(X,Y)\ar[r]^-{F(g)}&\Cor_K(X,Z)}.\]
Since $\Z_{\mathrm{tr}}(Y)\overset{F}{\simeq} \Cor_K(-,Y)$ is an isomorphism of sheaves on $\mathrm{SmRig}_{/K}$, see the proof of \cite[PROPOSITION~2.2.35]{ayoub2015motifs}, we can assume that $X$ is affinoid. In this case, we have $\Z_{\mathrm{tr}}(Y)(X)$ is the free abelian group generated by $\Hom_{\mathrm{SmRig}_{/K}}(X,Y)$, and $F(f)=\Gamma_f$ for $f\in\Hom_{\mathrm{SmRig}_{/K}}(X,Y)$. If $Y$ is affinoid, then the diagram commutes by \cref{prop:category of finite correspondences}~(2.3), i.e. $g=F^{-1}\circ F(g)\circ F$ in $\Hom_{\mathcal{S}h(\mathrm{SmRig}_{/K})}(\Z_{\mathrm{tr}}(Y),\Z_{\mathrm{tr}}(Z)) = \Z_{\mathrm{tr}}(Z)(Y)$. For general case, we cover $Y$ with admissible covering $\{Y_i\}_{i\in I}$ by affinoid domains, since $F$ is canonical, we have that $g=F^{-1}\circ F(g)\circ F$ holds on $Y_i$. Then the $g=F^{-1}\circ F(g)\circ F$ holds in $\Z_{\mathrm{tr}}(Z)(Y)$ since $\Z_{\mathrm{tr}}(Z)$ is a sheaf. This proves the statement.
\end{remark}

Following the idea in \cite{bloch1986algebraic}, we can define higher Chow groups $\CH^n(X,s)$ for quasi-smooth $K$-analytic spaces. By GAGA principle, such definition will coincide with the one for proper varieties. On the other hand, the higher Chow groups is also defined in \cite[Introduction g\'en\'erale]{ayoub2015motifs} using motives of analytic spaces. It is natural to expect there is a close connection between these two and higher Chow groups have similar properties as in the case of algebraic varieties. 


	



	\section*{Acknowledgements}
	The author would like to thank his host professor, Yigeng Zhao for his encouragement, support and valuable suggestions. He would also like to thank Antoine Ducros, Walter Gubler and Michael Temkin for their patience and answering questions during his study of Berkovich spaces. Also the author would like to thank the referee for carefully reading this paper, pointing out some mistakes and useful suggestions. This research is supported by postdoctoral research grant.

\end{document}